\def\proj{\mbox{proj}}
\def\prox{\mbox{prox}}
\newcommand{\beq}{\begin{equation}}
\newcommand{\eeq}{\end{equation}}
\newcommand{\st}{{\rm s.t.}}
\newcommand{\cC}{{\mbox{ $\mathcal{C}$}}}
\newcommand{\cK}{{\mbox{ $\mathcal{K}$}}}
\def\pn {\par\smallskip\noindent}
\newcommand{\hx}{\hat{x}}
\newcommand{\hy}{\hat{y}}
\newcommand{\tx}{\tilde{x}}
\newcommand{\ty}{\tilde{y}}
\title{Convergence Analysis of Alternating Direction Method of Multipliers for a Family of Nonconvex Problems}
\author{{Mingyi Hong, Zhi-Quan Luo and Meisam Razaviyayn}\thanks{A conference version of the paper has been presented in 2015 International Conference on Acoustics Speech and Signal Processing (ICASSP) \cite{Hong2015icassp}. \newline
M. Hong is with the Department of Industrial and Manufacturing Systems Engineering, Iowa State University, Ames, IA 50011, USA. Email: \texttt{mingyi@iastate.edu}.\newline Z.-Q. Luo is with the Chinese University of Hong Kong, Shenzhen, China and Department of Electrical and Computer Engineering, University of Minnesota, Minneapolis, MN 55455, USA. Email: \texttt{luozq@cuhk.edu.cn} and \texttt{luozq@umn.edu}. \newline
M. Razaviyayn is with the Department of Electrical Engineering, Stanford University. Email: \texttt{meisamr@stanford.edu}\newline
M. Hong is supported by the National Science Foundation (NSF),
grant CCF-1526078, and by the Air Force Office of Scientific Research (AFOSR), grant 15RT0767. Z.-Q. Luo is supported by the National Science Foundation (NSF), grant CCF-1526434.}}
\begin{document}

\maketitle

\begin{abstract}
{\color{black}The alternating direction method of multipliers (ADMM) is widely used to solve large-scale linearly constrained optimization problems, convex or nonconvex, in many engineering fields. However there is a general lack of theoretical understanding of the algorithm when the objective function is nonconvex. In this paper we analyze the convergence of the ADMM for solving certain nonconvex {\it consensus} and {\it sharing} problems. We show that the classical ADMM converges to the set of stationary solutions, provided that the penalty parameter in the augmented Lagrangian is chosen to be sufficiently large. For the sharing problems, we show that the ADMM is convergent regardless of the number of variable blocks. Our analysis does not impose any assumptions on the iterates generated by the algorithm, and is broadly applicable to many ADMM variants involving proximal update rules and various flexible block selection rules.  }
\end{abstract}

%\vspace*{\fill}

%\noindent {\bf KEY WORDS:} ADMM, nonconvex optimization, linear constraints
%\pn
\noindent {\bf AMS(MOS) Subject Classifications:}  49, 90.

%%SO MUCH FOR THE TITLE PAGE

\section{\color{black}\bf Introduction} \label{sub:intro}
Consider the following linearly constrained (possibly nonsmooth
or/and nonconvex) problem with $K$ blocks of variables $\{x_k\}_{k=1}^{K}$:
\begin{align}
\begin{split}
\min&\quad f(x):=\sum_{k=1}^{K}g_k(x_k)+\ell(x_1, \cdots, x_K)\label{problem:Original}\\
\st&\quad \sum_{k=1}^{K}A_k x_k=q, \; x_k\in X_k, \ \forall~k=1,\cdots, K
\end{split}
\end{align}
where $A_k\in \mathbb{R}^{M\times N_k}$ and $q\in\mathbb{R}^{M}$; $X_k\subset\mathbb{R}^{N_k}$ is a closed convex set; $\ell(\cdot)$ is a smooth (possibly nonconvex) function; each $g_k(\cdot)$ can be either a smooth function, or a convex nonsmooth function. Let us define $A: =[A_1,\cdots, A_k]$.
The augmented Lagrangian for problem \eqref{problem:Original} is given by
\begin{align}
L(x; y)=\sum_{k=1}^{K} g_k(x_k)+\ell(x_1,\cdots, x_K)+\langle y,
q-Ax\rangle+\frac{\rho}{2}\|q-Ax\|^2 \label{AugmentedL},
\end{align}
where $\rho>0$ is a constant representing the primal penalty parameter.

To solve problem \eqref{problem:Original}, let us consider a popular algorithm called the alternating direction method of multipliers (ADMM), whose steps are given below:
\begin{center}
\fbox{
\begin{minipage}{4.9in}
\smallskip
\centerline{\bf Algorithm 0. ADMM for Problem \eqref{problem:Original}}
\smallskip
At each iteration $t+1$, update the primal variables:
\begin{align}
&x^{t+1}_k=\arg\min_{x_k\in X_k} L(x^{t+1}_1,\cdots, x^{t+1}_{k-1}, x_k, x^{t}_{k+1},\cdots, x^{t}_K; y^{t}), \ \forall~k=1,\cdots, K. \label{eq:basic_x_update}
\end{align}
Update the dual variable:
\begin{align}
&y^{t+1}= y^{t}+\rho(q-A x^{t+1}).\label{eq:basic_dual_update}
\end{align}

\end{minipage}
}
\end{center}

The ADMM algorithm was originally introduced in early 1970s \cite{ADMMGlowinskiMorroco,ADMMGabbayMercier}, and has since been studied extensively \cite{Eckstein89, EcksteinBertsekas1992, Glow84, bertsekas97}. Recently it has become widely popular in modern big data related problems arising in machine learning, computer vision, signal processing, networking and so on; see  \cite{BoydADMM, Yin:2008:BIA:1658318.1658320, Yang09TV, zhang11primaldual,Scheinberg10inverse, Schizas08, Feng14, liao14sdn} and the references therein. In practice, the algorithm often exhibits faster convergence than traditional primal-dual type algorithms such as the dual ascent algorithm \cite{bertsekas99, boyd04, Nedic09} or the method of multipliers \cite{bertsekas82}. It is also particularly suitable for parallel implementation \cite{BoydADMM}.

There is a vast literature that applies the ADMM to various problems in the form of \eqref{problem:Original}. Unfortunately, theoretical understanding of the algorithm is still fairly limited. For example, most of its convergence analysis is done for certain special form of problem \eqref{problem:Original} --- the {\it two-block convex separable} problems, where $K=2$, $\ell=0$ and $g_1$, $g_2$ are both convex. In this case, ADMM is known to converge under very mild conditions; see \cite{bertsekas97} and \cite{BoydADMM}. Under the same conditions, several recent works \cite{HeYuan2012, Monteiro13,Davis14} have shown that the ADMM converges with the sublinear rate of $\mathcal{O}(\frac{1}{t})$ or $o(\frac{1}{t})$, and it converges with a rate $\mathcal{O}(\frac{1}{t^2})$ when properly accelerated \cite{goldstein12, goldfarb12}. Reference \cite{ADMMlinearYin} has shown that the ADMM converges linearly when the objective function as well as the constraints satisfy certain additional assumptions. For the {\it multi-block} separable convex problems where $K\ge 3$, it is known that the original ADMM can diverge for certain pathological problems \cite{chen13}. Therefore, most research effort in this direction has been focused on either analyzing problems with additional conditions, or showing convergence for variants of the ADMM; see for example \cite{HongLuo2012ADMM, he:alternating12, chen13, hong13BSUMM, Wang13, han12admm, Deng14Parallel,HeXuYuan2013,lin_ma_zhang2014}. It is worth mentioning that when the objective function is not separable across the variables (e.g., the coupling function $\ell(\cdot)$ appears in the objective), the convergence of the ADMM is still open, even in the case where $K=2$ and $f(\cdot)$ is convex. Recent works of \cite{hong13BSUMM, hong13bcdmm_icassp} have shown that when problem \eqref{problem:Original} is convex but not necessarily separable, and when certain error bound condition is satisfied, then the ADMM iteration converges to the set of primal-dual optimal solutions, provided that the dual stepsize decreases in time. Another recent work in this direction can be found in \cite{Gao15:nonseparate}.

Unlike the convex case, for which the behavior of ADMM has been investigated quite extensively, when the objective becomes nonconvex, the convergence issue of ADMM remains largely open.
Nevertheless, it has been observed by many researchers that the ADMM works extremely well for various applications involving nonconvex objectives, such as the nonnegative matrix factorization \cite{zhang10ADMM_NMF, sun14}, phase retrieval \cite{wen12}, distributed matrix factorization \cite{zhang14}, distributed clustering \cite{Forero11}, sparse zero variance discriminant analysis \cite{Ames13LDA}, polynomial optimization\cite{jiang13ADMM}, tensor decomposition \cite{Liavas14}, matrix separation \cite{Shen:2014}, matrix completion \cite{xu11admm_matrix}, asset allocation \cite{Wen13Risk}, sparse feedback control \cite{lin13} and so on. However, to the best of our knowledge, existing convergence analysis of ADMM for nonconvex problems is very limited ---  all known global convergence analysis needs to impose uncheckable conditions on the sequence generated by the algorithm. For example, references \cite{jiang13ADMM, Shen:2014, xu11admm_matrix, Wen13Risk} show global convergence of the ADMM to the set of stationary solutions for their respective nonconvex problems, by making the key assumptions that the limit points do exist, and that the successive differences of the iterates (both primal and dual) converge to zero. However such assumption is nonstandard and overly restrictive. It is not clear whether the same convergence result can be claimed without making assumptions on the iterates. Reference \cite{zhang10ADMM_QP} analyzes a family of splitting algorithms (which includes the ADMM as a special case) for certain nonconvex quadratic optimization problem, and shows that they converge to the stationary solution when certain condition on the dual stepsize is met. { We note that there has been many recent works proposing new algorithms to solve nonconvex and nonsmooth problems, for example \cite{Razaviyayn12SUM,Ghadimi15acc:nonconvex,Ghadimi14mini,scutari13decomposition,Bolte14}. However, these works do not deal with nonconvex problems with linearly coupling constraints, and their analysis does not directly apply to the ADMM-type methods. }

%Apparently there is a huge gap between the practical performance of the ADMM for nonconvex problems and the theoretical justification for doing so. It is the aim of this work to close this gap, at least to some extent.
The aim of this paper is to provide some theoretical justification on the good performance of the ADMM for nonconvex problems. Specifically, we establish the convergence of ADMM for certain types of nonconvex problems including the consensus and sharing problems without making any assumptions on the iterates. Our analysis shows that, as long as the objective functions $g_k$'s and $\ell$ satisfy certain regularity conditions, and the penalty parameter $\rho$ is chosen large enough (with computable bounds), then the iterates generated by the ADMM is guaranteed to converge to the set of stationary solutions. It should be noted that our analysis covers many variants of the ADMM including per-block proximal update and flexible block selection. An interesting consequence of our analysis is that for a particular reformulation of the sharing problem, the {\it multi-block} ADMM algorithm converges, regardless of the convexity of the objective function. Finally, to facilitate possible applications to other nonconvex problems, we highlight the main proof steps in our analysis framework that can guarantee the global convergence of the ADMM iterates \eqref{eq:basic_x_update}--\eqref{eq:basic_dual_update} to the set of stationary solutions.

%The aim of this paper is to provide some theoretical justification on the good performance of the ADMM for nonconvex problems. Specifically, we analyze the convergence of ADMM for certain types of nonconvex problems including the consensus and sharing problems, and show that the ADMM converges without any assumptions on the iterates. That is, as long as the objective functions $g_k$'s and $\ell$ satisfy certain regularity conditions, and the stepsize $\rho$ is chosen large enough (with computable bounds), then the algorithm is guaranteed to converge to the set of stationary solutions. In fact, our convergence analysis covers many variants of the ADMM including per-block proximal update and flexible block selection. An interesting consequence of our analysis is that for a particular reformulation of the sharing problem, the {\it multi-block} ADMM algorithm converges, regardless of the convexity of the objective function. We conclude the paper by providing an analysis framework that guarantees the global convergence of the ADMM iteration \eqref{eq:basic_x_update}--\eqref{eq:basic_dual_update} to the set of stationary solutions of a nonconvex problem.

\section{The Nonconvex Consensus Problem}

\subsection{The Basic Problem}\label{sec:consensus}
Consider the following nonconvex global consensus problem with regularization
\begin{align}\label{eq:consensus}
\begin{split}
\min&\quad f(x):=\sum_{k=1}^{K}g_k(x)+h(x)\\
\st&\quad x\in X
\end{split}
\end{align}
where $g_k$'s are a set of smooth, possibly nonconvex functions, while $h(x)$ is a convex nonsmooth regularization term.  This problem is related to  the convex global consensus problem discussed heavily in \cite[Section 7]{BoydADMM}, but with the important difference that $g_k$'s can be nonconvex.

In many practical applications, $g_k$'s need to be handled by a single agent, such as a thread or a processor. This motivates the following consensus formulation. Let us introduce a set of new variables $\{x_k\}_{k=0}^{K}$, and transform problem \eqref{eq:consensus} equivalently to the following linearly constrained problem
\begin{align}\label{eq:consensus:admm}
\begin{split}
\min&\quad \sum_{k=1}^{K}g_k(x_k)+h(x_0)\\
\st&\quad x_k=x_0,\;\forall~k=1,\cdots,K, \quad x_0\in X.
\end{split}
\end{align}
{We note that after reformulation, the problem dimension is increased by $K$ due to the introduction of auxiliary variables $\{x_1,\cdots, x_K\}$. Consequently, solving the reformulated problem \eqref{eq:consensus:admm} distributedly
may not be as efficient (in terms of total number of iterations required) as applying the centralized algorithms \cite{Razaviyayn12SUM,Ghadimi15acc:nonconvex,Ghadimi14mini,scutari13decomposition,Bolte14} directly to the original problem \eqref{eq:consensus}.  Nonetheless, a major benefit of solving the reformulated problem \eqref{eq:consensus:admm} is the flexibility of allowing each distributed agent to handle a single {\it local} variable $x_k$ and a {\it local} function $g_k$.}

The augmented Lagrangian function is given by
\begin{align}\label{eq:lagrangian:consensus}
L(\{x_k\}, x_0; y)=\sum_{k=1}^{K}g_k(x_k)+h(x_0)+\sum_{k=1}^{K}\langle y_k, x_k-x_0\rangle+\sum_{k=1}^{K}\frac{\rho_k}{2}\|x_k-x_0\|^2.
\end{align}
Note that this augmented Lagrangian is slightly {different from} the one expressed in \eqref{AugmentedL}, as we have used a set of different penalization parameters $\{\rho_k\}$, one for each equality constraint $x_k=x_0$. We note that there can be many other variants of the basic consensus problem, such as the {\it general form consensus optimization}, the {\it sharing} problem and so on. We will discuss some of those variants in the later sections.

\subsection{The ADMM Algorithm for Nonconvex Consensus}\label{sub:consensus_vanilla}

The problem \eqref{eq:consensus:admm} can be solved distributedly by applying the classical ADMM. The details are given in the table below.
\begin{center}
\fbox{
\begin{minipage}{4.9in}
\smallskip
\centerline{\bf Algorithm 1. The Classical ADMM for Problem \eqref{eq:consensus:admm}}
\smallskip
At each iteration $t+1$, compute:
\begin{equation}
{x_0^{t+1}={\rm arg}\!\min_{x_0\in X}L(\{x^{t}_{ k}\}, x_0;  y^{t}).}
\end{equation}
Each node $k$ computes $x_k$ by solving:
\begin{align}
%\nabla f_k(x_k)+y^{t(k)}+\rho_k (x_k-x^{t+1})=0
x^{t+1}_k=\arg\min_{x_k} g_k(x_k)+\langle y^{t}_k, x_k-x_0^{t+1}\rangle+\frac{\rho_k}{2}\|x_k-x_0^{t+1}\|^2.
\end{align}
Each node $k$ updates the dual variable:
\begin{align}
y^{t+1}_k=y_{k}^{t}+\rho_k\left(x^{t+1}_k-x_0^{t+1}\right).
\end{align}
\end{minipage}
}
\end{center}
%we have used the proximity operator, defined below. Let $h:\dom(h)\mapsto \mathbb{R}$ be a (possibly nonsmooth) convex function. For every $\vx\in\dom(h)$, the \emph{proximity operator}
%of $h$ is defined as \cite[Section 31]{Rockafellar70}
%{\[\color{blue}\prox^{\beta}_{h}(\vx)={\argmin_{u}}\;\;h(u)+\frac{\beta}{2}\|\vx-u\|^2.
%\]} %\end{equation}
%When $h=\iota(X)$, the indicator function of a closed convex set $X$, the resulting proximity operator reduces to the usual projection operator $\proj_{X}$.
In the $x_0$ update step, if the nonsmooth penalization $h(\cdot)$ does not appear in the objective, then this step can be written as
\begin{align}
x_0^{t+1}={\rm arg}\!\min_{x_0\in X}L(\{x_{ k}^{t}\}, x_0;  y^{t})=\proj_{X}\left[\frac{\sum_{k=1}^{K}\rho_k x_{k}^{t}+\sum_{k=1}^{K}y_{k}^{t}}{\sum_{k=1}^{K}\rho_k}\right].
\end{align}

Note that the above algorithm has the exact form as the classical ADMM described in \cite{BoydADMM}, where the variable $x_0$ is taken as the first block of primal variable, and the collection $\{x_k\}_{k=1}^{K}$ as the second block. The two primal blocks are updated in a sequential (i.e., Gauss-Seidel) manner, followed by an inexact dual ascent step.

{In what follows, we} consider a more general version of ADMM which includes Algorithm 1 as a special case. In particular, we propose a  {\it flexible} ADMM algorithm in which there is a greater flexibility in choosing the order of the update of both the primal and the dual variables. Specifically, we consider the following two types of variable block update order rules: let $k=0,2,...,K$ be the indices for the primal variable blocks $x_0, x_1,x_2,...,x_K$, and let $\cC^{t}\subseteq\{0,1, \cdots, K\}$ denote the set of variables updated in iteration $t$, then
\begin{enumerate}
\item {\it Randomized update rule}: At each iteration $t+1$, a variable block $k$
is chosen randomly with probability $p^{t+1}_k$,
\begin{align}
{\rm Pr}\left(k\in\cC^{t+1}\mid x_0^t, y^t, \{x_{k}^{t}\}\right)=p^{t+1}_k\ge p_{\rm min}>0.
\end{align}
%where $\{p^{t+1}_k\}_{k=0}^{K}$ further satisfies $\sum_{k=0}^{K}p^{t+1}_k =1$, for all $t$.
\item {\it Essentially cyclic update rule}: There exists a given period $T\ge 1$ during which each index is updated at least once. More specifically, at iteration $t$, update all the variables in an index set $\cC^t$ whereby
\begin{align}
\bigcup_{i=1}^{T}\cC^{t+i}=\{0,1,\cdots,K\}, \; \forall~t.
\end{align}
We call this update rule a {\it period-$T$} essentially cyclic update rule.
\end{enumerate}

\begin{center}
\fbox{
\begin{minipage}{4.9in}
\smallskip
\centerline{\bf Algorithm 2. The Flexible ADMM for Problem \eqref{eq:consensus:admm}}
\smallskip
{Let $\cC^{1}=\{0,\cdots, K\}$, $t=0, 1, \cdots$.

At each iteration $t+1$, do:\\

{\bf If} $t+1\ge 2$, pick an index set $\cC^{t+1}\subseteq\{0,\cdots,K\}$}.\\

{\bf If} $0\in \cC^{t+1}$, compute:
\begin{equation}\label{eq:x_update}
{x_0^{t+1}={\rm arg}\!\min_{x\in X}L(\{x_{k}^{t}\}, x_0;  y^{t}).}%=\prox^{\sum_k\rho_k}_{\iota(X)+h}\left[\frac{\sum_{k=1}^{K}\rho_k x_{k}^{t}+\sum_{k=1}^{K}y_{k}^{t}}{\sum_{k=1}^{K}\rho_k}\right].}
\end{equation}
{\bf Else} $x_0^{t+1}=x_0^{t}$. \\
{\bf If} $k\ne 0$ and $k\in\cC^{t+1}$, node $k$ computes $x_k$ by solving:
\begin{align}\label{eq:x_k_update}
x^{t+1}_k=\arg\min_{x_k} g_k(x_k)+\langle y^{t}_k, x_k-x_0^{t+1}\rangle+\frac{\rho_k}{2}\|x_k-x_0^{t+1}\|^2.
\end{align}
\; \; Update the dual variable:
\begin{align}\label{eq:y_update}
y^{t+1}_k=y_{k}^{t}+\rho_k\left(x^{t+1}_k-x_0^{t+1}\right).
\end{align}
{\bf Else} $x^{t+1}_k=x^{t}_k$, $y^{t+1}_k=y^{t}_k$.
\end{minipage}
}
\end{center}

{We note that the randomized version of Algorithm 2 is similar to that of the convex consensus algorithms studied in \cite{Wei13} and \cite{chang14}. It is also related to the {\it randomized BSUM-M} algorithm studied in  \cite{hong13BSUMM}. The difference with the latter is that in the randomized BSUM-M, the dual variable is viewed as an additional block that can be randomly picked (independent of the way that the primal blocks are picked), whereas in Algorithm 2, the dual variable $y_k$ is always updated whenever the corresponding primal variable $x_k$ is updated. To the best of our knowledge, the {period-$T$} essentially cyclic update rule is a new variant of the ADMM. }

{Notice that Algorithm 1 is simply the period-1 essentially cyclic rule, which is a special case of Algorithm 2. Therefore we will focus on analyzing Algorithm 2.} To this end, we make the following assumption.

\pn {\bf Assumption A.}
\begin{itemize}
\item[A1.] There exists a positive constant $L_k>0$ such that $$\|\nabla_k g_k(x_k)-\nabla_k g_k(z_k)\|\le L_k \|x_k-z_k\|, \; \forall~x_k,z_k, \; k=1,\cdots, K.$$
    Moreover, $h$ is convex (possible nonsmooth); $X$ is a closed convex set.
\item[A2.] For all $k$, the penalty parameter $\rho_k$ is chosen large enough such that:
\begin{enumerate}
\item {For all} $k$, the $x_k$ subproblem \eqref{eq:x_k_update} is strongly convex with modulus $\gamma_k(\rho_k)$;
\item For all $k$, $\rho_k\gamma_k(\rho_k)> 2L^2_k$ and $\rho_k\ge L_k$.
\end{enumerate}
\item[A3.] $f(x)$ is bounded from below over  $X$, that is,
{$$\underline{f}:=\min_{x\in X}f(x)>-\infty.$$}
\end{itemize}
We have the following remarks regarding to the assumptions made above.
\begin{itemize}
\item As $\rho_k$ inceases, the subproblem \eqref{eq:x_k_update} will be eventually strongly convex with respect to $x_k$. The corresponding strong convexity modulus $\gamma_k(\rho_k)$ is a monotonic increasing function of $\rho_k$. %{[[removed the disucssion on bounded Hessian.]]} %This requirement, though, limits $g_k$ to the family of nonconvex functions that have bounded Hessian in its effective domain (e.g., the quadratic functions).  This assumption is needed in this section to make the iterates of both Algorithm 1 and 2 well-defined. In the latter sections this assumption will be relaxed.
\item Whenever {$g_k(\cdot)$} is nonconvex (therefore $\rho_k>\gamma_k(\rho_k)$), the condition $\rho_k\gamma_k(\rho_k)\ge 2L^2_k$ implies $\rho_k\ge L_k$.
\item By construction, $L(\{x_k\}, x_0; y)$ is also strongly convex with respect to $x_0$, with a modulus $\gamma:=\sum_{k=1}^{K}\rho_k$.
\item Assumption A makes no assumption on the {\it iterates} generated by the algorithm. This is in contrast to the existing analysis of the nonconvex ADMM algorithms  \cite{zhang10ADMM_NMF, xu11admm_matrix, jiang13ADMM}.
\end{itemize}

Now we begin to analyze Algorithm 2. We first make several definitions. Let ${t(k)}$ (resp.\ {$t(0)$}) denote the latest iteration index that $x_k$ (resp.\ $x_0$) is updated before iteration $t+1$, i.e.,{
\begin{align}
\begin{split}
{t(k)}&=\max\; \{r\mid {r\le t, k\in \cC^r}\}, \; k=1,\cdots,K,\label{eq:def_tk}\\
{t(0)}&=\max\; \{r\mid {r\le t, 0\in\cC^r}\}.
\end{split}
\end{align}}
%An equivalent definition is
%{\color{red}
%\begin{align}
%{t(k)}&=\arg\max_{r\le t+1}\; \{k\in\cC^{r}\}, \; k=1,\cdots,K,\nonumber\\
%{t(0)}&=\arg\max_{r\le t+1}\; \{0\in\cC^{r}\}.\nonumber
%\end{align}}
{This definition implies that $x_k^{t}=x_k^{t(k)}$ for all $k=0, \cdots, K$.}

{Also define new vectors $\hx_0^{t+1}$, $\{\hx^{t+1}_k\}$,  $\hy^{t+1}$  and { $\{\tx^{t+1}_k\}$,  $\ty^{t+1}$}  by{
\begin{subequations}
\begin{align}
\hx_0^{t+1}&={\rm arg}\!\min_{x_0\in X}L(\{x_{k}^{t}\}, x_0; y^t),\label{eq:hx}\\
\hx_k^{t+1}&=\arg\min_{x_k} g_k(x_k)+\langle y^{t}_k, x_k-\hx_0^{t+1}\rangle+\frac{\rho_k}{2}\|x_k-\hx_0^{t+1}\|^2, \; \forall~k\label{eq:hxk}\\
\hy^{t+1}_k&=y_{k}^{t}+\rho_k\left(\hx^{t+1}_k-\hx_0^{t+1}\right).\label{eq:hy}\\
\tx_k^{t+1}&=\arg\min_{x_k} g_k(x_k)+\langle y^{t}_k, x_k-x_0^{t}\rangle+\frac{\rho_k}{2}\|x_k-x_0^{t}\|^2, \; \forall~k\label{eq:txk}\\
\ty^{t+1}_k&=y_{k}^{t}+\rho_k\left(\tx^{t+1}_k-x_0^{t}\right).\label{eq:ty}
\end{align}
\end{subequations}
In words, $(\hx_0^{t+1},\{\hx^{t+1}_k\}, \hy^{t+1})$ is a ``virtual" iterate assuming that all variables are updated at iteration $t+1$. $\{\tx^{t+1}_k\}, \ty^{t+1}$ is a ``virtual" iterate for the case where $x_0$ is not updated but the rest of variables are updated.
}}

We first show that the size of the successive difference of the dual variables can be bounded above by that of the primal variables.

\begin{lemma}\label{lemma:y1}
Suppose Assumption A holds. Then for Algorithm 2 with either randomized or essentially cyclic update rule, the following are true
\begin{subequations}
\begin{align}
&L^2_k\|x_k^{t+1}-x_{k}^{t}\|^2\ge \|y^{t+1}_k-y_{k}^{t}\|^2,  \; \forall~k=1,\cdots, K, \label{eq:y_difference1}\\
%&L^2_k\|x_k^{t+1}-x^{t(k)}_{k}\|^2\ge \|y^{t+1}_k-y^{t(k)}_{k}\|^2,  \; \forall~k=1,\cdots, K,\label{eq:y_difference0}\\
&{L^2_k\|\hx_k^{t+1}-x_{k}^{t}\|^2\ge \|\hy^{t+1}_k-y_{k}^{t}\|^2,  \; \forall~k=1,\cdots, K.}\label{eq:y_difference_rand1}\\
&{L^2_k\|\tx_k^{t+1}-x_{k}^{t}\|^2\ge \|\ty^{t+1}_k-y_{k}^{t}\|^2,  \; \forall~k=1,\cdots, K.}\label{eq:y_difference_rand1:2}
\end{align}
\end{subequations}
\end{lemma}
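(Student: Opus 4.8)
The plan is to exploit the fact that the $x_k$-subproblem \eqref{eq:x_k_update} is an \emph{unconstrained smooth} minimization (only $g_k$, not the nonsmooth $h$, involves $x_k$), so its first-order optimality condition is an equation rather than an inclusion, and this equation---combined with the dual update \eqref{eq:y_update}---identifies the dual variable with the gradient of $g_k$. Concretely, when $k\in\cC^{t+1}$, stationarity of \eqref{eq:x_k_update} reads $\nabla g_k(x_k^{t+1})+y_k^t+\rho_k(x_k^{t+1}-x_0^{t+1})=0$; substituting the dual update $y_k^{t+1}=y_k^t+\rho_k(x_k^{t+1}-x_0^{t+1})$ collapses this to the key identity $y_k^{t+1}=-\nabla g_k(x_k^{t+1})$. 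Under Assumption A2 the subproblem is strongly convex, so its minimizer is unique and this identity is unambiguous.

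The next step is to observe that the same identity holds at the previous iterate, i.e.\ $y_k^{t}=-\nabla g_k(x_k^{t})$. I would argue this by a short induction on $t$: the identity is established at $t=1$ (when $\cC^1=\{0,\dots,K\}$ forces every block to be updated), and it is preserved at each subsequent iteration because either $k\in\cC^{t+1}$, in which case the argument above re-establishes it, or $k\notin\cC^{t+1}$, in which case \eqref{eq:x_k_update}--\eqref{eq:y_update} are skipped and both $x_k$ and $y_k$ are frozen ($x_k^{t+1}=x_k^{t}$, $y_k^{t+1}=y_k^{t}$), so the identity carries over verbatim. Equivalently one may phrase this via $x_k^{t}=x_k^{t(k)}$ and $y_k^{t}=y_k^{t(k)}$ from \eqref{eq:def_tk}.

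With both identities in hand, I would subtract to get $y_k^{t+1}-y_k^{t}=-\bigl(\nabla g_k(x_k^{t+1})-\nabla g_k(x_k^{t})\bigr)$, take norms, and invoke the Lipschitz bound in A1 to obtain $\|y_k^{t+1}-y_k^{t}\|\le L_k\|x_k^{t+1}-x_k^{t}\|$; squaring gives \eqref{eq:y_difference1}. (When $k\notin\cC^{t+1}$ both sides vanish and the inequality is trivial.) The bounds \eqref{eq:y_difference_rand1} and \eqref{eq:y_difference_rand1:2} follow by exactly the same route applied to the virtual iterates: the optimality conditions of \eqref{eq:hxk} and \eqref{eq:txk} give $\hy_k^{t+1}=-\nabla g_k(\hx_k^{t+1})$ and $\ty_k^{t+1}=-\nabla g_k(\tx_k^{t+1})$ after using \eqref{eq:hy} and \eqref{eq:ty}, and these are compared against the same $y_k^{t}=-\nabla g_k(x_k^{t})$.

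The main obstacle is not any hard estimate---once the gradient identity is in place everything reduces to one application of Lipschitz continuity---but rather the bookkeeping that guarantees $y_k^{t}=-\nabla g_k(x_k^{t})$ under the flexible (randomized or essentially cyclic) block-selection rule, where $x_k$ and $y_k$ may have last moved at some earlier iteration $t(k)<t$. The care needed is to confirm that $y_k$ is updated if and only if $x_k$ is, so that the paired identity is never broken by an asynchronous update; this is exactly the structural feature of Algorithm 2 emphasized in the text preceding Assumption A.
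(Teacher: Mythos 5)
Your proof is correct and takes essentially the same route as the paper's: both combine the first-order condition of the $x_k$-subproblem with the dual update to get the key identity $y_k^{t+1}=-\nabla g_k(x_k^{t+1})$, propagate it to the previous iterate (the paper via the $t(k)$ bookkeeping and the observation that $x_k$ and $y_k$ are always updated together, you via an explicit induction--these are the same argument), and finish with a single application of the Lipschitz bound in A1. Your handling of the virtual iterates $(\hx_k^{t+1},\hy_k^{t+1})$ and $(\tx_k^{t+1},\ty_k^{t+1})$ matches the paper's claim that the remaining inequalities ``follow a similar line of argument.''
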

\begin{proof}
{We will show the first inequality. The second inequality follows a similar line of argument.

To prove \eqref{eq:y_difference1}, first note that the case for $k\notin \cC^{t+1}$ is trivial, as both sides of \eqref{eq:y_difference1} evaluate to zero. Suppose $k\in\cC^{t+1}$. From the $x_k$ update step \eqref{eq:x_k_update} we have the following optimality condition
\begin{align}
\nabla g_k(x_k^{t+1})+y^t_k+\rho_k (x^{t+1}_k-x_0^{t+1})=0,\; \forall~k\in \cC^{t+1}/ \{0\}.
\end{align}
Combined with the dual variable update step \eqref{eq:y_update} we obtain
\begin{align}\label{eq:y_expression1}
\nabla g_k(x_k^{t+1})=-y^{t+1}_k, \; \forall~k\in\cC^{t+1}/ \{0\}.
\end{align}
Combining this with Assumption A1, and noting that for any given $k$, $y_k$ and $x_k$ are always updated in the same iteration, we obtain for all $k\in\cC^{t+1}/ \{0\}$
\begin{align*}
\|y^{t+1}_k-y^t_k\|&=\|y^{t+1}_k-y^{t(k)}_k\|\nonumber\\
&= \|\nabla g_k(x_k^{t+1})-\nabla g_k(x_k^{t(k)})\|\le L_k\|x_k^{t+1}-x^{t(k)}_k\|= L_k\|x_k^{t+1}-x^{t}_k\|.
\end{align*}
The desired result follows.}
%To prove \eqref{eq:y_difference0}, we just need to notice that for all $k=1,\cdots, K$, we have $x^{t}_k=x_k^{t(k)}$ and $y_k^{t}=y_k^{t(k)}$. %\textbf{[[[I don't think this is true. We have $x^t_k=x^{t(k)}_k$ and similarly for $y_k$, but not necessarily $x^{t+1}_k=x_k^{t(k)+1}$. I suggest you prove \eqref{eq:y_difference1} first, and \eqref{eq:y_difference0} follows immediately.]]]}
%So from the $x_k$ update step \eqref{eq:x_k_update} we have the following optimality condition
%\begin{align}
%\nabla g_k(x_k^{t+1})+y^{t}_{k}+\rho_k (x_k^{t+1}-x^{t+1})=0,\; \forall~k,
%\end{align}
%Combined with the dual variable update step \eqref{eq:y_update}, the above condition implies
%\begin{align}\label{eq:y_expression1}
%\nabla g_k(x_k^{t+1})=-y^{t+1}_k, \; \forall~k.
%\end{align}
%Combining this with Assumption A1, and noting that for any given $k$, $y_k$ and $x_k$ are always updated in the same iteration, we obtain
%{\begin{align*}
%\|y^{t+1}_k-y^{{t(k)}}_k\|\le \|\nabla g_k(x_k^{t+1})-\nabla g_k(x_k^{{t(k)}})\|\le L_k\|x_k^{t+1}-x^{{t(k)}}_k\|,\; \forall~k.
%\end{align*}}
%The desired result follows.  \QED
\end{proof}

Next, we use \eqref{eq:y_difference1} to bound the difference of the augmented Lagrangian.
\begin{lemma}\label{lemma:L_difference1}
For Algorithm 2 with either randomized or period-T essentially cyclic update rule, we have the following{
\begin{align}\label{eq:L_descent1}
&L(\{x^{t+1}_k\}, x_0^{t+1}; y^{t+1})-L(\{x^{t}_k\}, x_0^{t}; y^{t})\nonumber\\
&\le\sum_{k\ne 0, k\in\cC^{t+1}}\left(\frac{L^2_k}{\rho_k}-\frac{\gamma_k(\rho_k)}{2}\right)\|x^{t+1}_k-x_{k}^{t}\|^2-\frac{\gamma}{2}\|x_0^{t+1}-x_0^t\|^2.
\end{align}}
\end{lemma}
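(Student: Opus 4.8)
The goal is to show the augmented Lagrangian decreases sufficiently at each iteration. The natural strategy is to split the total change $L(\{x^{t+1}_k\}, x_0^{t+1}; y^{t+1})-L(\{x^{t}_k\}, x_0^{t}; y^{t})$ into three telescoping pieces: first the change due to the dual update (holding the primal fixed at the new iterate), then the change due to the $\{x_k\}_{k\ne 0}$ updates (holding $x_0$ at its new value and $y$ at its old value), and finally the change due to the $x_0$ update. Schematically,
\begin{align*}
L(\{x^{t+1}_k\}, x_0^{t+1}; y^{t+1})-L(\{x^{t}_k\}, x_0^{t}; y^{t})
&=\Big[L(\{x^{t+1}_k\}, x_0^{t+1}; y^{t+1})-L(\{x^{t+1}_k\}, x_0^{t+1}; y^{t})\Big]\\
&\quad+\Big[L(\{x^{t+1}_k\}, x_0^{t+1}; y^{t})-L(\{x^{t}_k\}, x_0^{t+1}; y^{t})\Big]\\
&\quad+\Big[L(\{x^{t}_k\}, x_0^{t+1}; y^{t})-L(\{x^{t}_k\}, x_0^{t}; y^{t})\Big].
\end{align*}

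\textbf{Handling each piece.}
For the dual term, since $L$ is linear in $y$ with gradient $\sum_k (x_k-x_0)$, and the dual update gives $x^{t+1}_k-x_0^{t+1}=\frac{1}{\rho_k}(y^{t+1}_k-y^t_k)$ for updated blocks, this piece equals $\sum_{k\ne 0,k\in\cC^{t+1}}\frac{1}{\rho_k}\|y^{t+1}_k-y^t_k\|^2$. I then invoke Lemma~\ref{lemma:y1}, inequality \eqref{eq:y_difference1}, to bound this by $\sum_{k\ne 0,k\in\cC^{t+1}}\frac{L^2_k}{\rho_k}\|x^{t+1}_k-x^t_k\|^2$, which produces the positive $\frac{L^2_k}{\rho_k}$ coefficient in the claim. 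For the $\{x_k\}$ term, each $x_k^{t+1}$ minimizes a function that is strongly convex with modulus $\gamma_k(\rho_k)$ by Assumption A2; applying the standard strong-convexity descent inequality (the minimizer decreases the objective relative to any other point by at least $\frac{\gamma_k(\rho_k)}{2}$ times the squared distance) gives the $-\frac{\gamma_k(\rho_k)}{2}\|x^{t+1}_k-x^t_k\|^2$ term. For the $x_0$ term, I use that $L$ is strongly convex in $x_0$ with modulus $\gamma=\sum_k\rho_k$ (noted in the remarks after Assumption A) and that $x_0^{t+1}$ is its constrained minimizer over the convex set $X$, yielding $-\frac{\gamma}{2}\|x_0^{t+1}-x_0^t\|^2$. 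Summing the three bounds gives exactly \eqref{eq:L_descent1}.

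\textbf{The main obstacle.}
The subtle point is the bookkeeping forced by the flexible block-selection rule: the identities $x^{t+1}_k-x_0^{t+1}=\frac{1}{\rho_k}(y^{t+1}_k-y^t_k)$ and the strong-convexity descent step must hold only for blocks actually updated (i.e.\ $k\in\cC^{t+1}$), while for $k\notin\cC^{t+1}$ every term vanishes because $x^{t+1}_k=x^t_k$ and $y^{t+1}_k=y^t_k$. I must also be careful that in the $\{x_k\}$ piece the $x_0$ argument is already the updated $x_0^{t+1}$, so the objective minimized in \eqref{eq:x_k_update} is precisely the one defining $x_k^{t+1}$; and in the $x_0$ piece the $\{x_k\}$ arguments are still the old $x^t_k$, matching \eqref{eq:x_update}. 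Keeping these argument slots consistent across the telescoping, and verifying that the strong-convexity inequality applies with the correct modulus in the presence of the (convex) nonsmooth term $h$ and the constraint set $X$ in the $x_0$ step, is where the real care is needed; the individual estimates themselves are routine.
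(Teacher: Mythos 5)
Your proof is correct and is essentially the paper's own argument: the same telescoping decomposition (the paper first splits off the dual piece and then divides the primal piece exactly into your second and third terms), the same identity $x^{t+1}_k-x_0^{t+1}=\frac{1}{\rho_k}(y^{t+1}_k-y^t_k)$ plus Lemma~\ref{lemma:y1} for the dual term, and the same strong-convexity/minimizer estimates for the $x_k$ and $x_0$ pieces, with identical vanishing of all terms for blocks outside $\cC^{t+1}$. The only cosmetic difference is that the paper phrases the descent steps through first-order optimality conditions (with a subgradient $\zeta^{t+1}_{x_0}$ for the constrained, nonsmooth $x_0$ subproblem) rather than the equivalent one-line strong-convexity descent inequality you invoke.
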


\begin{proof}
We first split the successive difference of the augmented Lagrangian by
\begin{align}\label{eq:successive_L1}
&L(\{x^{t+1}_k\}, x_0^{t+1}; y^{t+1})-L(\{x^{t}_k\}, x_0^{t}; y^{t})\nonumber\\
&=\left(L(\{x^{t+1}_k\}, x_0^{t+1}; y^{t+1})-L(\{x^{t+1}_k\}, x_0^{t+1}; y^{t})\right)\nonumber\\
&\quad+\left(L(\{x^{t+1}_k\}, x_0^{t+1}; y^{t})-L(\{x^{t}_k\}, x_0^{t}; y^{t})\right).
\end{align}
The first term in \eqref{eq:successive_L1} can be bounded by
\begin{align}
&L(\{x^{t+1}_k\}, x_0^{t+1}; y^{t+1})-L(\{x^{t+1}_k\}, x_0^{t+1}; y^{t})\nonumber\\
&=\sum_{k=1}^{K}\langle y_k^{t+1}-y_{k}^{t}, x^{t+1}_k-x_0^{t+1}\rangle\nonumber\\
&\stackrel{\rm (a)}%=\sum_{k\ne 0, k\in\cC^{t+1}}\rho_k\|x^{t+1}_k-x_0^{t+1}\|^2\nonumber\\
=\sum_{k\ne 0, k\in\cC^{t+1}}\frac{1}{\rho_k}\|y_k^{t+1}-y_k^{t}\|^2\label{eq:extra}
\end{align}
where in $\rm (a)$ we have use \eqref{eq:y_update}, and the fact that $y_k^{t+1}-y_{k}^{t}=0$ for all variable block $x_k$ that has not been updated (i.e., $k\ne 0, k\notin \cC^{t+1}$).
The second term in \eqref{eq:successive_L1} can be bounded by
\begin{align}
&L(\{x^{t+1}_k\}, x_0^{t+1}; y^{t})-L(\{x^{t}_k\}, x_0^{t}; y^{t})\nonumber\\
&=L(\{x^{t+1}_k\}, x_0^{t+1}; y^{t})-L(\{x^{t}_k\}, x_0^{t+1}; y^{t})+L(\{x^{t}_k\}, x_0^{t+1}; y^{t})-L(\{x^{t}_k\}, x_0^{t}; y^{t})\nonumber\\
&\stackrel{\rm (a)}\le \sum_{k=1}^{K}\left(\left\langle\nabla_{x_k}  L(\{x^{t+1}_k\}, x_0^{t+1}; y^{t}), x^{t+1}_k-x^{t}_k\right\rangle-\frac{\gamma_k(\rho_k)}{2}\|x^{t+1}_k-x_{k}^{t}\|^2\right)\nonumber\\
&\quad\quad\quad+\left\langle\zeta^{t+1}_{x_0}, x_0^{t+1}-x_0^{t}\right\rangle-\frac{\gamma}{2}\|x_0^{t+1}-x_0^t\|^2\nonumber\\
&\stackrel{\rm (b)}= \sum_{k\ne 0, k\in\cC^{t+1}}\left(\left\langle\nabla_{x_k}  L(\{x^{t+1}_k\}, x_0^{t+1}; y^{t}), x^{t+1}_k-x^{t}_k\right\rangle-\frac{\gamma_k(\rho_k)}{2}\|x^{t+1}_k-x_{k}^{t}\|^2\right)\nonumber\\
&\quad\quad\quad+\iota\{0\in\cC^{t+1}\}\left(\left\langle\zeta^{t+1}_{x_0}, x_0^{t+1}-x_0^{t}\right\rangle-\frac{\gamma}{2}\|x_0^{t+1}-x_0^t\|^2\right)\nonumber\\
&\stackrel{\rm (c)}\le -\sum_{k\ne 0, k\in\cC^{t+1}}\frac{\gamma_k(\rho_k)}{2}\|x^{t+1}_k-x_{k}^{t}\|^2-\iota\{0\in\cC^{t+1}\}\frac{\gamma}{2}\|x_0^{t+1}-x_0^t\|^2,\label{eq:extra1}
\end{align}
where in $\rm (a)$ we have used the fact that $L(\{x_k\}, x_0; y)$ is strongly convex w.r.t.\ each $x_k$ and $x_0$, with modulus $\gamma_k(\rho_k)$ and $\gamma$, respectively, and that {$$\zeta^{t+1}_{x_0}\in \partial_{x_0}  L(\{x^{t}_k\}, x_0^{t+1}; y^{t})$$} is some subgradient vector; in $\rm(b)$ we have used the fact that when {$k\notin\cC^{t+1}$} (resp. {$0\notin\cC^{t+1}$}), $x^{t+1}_k=x_{k}^{t}$ (resp. $x_0^{t+1}=x_0^t$), and we have defined $\iota\{0\in\cC^{t+1}\}$ as the indicator function that takes the value $1$ if $0\in\cC^{t+1}$ is true, and takes value $0$ otherwise; in $\rm (c)$ we have used the optimality of each subproblem \eqref{eq:x_k_update} and $\eqref{eq:x_update}$ {(where $\zeta^{t+1}_{x_0}$ is specialized to the subgradient vector that satisfies the optimality condition for problem \eqref{eq:x_update})}.

Combining the above two inequalities \eqref{eq:extra} and \eqref{eq:extra1}, we obtain
\begin{align}%\label{eq:L_descent1}
&L(\{x^{t+1}_k\}, x_0^{t+1}; y^{t+1})-L(\{x^{t}_k\}, x_0^{t}; y^{t})\nonumber\\
&\le -\hspace{-0.5cm}\sum_{k\ne 0, k\in\cC^{t+1}}\hspace{-0.5cm}\frac{\gamma_k(\rho_k)}{2}\|x^{t+1}_k-x_{k}^{t}\|^2+\hspace{-0.5cm}\sum_{k\ne 0, k\in\cC^{t+1}}\frac{1}{\rho_k}\|y_k^{t+1}-y_k^{t}\|^2-\iota\{0\in\cC^{t+1}\}\frac{\gamma}{2}\|x_0^{t+1}-x_0^t\|^2\nonumber\\
&\le\sum_{k\ne 0, k\in\cC^{t+1}}\left(\frac{L^2_k}{\rho_k}-\frac{\gamma_k(\rho_k)}{2}\right)\|x^{t+1}_k-x_{k}^{t}\|^2
-\iota\{0\in\cC^{t+1}\}\frac{\gamma}{2}\|x_0^{t+1}-x_0^t\|^2
\nonumber\end{align}
where the last inequality is due to \eqref{eq:y_difference1}. {The desired result is obtained by noticing the fact that when $0\notin \cC^{t+1}$, we have $x^{t+1}_0-x^t_0 = 0$.}
\end{proof}

The above result implies that if the following condition is satisfied:
\begin{align}\label{eq:rho_condition1}
\rho_k\gamma_k(\rho_k)\ge 2L^2_k,\;\forall~k=1,\cdots, K,
\end{align}
then the value of the augmented Lagrangian function will always decrease. Note that as long as $\gamma_k(\rho_k)\ne 0$, one can always find a $\rho_k$ large enough such that the above condition is satisfied, as the left hand side (lhs) of $\eqref{eq:rho_condition1}$ is monotonically increasing w.r.t.\ $\rho_k$, while the right hand side (rhs) is a constant.

Next we show that $L\left(\{x^{t}_k\}, x_0^{t}; y^{t} \right)$ is in fact convergent.
\begin{lemma}\label{lemma:L_bounded1}
Suppose Assumption A is true. Let $\{\{x_{k}^{t}\},x_0^t,y^t\}$ be generated by Algorithm 2 with either the essentially cyclic rule or the randomized rule. {Then the following limit exists and is lower bounded by $\underline{f}$ defined in Assumption A3:
\begin{align}
\lim_{t\to\infty}L(\{x^{t}_k\}, x_0^t, y^t)\ge\underline{f}.
\end{align}
}
%\item For Algorithm 2 with randomized cyclic rule we have
%\begin{align}
%\lim_{t\to\infty}L(\{\{x^{t}_k\}, x^t, y^t\}=L^*>-\infty, \; \textrm{with probability 1 (w.p.1.)}
%\end{align}
%for some constant $L^*$.
%\end{enumerate}
\end{lemma}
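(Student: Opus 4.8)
The plan is to recognize that Lemma \ref{lemma:L_bounded1} is a statement about a monotone, bounded scalar sequence. By Lemma \ref{lemma:L_difference1} together with the standing condition \eqref{eq:rho_condition1} (which is guaranteed by Assumption A2, since $\rho_k\gamma_k(\rho_k)>2L_k^2$ forces $L_k^2/\rho_k-\gamma_k(\rho_k)/2<0$), the right-hand side of the descent inequality is nonpositive along every realization. Hence the scalar sequence $L^t:=L(\{x_k^t\},x_0^t;y^t)$ is monotonically nonincreasing for both the randomized and the essentially cyclic rules. A nonincreasing sequence always has a limit (possibly $-\infty$), so the only remaining task is to produce a uniform lower bound; establishing $L^t\ge\underline{f}$ for every $t$ will simultaneously guarantee convergence and pin down the claimed bound on the limit.

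The heart of the argument is therefore this lower bound, and the key device is the optimality relation \eqref{eq:y_expression1}. Because $x_k$ and $y_k$ are always updated in the same iteration and $\cC^{1}=\{0,\dots,K\}$, that relation propagates to $\nabla g_k(x_k^t)=-y_k^t$ for every $k$ and every $t\ge 1$. I would substitute this into the augmented Lagrangian \eqref{eq:lagrangian:consensus} to eliminate the dual variable, obtaining
$$L^t=h(x_0^t)+\sum_{k=1}^{K}\Big(g_k(x_k^t)-\langle\nabla g_k(x_k^t),x_k^t-x_0^t\rangle\Big)+\sum_{k=1}^{K}\frac{\rho_k}{2}\|x_k^t-x_0^t\|^2.$$
Next I would invoke the descent lemma for $L_k$-smooth functions (Assumption A1), namely $g_k(x_0^t)\le g_k(x_k^t)+\langle\nabla g_k(x_k^t),x_0^t-x_k^t\rangle+\frac{L_k}{2}\|x_k^t-x_0^t\|^2$, to replace the first-order term by the function value at the consensus point $x_0^t$. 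This yields
$$L^t\ge h(x_0^t)+\sum_{k=1}^{K}g_k(x_0^t)+\sum_{k=1}^{K}\frac{\rho_k-L_k}{2}\|x_k^t-x_0^t\|^2=f(x_0^t)+\sum_{k=1}^{K}\frac{\rho_k-L_k}{2}\|x_k^t-x_0^t\|^2.$$
Since $\rho_k\ge L_k$ by Assumption A2 the quadratic terms are nonnegative, and since $x_0^t\in X$ we have $f(x_0^t)\ge\underline{f}$; hence $L^t\ge\underline{f}$ for all $t$.

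Combining the two pieces, $\{L^t\}$ is nonincreasing and bounded below by $\underline{f}$, so it converges to a limit no smaller than $\underline{f}$, which is exactly the claim. I expect the main obstacle to be the lower-bound step rather than the monotonicity: the cancellation that recovers $f(x_0^t)$ hinges on applying the smoothness inequality in precisely the direction $x_k^t\to x_0^t$, and on the sign condition $\rho_k\ge L_k$ to absorb the leftover $\frac{L_k}{2}\|x_k^t-x_0^t\|^2$ into the penalty. It is worth double-checking that $\nabla g_k(x_k^t)=-y_k^t$ holds for all $t$ (including the initial iteration, where every block is updated) so that the substitution is valid path-wise; the randomized case then requires no separate treatment, since all of the above inequalities hold deterministically for each realization.
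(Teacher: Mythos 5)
Your proposal is correct and follows essentially the same route as the paper's proof: both eliminate the dual variable via the relation $\nabla g_k(x_k^t)=-y_k^t$ (which the paper propagates to non-updated blocks through the $t(k)$ bookkeeping, and you propagate by induction from $\cC^1=\{0,\dots,K\}$ — the same observation), then apply the descent lemma with $\rho_k\ge L_k$ to obtain $L^t\ge f(x_0^t)\ge\underline{f}$, and finally invoke the monotone decrease from Lemma \ref{lemma:L_difference1} under Assumption A2 to conclude convergence.
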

\begin{proof}
Notice that the augmented Lagrangian function can be expressed as
\begin{align}\label{eq:L_lower_bound1}
&L(\{x^{t+1}_k\}, x_0^{t+1}; y^{t+1})\nonumber\\
&=h(x_0^{t+1})+\sum_{k=1}^{K}\left(g_k(x^{t+1}_k)+\langle y^{t+1}_k, x^{t+1}_k-x_0^{t+1}\rangle+\frac{\rho_k}{2}\|x^{t+1}_k-x_0^{t+1}\|^2\right)\nonumber\\
&\stackrel{\rm (a)}=h(x_0^{t+1})+\sum_{k=1}^{K}\left(g_k(x^{t+1}_k)+\langle \nabla g_k(x^{t+1}_k), x_0^{t+1}-x_k^{t+1}\rangle+\frac{\rho_k}{2}\|x^{t+1}_k-x_0^{t+1}\|^2\right)\nonumber\\
&\stackrel{\rm (b)}\ge h(x_0^{t+1})+\sum_{k=1}^{K}g_k(x_0^{t+1})=f(x_0^{t+1})
\end{align}
where $\rm (b)$ comes from the Lipschitz continuity of the gradient of  $g_k$'s (Assumption A1), and the fact that $\rho_k\ge L_k$ for all $k=1,\cdots,K$ (Assumption A2). To see why $\rm (a)$ is true, we first observe that due to \eqref{eq:y_expression1}, we have for all $k\ne 0$ and $k\in\cC^{t+1}$
\begin{align}
\langle y^{t+1}_k, x^{t+1}_k-x_0^{t+1}\rangle=\langle \nabla g_k(x^{t+1}_k), x_0^{t+1}-x_k^{t+1}\rangle.\nonumber
\end{align}
{For all $k\ne 0$ and $k\notin \cC^{t+1}$, it follows from $x^{t+1}_k=x^t_k=x^{t(k)}_k=x^{t(k)+1}_k$ and
$y^{t+1}_k=y^t_k=y^{t(k)}_k=y^{t(k)+1}_k$ that}
\begin{align}
&\langle y^{t+1}_k, x^{t+1}_k-x_0^{t+1}\rangle=\langle y_k^{{t(k)}+1}, x_k^{{t(k)}+1}-x_0^{t+1}\rangle\nonumber\\
&=\langle \nabla g_k(x_k^{{t(k)}+1}), x_0^{t+1}-x_k^{{t(k)}+1}\rangle= \langle\nabla g_k(x^{t+1}_k), x_0^{t+1}-x_k^{t+1}\rangle.\nonumber
\end{align}
Combining these two cases shows that $\rm (a)$ is true.

Clearly, \eqref{eq:L_lower_bound1} and Assumption A3 together imply that $L(\{x^{t+1}_k\}, x_0^{t+1}; y^{t+1})$ is lower bounded. This combined with \eqref{eq:L_descent1} says that whenever the penalty parameter $\rho_k$'s are chosen sufficiently large (as per Assumption A2), $L(\{x^{t+1}_k\}, x_0^{t+1}; y^{t+1})$ is monotonically decreasing and is convergent. This completes the proof.
\end{proof}

We are now ready to prove our first main result, which asserts that the sequence of iterates generated by Algorithm 2 converges to the set of stationary solution of problem \eqref{eq:consensus:admm}.

\begin{theorem}\label{thm:convergence1}
{Assume that Assumption A is satisfied. Then we have the following
\begin{enumerate}
\item We have $\lim_{t\to\infty}\|x^{t+1}_k-x_0^{t+1}\|=0, \; k=1,\cdots, K$, deterministically for the essentially cyclic update rule and almost surely for the randomized update rule.
\item Let $(\{x^*_k\}, x_0^*, y^*)$ denote any limit point of the sequence
$\{\{x^{t+1}_k\}, x_0^{t+1}, y^{t+1}\}$ generated by Algorithm 2. Then the following statement is true (deterministically for the essentially cyclic update rule and almost surely for the randomized update rule)
\begin{align}
0 &=\nabla g_k(x^*_k)+y^*_k,\quad k=1,\cdots,K.\nonumber\\
x_0^*&\in \arg\min_{x\in X}\;   h(x)+\sum_{k=1}^{K}\langle y^*_k, x^*_k-x\rangle\nonumber\\
x_k^*&=x_0^*, \quad k=1,\cdots, K. \nonumber
\end{align}
That is, any limit point of Algorithm 2 is a stationary solution of problem \eqref{eq:consensus:admm}.
\item {If $X$ is a compact set, then the sequence of iterates generated by Algorithm 2 converges to the set of stationary solutions of problem \eqref{eq:consensus:admm}. That is,
        \begin{align}
    \lim_{t\to\infty}\mbox{\rm dist}\left( (\{x^t_k\}, x_0^t, y^t); Z^* \right)=0,
    \end{align}
     where $Z^*$ is the set of primal-dual stationary solutions of problem \eqref{eq:consensus:admm}; $\mbox{\rm dist}(x; Z^*)$ denotes the distance between a vector $x$ and the set $Z^*$, i.e.,  $$\mbox{\rm dist}(x; Z^*)=\min_{\hat{x}\in Z^*}\|x-\hat{x}\|.$$}

\end{enumerate}
}
\end{theorem}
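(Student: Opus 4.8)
The plan is to extract all three claims from the descent inequality of Lemma~\ref{lemma:L_difference1} together with the convergence of the augmented Lagrangian in Lemma~\ref{lemma:L_bounded1}. Since Assumption~A2 gives $\frac{L_k^2}{\rho_k}-\frac{\gamma_k(\rho_k)}{2}<0$, every term on the right-hand side of \eqref{eq:L_descent1} is nonpositive; telescoping over $t$ and using that $L(\{x_k^t\},x_0^t;y^t)$ is convergent and bounded below yields $\sum_{t}\sum_{k\in\cC^{t+1},k\ne0}\|x_k^{t+1}-x_k^t\|^2<\infty$ and $\sum_{t}\iota\{0\in\cC^{t+1}\}\|x_0^{t+1}-x_0^t\|^2<\infty$, so all successive differences of the refreshed blocks vanish. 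To turn this into the constraint-violation statement of Part~1, I would combine the dual update \eqref{eq:y_update} with Lemma~\ref{lemma:y1}: whenever block $k$ is updated at iteration $t+1$, $\|x_k^{t+1}-x_0^{t+1}\|=\frac{1}{\rho_k}\|y_k^{t+1}-y_k^t\|\le\frac{L_k}{\rho_k}\|x_k^{t+1}-x_k^t\|\to0$.

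For the essentially cyclic rule this already suffices: within every window of $T$ iterations each of $x_0,x_1,\dots,x_K$ is updated at least once, so at an arbitrary iteration $t+1$ the value $x_k^{t+1}$ coincides with its value at its last update (at most $T$ steps earlier) while $x_0$ has moved by at most the sum of $T$ successive increments, each $o(1)$, and the triangle inequality forces $\|x_k^{t+1}-x_0^{t+1}\|\to0$. The randomized rule is the main obstacle: a block may stay un-updated for arbitrarily long, so the drift of $x_0$ between two successive updates of $x_k$ cannot be controlled by a fixed number of vanishing increments. Here I would route the argument through the virtual iterates. Taking the conditional expectation of \eqref{eq:L_descent1} over the random block choice and using $p_k^{t+1}\ge p_{\min}>0$ bounds the expected descent above by $-p_{\min}\big(\frac{\gamma}{2}\|\hx_0^{t+1}-x_0^t\|^2+\sum_{k\ne0}(\frac{\gamma_k(\rho_k)}{2}-\frac{L_k^2}{\rho_k})\|\tx_k^{t+1}-x_k^t\|^2\big)$, in which the block-$0$ and block-$k$ contributions are carried by the virtual iterates \eqref{eq:hx} and \eqref{eq:txk}. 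Telescoping the expectations and invoking Fubini gives $\sum_t\|\tx_k^{t+1}-x_k^t\|^2<\infty$ almost surely, hence $\|\tx_k^{t+1}-x_k^t\|\to0$ a.s. Writing $x_k^t-x_0^t=(x_k^t-\tx_k^{t+1})+(\tx_k^{t+1}-x_0^t)$ and noting $\tx_k^{t+1}-x_0^t=\frac{1}{\rho_k}(\ty_k^{t+1}-y_k^t)$ is dominated by $\frac{L_k}{\rho_k}\|\tx_k^{t+1}-x_k^t\|$ via \eqref{eq:y_difference_rand1:2}, both summands vanish a.s.\ and Part~1 follows without any bound on inter-update gaps.

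For Part~2 I would fix a convergent subsequence $(\{x_k^{t_j}\},x_0^{t_j},y^{t_j})\to(\{x_k^*\},x_0^*,y^*)$; Part~1 immediately gives $x_k^*=x_0^*$. The stationarity relations then come from passing to the limit. The identity $\nabla g_k(x_k^t)=-y_k^t$, which holds at every $t$ since $x_k$ and $y_k$ are frozen together (cf.\ \eqref{eq:y_expression1}), gives $0=\nabla g_k(x_k^*)+y_k^*$ by continuity of $\nabla g_k$. The variational inequality defining the $x_0$-update passes to the limit after using Part~1 to annihilate the penalty contributions $\rho_k(x_0-x_k^t)$ at the limit (where $x_k^*=x_0^*$), yielding $x_0^*\in\arg\min_{x\in X}h(x)+\sum_k\langle y_k^*,x_k^*-x\rangle$; for bookkeeping one evaluates this condition at the most recent $x_0$-update for the cyclic rule, or through the virtual iterate \eqref{eq:hx} for the randomized rule. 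Closedness of $\partial h$ and of $X$ legitimizes each passage to the limit.

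Finally, for Part~3 I would invoke compactness of $X$ to bound the whole trajectory: $x_0^t\in X$ is bounded, $x_k^t=x_0^t+(x_k^t-x_0^t)$ is bounded since the constraint violation even tends to $0$, and $y_k^t=-\nabla g_k(x_k^t)$ is bounded because $\nabla g_k$ is Lipschitz, hence bounded on bounded sets. The sequence therefore has limit points, all of which lie in $Z^*$ by Part~2. A routine contradiction argument then finishes: if $\dist((\{x_k^t\},x_0^t,y^t);Z^*)\not\to0$, some subsequence stays a fixed distance $\varepsilon>0$ from $Z^*$; by boundedness it has a further convergent subsequence with limit $z$, which belongs to $Z^*$ by Part~2, contradicting $\dist(\cdot;Z^*)\ge\varepsilon$ since $\dist(\cdot;Z^*)$ is continuous. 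I expect the delicate point throughout to be the randomized constraint-violation argument of Part~1; once the virtual iterates are in place, Parts~2 and~3 reduce to standard limiting and compactness arguments.
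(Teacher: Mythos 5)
Your overall architecture---descent from Lemma~\ref{lemma:L_difference1} plus the lower bound of Lemma~\ref{lemma:L_bounded1} to kill successive differences, Lemma~\ref{lemma:y1} plus the dual update \eqref{eq:y_update} to convert these into constraint-violation decay, then limiting arguments for Part~2 and a compactness/contradiction argument for Part~3---is exactly the paper's, and your treatment of the essentially cyclic rule and of Parts~2 and~3 is correct (your telescoping-plus-Fubini device is a legitimate substitute for the paper's appeal to the supermartingale convergence theorem).

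The genuine gap is in the randomized half of Part~1, precisely the step you flagged as delicate. Your claimed bound
\begin{equation*}
\mathbb{E}\left[L^{t+1}-L^{t}\mid \cdot\right]\le -p_{\rm min}\left(\frac{\gamma}{2}\|\hx_0^{t+1}-x_0^t\|^2+\sum_{k\ne 0}\left(\frac{\gamma_k(\rho_k)}{2}-\frac{L_k^2}{\rho_k}\right)\|\tx_k^{t+1}-x_k^t\|^2\right)
\end{equation*}
is false. The virtual iterate $\tx_k^{t+1}$ is realized by the algorithm only on the event $\{k\in\cC^{t+1},\,0\notin\cC^{t+1}\}$, so in the conditional expectation of \eqref{eq:L_descent1} the term $\|\tx_k^{t+1}-x_k^t\|^2$ enters with coefficient ${\rm Pr}(k\in\cC^{t+1},0\notin\cC^{t+1})$, which under the paper's independent-sampling computation equals $p_k^{t+1}(1-p_0^{t+1})$---not anything bounded below by $p_{\rm min}$. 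The sampling model only bounds the probabilities from \emph{below}, so $p_0^{t+1}$ may equal $1$ (or tend to $1$), in which case $p_k^{t+1}(1-p_0^{t+1})$ vanishes and the expected descent gives no control whatsoever on $\|\tx_k^{t+1}-x_k^t\|$; your subsequent route through \eqref{eq:y_difference_rand1:2} then has no input. The fix is to retain the terms whose coefficients \emph{are} uniformly positive, namely $\|\hx_0^{t+1}-x_0^t\|^2$ (coefficient $p_0^{t+1}\ge p_{\rm min}$) and $\|\hx_k^{t+1}-x_k^t\|^2$ (coefficient $p_0^{t+1}p_k^{t+1}\ge p_{\rm min}^2$), which is what the paper does. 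Then \eqref{eq:y_difference_rand1} together with the definition \eqref{eq:hy} gives $\|\hx_k^{t+1}-\hx_0^{t+1}\|=\frac{1}{\rho_k}\|\hy_k^{t+1}-y_k^t\|\le\frac{L_k}{\rho_k}\|\hx_k^{t+1}-x_k^t\|\to 0$ a.s., and the triangle inequality $\|x_k^t-x_0^t\|\le\|x_k^t-\hx_k^{t+1}\|+\|\hx_k^{t+1}-\hx_0^{t+1}\|+\|\hx_0^{t+1}-x_0^t\|$ yields Part~1 almost surely. With that single replacement the rest of your proof stands.
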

\begin{proof}
{We first show part (1) of the theorem}. For the essentially cyclic update rule, Lemma \ref{lemma:L_difference1} implies that
\begin{align}
&L(\{x^{t+T}_k\}, x_0^{t+T}; y^{t+T})-L(\{x^{t}_k\}, x_0^{t}; y^{t})\nonumber\\
&\le\sum_{i=1}^{T}\sum_{k\ne 0, k\in\cC^{t+i}}\left(\frac{L^2_k}{\rho_k}-\frac{\gamma_k(\rho_k)}{2}\right)\|x^{t+i}_k-x^{t+i-1}_k\|^2-\frac{\gamma}{2}\|x_0^{t+i-1}-x_0^{t+i}\|^2\nonumber\\
&= \sum_{i=1}^{T}\sum_{k=1}^{K}\left(\frac{L^2_k}{\rho_k}-\frac{\gamma_k(\rho_k)}{2}\right)\|x^{t+i}_k-x^{t+i-1}_k\|^2-
\frac{\gamma}{2}\|x_0^{t+i-1}-x_0^{t+i}\|^2,\nonumber
\end{align}
where the last equality follows from the fact $x^{t+i}_k=x^{t+i-1}_k$ if $k\not\in\cC^{t+i}$ and $k\neq0$.
Using the fact that each index in $\{0,\cdots, K\}$ will be updated at least once during $[t,\;t+T]$, as well as  Lemma \ref{lemma:L_bounded1} and the bounds for $\rho_k$'s in Assumption A2, we have
\begin{align}\label{eq:differenceZero}
\|x_0^{t+1}-x_0^{t(0)}\|\to 0,\quad \|x^{t+1}_k-x_{k}^{t(k)}\|\to 0, \;\forall~k=1,\cdots,K.
\end{align}
By Lemma~\ref{lemma:y1}, we further obtain $\|y^{t+1}_k-y_{k}^{t(k)}\|\to 0$ for all $k=1,2,...,K$.
In light of the dual update step of Algorithm 2, the fact that $\|y^{t+1}_k-y_{k}^{t(k)}\|\to 0$ implies that $\|x^{t+1}_k-x_0^{t+1}\|\to 0$.

For the randomized update rule, we can take the conditional expectation (over the choice of the blocks) on both sides of \eqref{eq:L_descent1} and obtain{
\begin{align}
&\mathbb{E}\left[L(\{x_k^{t+1}\}, x_0^{t+1}; y^{t+1})-L(\{x^{t}_k\}, x_0^{t}; y^{t})\mid \{x^{t}_k\}, x_0^{t}; y^{t}\right]\nonumber\\
&\le\mathbb{E}\left[\sum_{k\ne 0, k\in\cC^{t+1}}\left(\frac{L^2_k}{\rho_k}-\frac{\gamma_k(\rho_k)}{2}\right)\|x^{t+1}_k-x_{k}^{t}\|^2-\frac{\gamma}{2}\|x_0^{t+1}-x_0^t\|^2 \;{\bigg|}\; \{x^{t}_k\}, x_0^{t}; y^{t}\right]\nonumber\\
&{\le}\sum_{k=1}^{K}p_k {p_0}\left(\frac{L^2_k}{\rho_k}-\frac{\gamma_k(\rho_k)}{2}\right)\|{\hx^{t+1}_k}-x_{k}^{t}\|^2-p_0\frac{\gamma}{2}\|{\hx_0^{t+1}}-x_0^{t}\|^2\nonumber\\
&\quad+\sum_{k=1}^{K}p_k {(1-p_0)}\left(\frac{L^2_k}{\rho_k}-\frac{\gamma_k(\rho_k)}{2}\right)\|{\tx^{t+1}_k}-x_{k}^{t}\|^2\nonumber\\
&\le {p^2_{\rm min}}\sum_{k=1}^{K}\left(\frac{L^2_k}{\rho_k}-\frac{\gamma_k(\rho_k)}{2}\right)\|{\hx^{t+1}_k}-x_{k}^{t}\|^2-p_{\rm min}\frac{\gamma}{2}\|{\hx_0^{t+1}}-x_0^{t}\|^2\nonumber
\end{align}}
{where in the last two inequalities, we have used the fact that $\rho_k$'s satisfy Assumption A2, hence $\frac{L^2_k}{\rho_k}-\frac{\gamma_k(\rho_k)}{2}< 0$ for all $k$}; the last inequality follows from the fact that $p_k\ge p_{\rm min}$ for all $k=0,\cdots,K$.
{
Note that by Lemma \ref{lemma:L_bounded1}, $L(\{x_k^{t+1}\}, x_0^{t+1}; y^{t+1})-\underline{f}\ge 0$ for all $t$, where $\underline{f}$ is defined in Assumption A3. Then let us substract both sides of the above inequality by $\underline{f}$, and invoke the Supermartigale Convergence Theorem \cite[Proposition 4.2]{bertsekas96}}. We conclude that $L(\{x^{t+1}_k\}, x_0^{t+1}; y^{t+1})$ is convergent almost surely (a.s.), and that
\begin{align}
\|{\hx_0^{t+1}}-x_0^{t}\|\to 0,\quad \|{\hx^{t+1}_k}-x_{k}^{t}\|\to 0, \;\forall~k=1,\cdots,K, \quad {\rm a.s.}
\end{align}
By Lemma~\ref{lemma:y1}, we further obtain $\|{\hy^{t+1}_k}-y_{k}^{t}\|\to 0, \ {\rm a.s.}$ and for  all $k=1,2,...,K$.
Finally, from the definition of $\hy^{t+1}$, we see that $\|{\hy^{t+1}_k}-y_{k}^{t}\|\to 0$ a.s.\ implies that {$\|\hx_0^{t+1}-\hx^{t+1}_k\|\to 0$} a.s.\  for all $k=1,2,...,K$.

Next we show part (2) of the theorem. For simplicity, we consider only the essentially cyclic rule as the proof for the randomized rule is similar. We begin by examining the optimality condition for the $x_k$ and $x_0$ subproblems at iteration $t+1$. Suppose $k\ne 0,\; k\in\cC^{t+1}$, then we have
\begin{align}
&\nabla g_k(x^{t+1}_k)+y_{k}^{t}+\rho_k(x^{t+1}_k-x_0^{t+1})= 0.
\end{align}
Similarly, suppose $0\in\cC^{t+1}$, {then there exists an $\eta^{t+1}\in \partial h(x_0^{t+1})$ such that
\begin{align}
&\left\langle x-x_0^{t+1}, \eta^{t+1}-\sum_{k=1}^{K}\left(y_{k}^{t}-\rho_k(x_0^{t+1}-x_0^{t})\right)\right\rangle\ge 0, \; \forall~x\in X.\nonumber
\end{align}}
These expressions imply that
\begin{align}
&\nabla g_k(x^{t+1}_k)+y_{k}^{t}+\rho_k(x^{t+1}_k-x_0^{t+1})= 0, \; k\ne 0,\; k\in\cC^{t+1}\nonumber\\
&h(x)-h(x_0^{t+1})+\left\langle x-x_0^{t+1}, \sum_{k=1}^{K}\left(-y_{k}^{t}+\rho_k(x_0^{t+1}-x_0^{t})\right)\right\rangle\ge 0, \; \forall~x\in X,\; {\rm if}\; 0\in\cC^{t+1}.
\end{align}
Using the definition of the essentially cyclic update rule, we have that for all $t$
\begin{align}\label{eq:opt_t}
\begin{split}
&\nabla g_k(x_k^{r(k)})+y_k^{r(k)}= 0, \; {\forall~k\ne 0},\; \mbox{for some}\; {r(k)}\in[t,\;t+T],\\
&h(x)-h(x_0^{r(0)})+\bigg\langle x-x_0^{r(0)}, \sum_{k=1}^{K}\left(-y^{r(0)-1}_k+\rho_k(x_0^{r(0)}-x^{r(0)-1}_k)\right)\bigg\rangle\ge 0, \\
&\quad\quad\quad\quad\quad\quad\quad\quad\quad\quad\quad\quad\forall~x\in X,\; \mbox{for some}\; r(0)\in[t,\;t+T].
\end{split}
\end{align}
Note that $T$ is finite, and that $\|x^{t+1}_k-x^{t}_k\|\to 0$, $\|x_0^{t+1}-x_0^t\|\to 0$ and $\|y_{k}^{t+1}-y_{k}^{t}\|\to 0$, we have
\begin{align}
&\|x_k^{r(k)}-x_k^{t+1}\|\to 0, \; \forall~k, \quad \|x_0^{r(0)}-x_0^{t+1}\|\to 0, \nonumber\\
&\|y_k^{t+1}-y_k^{r(k)}\|\to 0, \quad \|y_k^{t+1}-y_k^{r(0)-1}\|\to 0, \; \forall~k.
\end{align}
Using this result, taking limit for \eqref{eq:opt_t}, and using the fact that $\|x^{t+1}_k-x^{t}_k\|\to 0$, $x_0^{t+1}\to x_0^*$, $x^{t+1}_k\to x_k^*$, $y_k^{t+1}\to y_k^*$ for all $k$, we have
\begin{align}\label{eq:kkt_sat}
&\nabla g_k(x^*_k)+y^*_k=0, \; k=1,\cdots,K\nonumber\\
&h(x)-h(x_0^{*})+\sum_{k=1}^{K}\big\langle x-x_0^{*}, -y^*_k\big\rangle\ge 0,\; \forall x\in X.
\end{align}
Due to the fact that $\|y_k^{t+1}-y_k^t\|\to 0$ for all $k$, we have that the primal feasibility is achieved in the limit, i.e.,
\begin{align}
x^*_k=x_0^*,\; \forall~k=1,\cdots,K.
\end{align}
This set of equalities together with \eqref{eq:kkt_sat} imply
\begin{align}
&h(x)+\sum_{k=1}^{K}\big\langle x^*_k-x, y^*_k\big\rangle-\left(h(x_0^{*})+\sum_{k=1}^{K}\big\langle x^*_k-x_0^{*}, y^*_k\big\rangle\right)\ge 0,\; \forall x\in X.
\end{align}
This concludes the proof of part (2).

{To prove part 3, we first show that there exists a limit point for each of the sequences $\{x_{k}^{t}\}$, $\{x_0^t\}$ and $\{y^t\}$. Let us consider only the essentially cyclic rule. Due to the compactness assumption of $X$, it is obvious that $\{x_0^t\}$ must have a limit point. Also by a similar argument leading to \eqref{eq:differenceZero}, we see that $\|x_{k}^{t}-x_0^t\|\to 0$, thus for each $k$, $x_{k}^{t}$ must also lie in a compact set thus have a limit point.
Note that the Lipschitz continuity of $\nabla g_k$ combined with the compactness of the set $X$ implies that the set $\{\nabla g_k(x) \mid x\in X\}$ is bounded, therefore $\{\nabla g_k(x_k^{t})\}$ is a bounded sequence. Using \eqref{eq:y_expression1}, we conclude that that $\{y^{t}_k\}$ is also a bounded sequence, therefore must have at least one limit point.}

{We prove part 3 by contradiction. Because the feasible set is compact, then $\{x_k^t\}$ lies in a compact set. From the argument in the previous part it is easy to see that $\{x^t_0\}$, $\{y^t\}$ also lie in some compact sets.  Then every subsequence will have a limit point. Suppose that there exists a subsequence $\{x_{k}^{t_j}\}$, $x_0^{t_j}$ and $\{y^{t_j}\}$ such that
\begin{align}
&    (\{{x}^{t_j}_{k}\}, {x}^{t_j}_0, {y^{t_j}})\to (\{\hat{x}_{k}\}, \hat{x}_0, \hat{y})\label{eq:contradiction1}
\end{align}
where $(\{\hat{x}_{k}\}, \hat{x}_0, \hat{y})$ is some limit point, and by part 2, we have $(\hat{x}_{k}, \hat{x}_0, \hat{y})\in Z^*$. By further restricting to a subsequence if necessary, we can assume that $(\hat{x}_{k}, \hat{x}_0, \hat{y})$ is the unique limit point.

Suppose that this sequence does not converge to the set of stationary solutions, i.e.,
\begin{align}
&    \lim_{j\to\infty}\mbox{\rm dist}\left( (\{x^{t_j}_k\}, x_0^{t_j}, y^{t_j}); Z^* \right)=\gamma>0.\label{eq:contradictio2}
    \end{align}
 Then it follows that there exists some $J(\gamma)>0$ such that
$$ \|(\{{x}^{t_j}_{k}\}, {x}^{t_j}_0, {y^{t_j}})-(\{\hat{x}_{k}\}, \hat{x}_0, \hat{y})\|\le \gamma/2,\quad \forall~j\ge J(\gamma).$$
By the definition of the distance function we have
$$\mbox{dist}\left((\{{x}^{t_j}_{k}\}, {x}^{t_j}_0, {y^{t_j}}); Z^*\right)\le \mbox{dist}\left((\{{x}^{t_j}_{k}\}, {x}^{t_j}_0, {y^{t_j}}),(\{\hat{x}_{k}\}, \hat{x}_0, \hat{y})\right).$$
Combining the above two inequalities we must have
$$\mbox{dist}\left((\{{x}^{t_j}_{k}\}, {x}^{t_j}_0, {y^{t_j}}); Z^*\right)\le \gamma/2, \quad \forall~t_j\ge T_j(\gamma).$$
This contradicts to  \eqref{eq:contradictio2}. The desired result is proven.
}
\end{proof}

The analysis presented above is different from the conventional analysis of the ADMM algorithm where the main effort is to bound the distance between the current iterate and the optimal solution set. The above analysis is partly motivated by our previous analysis of the convergence of ADMM for multi-block convex problems, where the progress of the algorithm is measured by the combined decrease of certain primal and dual gaps; see \cite[Theorem 3.1]{HongLuo2012ADMM}. Nevertheless, the nonconvexity of the problem makes it difficult to estimate either the primal or the dual optimality gaps. Therefore we choose to use the decrease of the augmented Lagrangian as a measure of the progress of the algorithm.

{
Next we analyze the iteration complexity of the vanilla ADMM  (i.e., Algorithm 1). To state our result, let us define the {\it proximal gradient} of the augmented Lagrangian function as
\begin{align}
\tilde{\nabla}L(\{x_k\},x_0,y)=\left[\begin{array}{l}
x_0-\prox_{h}\left[x_0-\nabla_{x_0}(L(\{x_k\},x_0,y)-h(x_0))\right]\\
\nabla_{x_1}L(\{x_k\},x_0,y)\\
\quad\quad\quad\vdots\\
\nabla_{x_K}L(\{x_k\},x_0,y)\\
\end{array}
\right]
\end{align}
where $\prox_{h}[z]:=\arg\min_{x} h(x)+\frac{1}{2}\|x-z\|^2$ is the proximity operator. We will use the following quantity to measure the progress of the algorithm
$$P(\{x_k^{t}\}, x^t, y^t):=\|\tilde{\nabla}L(\{x^t_k\},x^t_0,y^t)\|^2+\sum_{k=1}^{K}\|x^{t}_k-x^{t}_0\|^2.$$
It can be verified that if $P(\{x_k^{t}\}, x^t, y^t)\to 0$, then a stationary solution of the problem \eqref{eq:consensus:admm} is obtained. We have the following iteration complexity result.

\begin{theorem}
Suppose Assumption A is satisfied. Let $T(\epsilon)$ denote an iteration index in which the following inequality is achieved
$$T(\epsilon):=\min\left\{t\mid P(\{x_k^{t}\}, x^t, y^t)\le \epsilon, t\ge 0\right\}$$
for some $\epsilon>0$. Then there exists some constant $C>0$ such that
\begin{align}
\epsilon\le \frac{C (L(\{x^1_k\},x^1_0,y^1)-\underline{f})}{T(\epsilon)}.
\end{align}
where $\underline{f}$ is defined in Assumption A3.
\end{theorem}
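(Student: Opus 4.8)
The plan is to turn the per-iteration sufficient decrease of the augmented Lagrangian into a counting bound on how many iterations can pass before the stationarity measure $P$ falls below $\epsilon$. For Algorithm~1 the update set is $\cC^{t+1}=\{0,1,\dots,K\}$ at every iteration, so Lemma~\ref{lemma:L_difference1} specializes to
\begin{align}
L(\{x^{t+1}_k\},x_0^{t+1};y^{t+1})-L(\{x^{t}_k\},x_0^{t};y^{t})\le -C_1\sum_{k=1}^{K}\|x^{t+1}_k-x^{t}_k\|^2-\frac{\gamma}{2}\|x_0^{t+1}-x_0^t\|^2,\nonumber
\end{align}
where $C_1:=\min_k\big(\tfrac{\gamma_k(\rho_k)}{2}-\tfrac{L_k^2}{\rho_k}\big)>0$ by Assumption~A2. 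Setting $C_3:=\min\{C_1,\gamma/2\}>0$, telescoping from iteration $1$, and using Lemma~\ref{lemma:L_bounded1} (which guarantees $L\ge\underline{f}$) yields
\begin{align}
C_3\sum_{t\ge 1}\Big(\sum_{k=1}^{K}\|x^{t+1}_k-x^{t}_k\|^2+\|x_0^{t+1}-x_0^t\|^2\Big)\le L(\{x^{1}_k\},x_0^{1};y^{1})-\underline{f},\nonumber
\end{align}
so the total accumulated movement of the iterates is controlled by the initial Lagrangian gap.

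The heart of the argument is to bound $P(\{x_k^{t}\},x_0^t,y^t)$ by these successive differences. For the $x_k$-blocks, the optimality identity \eqref{eq:y_expression1} gives $\nabla g_k(x_k^{t})=-y_k^{t}$, so $\nabla_{x_k}L(\{x_k^{t}\},x_0^t,y^t)=\rho_k(x_k^{t}-x_0^t)=y_k^t-y_k^{t-1}$ by the dual update; Lemma~\ref{lemma:y1} then bounds both this vector and the feasibility residual $\|x_k^t-x_0^t\|=\tfrac{1}{\rho_k}\|y_k^t-y_k^{t-1}\|$ by $\|x_k^t-x_k^{t-1}\|$. The $x_0$-component is the main obstacle: the proximal-gradient map in the definition of $\tilde\nabla L$ is evaluated with the \emph{current} data $(\{x_k^t\},y^t)$, whereas the $x_0$-update that produced $x_0^t$ used the \emph{stale} data $(\{x_k^{t-1}\},y^{t-1})$. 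I would exploit that $x_0^t$ is the exact minimizer of its subproblem, hence a fixed point of the proximal-gradient map built from the stale gradient; the nonexpansiveness of $\prox_h$ then bounds the $x_0$-component of $\tilde\nabla L$ by the change in the smooth gradient, namely $\|\sum_k (y_k^t-y_k^{t-1})+\sum_k\rho_k(x_k^t-x_k^{t-1})\|$, which Lemma~\ref{lemma:y1} together with Cauchy--Schwarz again reduces to a multiple of $\sum_k\|x_k^t-x_k^{t-1}\|^2$. Collecting the three estimates produces a constant $C'>0$ with $P(\{x_k^{t}\},x_0^t,y^t)\le C'\sum_{k=1}^K\|x_k^t-x_k^{t-1}\|^2$.

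Finally I would combine the two ingredients. Summing the last estimate over $t$ and invoking the telescoped bound gives $\sum_{t}P(\{x_k^{t}\},x_0^t,y^t)\le \tfrac{C'}{C_3}\big(L(\{x^{1}_k\},x_0^{1};y^{1})-\underline{f}\big)$. By the definition of $T(\epsilon)$ we have $P(\{x_k^{t}\},x_0^t,y^t)>\epsilon$ for every $t<T(\epsilon)$, so summing those strictly-positive terms yields $(T(\epsilon)-1)\,\epsilon\le \tfrac{C'}{C_3}\big(L(\{x^{1}_k\},x_0^{1};y^{1})-\underline{f}\big)$; rearranging and using $T(\epsilon)-1\ge T(\epsilon)/2$ gives the claimed inequality with $C$ absorbing $C'$, $C_3$, and the off-by-one factor. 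The only delicate points are the treatment of the constraint $x_0\in X$ inside the proximal-gradient map (which I would absorb into $h$ through its indicator so that $\prox_h$ remains nonexpansive), and the careful index bookkeeping relating $P$ at iteration $t$ to the movement between iterations $t-1$ and $t$.
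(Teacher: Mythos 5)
Your proposal is correct, and it follows the same skeleton as the paper's own proof: sufficient decrease of the augmented Lagrangian (Lemma \ref{lemma:L_difference1}) combined with its lower bound (Lemma \ref{lemma:L_bounded1}) gives a telescoping bound on the total squared movement, then the stationarity measure $P$ is bounded by squared successive differences, and a counting argument over $t<T(\epsilon)$ finishes. The genuine difference is the direction in which you take differences. The paper bounds $P(\{x_k^t\},x_0^t,y^t)$ by the \emph{forward} movement $\|x_0^{t+1}-x_0^t\|$, $\|x_k^{t+1}-x_k^t\|$; this is the natural choice because the update producing $x_0^{t+1}$ minimizes $L(\{x_k^t\},\cdot\,;y^t)$, i.e., it uses exactly the data appearing in $P$ at time $t$, so the fixed-point relation $x_0^{t+1}=\prox_h\bigl[x_0^{t+1}-\sum_k\rho_k(x_0^{t+1}-x_k^t-y_k^t/\rho_k)\bigr]$ involves no data mismatch and nonexpansiveness gives the estimate immediately. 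You instead difference \emph{backward}: you reduce the $x_k$-gradient components to $y_k^t-y_k^{t-1}$ via $\nabla g_k(x_k^t)=-y_k^t$, and you handle the $x_0$-component by observing that $x_0^t$ is a prox fixed point for the stale data $(\{x_k^{t-1}\},y^{t-1})$ and bounding the perturbation of the prox argument. Both computations are valid, and your version has the aesthetic advantage that the final bound needs only $\sum_k\|x_k^t-x_k^{t-1}\|^2$, with no $x_0$-movement term at all (indeed the paper's own inequality \eqref{eq:estimate:x} is backward-looking in the same way). What the backward route costs you is the boundary: $P$ at $t=1$ cannot be bounded this way since there is no iteration-$0$ movement, so your summation starts at $t=2$ and your factor-of-two absorption silently needs $T(\epsilon)$ not too small, whereas the paper's forward bound is valid from $t=1$. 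This is a cosmetic defect of exactly the same order as the paper's own off-by-one (the paper asserts $T(\epsilon)\epsilon\le C(L^1-\underline{f})$ where its argument literally yields $(T(\epsilon)-1)\epsilon\le C(L^1-\underline{f})$), and both are absorbed into the constant $C$, so I would not count it as a gap. Finally, your remark that the indicator of $X$ must be folded into $h$ so that $\prox_h$ matches the constrained $x_0$-update is exactly the convention the paper uses implicitly in its own proof; flagging it explicitly is an improvement, not a deviation.
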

\begin{proof}
We first show that there exists a constant $\sigma_1>0$ such that
\begin{align}
\|\tilde{\nabla}L(\{x^t_k\},x^t_0,y^t)\|\le \sigma_1 \left(\|x_0^{t+1}-x_0^t\|+\sum_{k=1}^{K}\|x^{t+1}_k-x^t_k\|\right), \; \forall~r\ge 1. \label{eq:sigma}
\end{align}
This proof follows similar steps of \cite[Lemma 2.5]{HongLuo2012ADMM}. From the optimality condition of the $x_0$ update step \eqref{eq:x_update} we have
$$x^{t+1}_0=\prox_h\left[ x^{t+1}_0-\sum_{k=1}^{K}\rho_k\left(x^{t+1}_0-x^t_k-\frac{y^t_k}{\rho_k}\right)  \right].$$
This implies that
\begin{align}
&\|x^t_0-\prox_{h}\left[x^t_0-\nabla_{x_0}(L(\{x^t_k\},x^t_0,y^t)-h(x^t_0))\right]\|\nonumber\\
&=\left\|x^{t}_0-x^{t+1}_0+x^{t+1}_0-\prox_h\left[x^t_0-\sum_{k=1}^{K}\rho_k(x^t_0-x^{t}_k-\frac{y^t_k}{\rho_k})\right]\right\|\nonumber\\
&\le \|x^{t}_0-x^{t+1}_0\|+\Bigg\|\prox_h\left[ x^{t+1}_0-\sum_{k=1}^{K}\rho_k\left(x^{t+1}_0-x^t_k-\frac{y^t_k}{\rho_k}\right)  \right]\nonumber\\
&\quad\quad\quad\quad-\prox_h\left[x^t_0-\sum_{k=1}^{K}\rho_k(x^t_0-x^{t}_k-\frac{y^t_k}{\rho_k})\right]\Bigg\|\nonumber\\
&\le 2\|x^{t+1}_0-x^t_0\|+\sum_{k=1}^{K}\rho_k\|x^t_0-x^{t+1}_0\|\label{eq:prox1}
\end{align}
where in the last inequality we have used the nonexpansiveness of the proximity operator.

Similarly, the optimality condition of the $x_k$ subproblem is given by
$$\nabla g_k(x^{t+1}_k)+\rho_k\left(x^{t+1}_k-x^{t+1}_0+\frac{y^{t}_k}{\rho_k}\right)=0.$$
Therefore we have
\begin{align}
&\|\nabla_{x_k}L(\{x^t_k\},x^t_0,y^t)\|\nonumber\\
&=\|\nabla g_k(x^{t}_k)+\rho_k(x^{t}_k-x^{t}_0+\frac{y^{t}_k}{\rho_k})\|\nonumber\\
&=\left\|\left(\nabla g_k(x^{t}_k)+\rho_k(x^{t}_k-x^{t}_0+\frac{y^{t}_k}{\rho_k})\right)- \left(\nabla g_k(x^{t+1}_k)+\rho_k(x^{t+1}_k-x^{t+1}_0+\frac{y^{t}_k}{\rho_k})\right)\right\|\nonumber\\
&\le (L_k+\rho_k)\|x^t_k-x^{t+1}_k\|+\rho_k\|x^t_0-x^{t+1}_0\|\label{eq:prox2}.
\end{align}
Therefore, combining \eqref{eq:prox1} and \eqref{eq:prox2}, we have
\begin{align}
  \|\tilde{\nabla}L(\{x^t_k\},x^t_0,y^t)\|\le \left(2+\sum_{k=1}^{K}2\rho_k\right)\|x^t_0-x^{t+1}_0\|+\sum_{k=1}^{K}(L_k+\rho_k)\|x^t_k-x^{t+1}_k\|.\label{eq:estimate:nL}
\end{align}
By taking $\sigma_1=\max\left\{(2+ \sum_{k=1}^{K}2\rho_k), L_1+\rho_1, \cdots, L_K+\rho_K\right\}$, \eqref{eq:sigma} is proved.

According to Lemma \ref{lemma:y1}, we have
\begin{align}
\sum_{k=1}^{K}\|x^t_k-x^t_0\| = \sum_{k=1}^{K}\frac{1}{\rho_k}\|y^{t+1}_k-y^t_k\| \le \sum_{k=1}^{K}\frac{L_k}{\rho_k}\|x_k^{t+1}-x^t_k\|.\label{eq:estimate:x}
\end{align}
The inequalities \eqref{eq:estimate:nL} -- \eqref{eq:estimate:x} implies that for some $\sigma_3>0$
\begin{align}
&\sum_{k=1}^{K}\|x^t_k-x^t_0\|^2 + \|\tilde{\nabla}L(\{x^t_k\},x^t_0,y^t)\|^2\nonumber\\
&\le \sigma_3\left(\|x^t_0-x^{t+1}_0\|^2+\sum_{k=1}^{K}\|x^t_k-x^{t+1}_k\|^2\right). \label{eq:total:square}
\end{align}

According to Lemma \ref{lemma:L_difference1}, there exists a constant $\sigma_2 = \min\left\{\{ \frac{\gamma_k(\rho_k)}{2}-\frac{L^2_k}{\rho_k}\}_{k=1}^{K}, \frac{\gamma}{2} \right\}$
such that
\begin{align}
&L(\{x^{t}_k\}, x_0^{t}; y^{t})-L(\{x^{t+1}_k\}, x_0^{t+1}; y^{t+1})\nonumber\\
&\ge\sigma_2\left(\sum_{k=1}^{K}\|x^{t+1}_k-x_{k}^{t}\|^2+\|x_0^{t+1}-x_0^t\|^2\right).\label{eq:estimate:L}
\end{align}

Combining \eqref{eq:total:square} and \eqref{eq:estimate:L} we have
\begin{align}
  &\sum_{k=1}^{K}\|x^t_k-x^t_0\|^2 + \|\tilde{\nabla}L(\{x^t_k\},x^t_0,y^t)\|^2\nonumber\\
  &\le \frac{\sigma_3}{\sigma_2}\left(L(\{x^{t}_k\}, x_0^{t}; y^{t})-L(\{x^{t+1}_k\}, x_0^{t+1}; y^{t+1})\right)\nonumber.
\end{align}
Summing both sides of the above inequality over $t=1,\cdots, r$, we have
\begin{align}
&\sum_{t=1}^{r}\sum_{k=1}^{K}\|x^t_k-x^t_0\|^2 + \|\tilde{\nabla}L(\{x^t_k\},x^t_0,y^t)\|^2\nonumber\\
  &\le \frac{\sigma_3}{\sigma_2}\left(L(\{x^{1}_k\}, x_0^{1}; y^{1})-L(\{x^{r+1}_k\}, x_0^{r+1}; y^{r+1})\right)\nonumber\\
&\le \frac{\sigma_3}{\sigma_2}\left(L(\{x^{1}_k\}, x_0^{1}; y^{1})-\underline{f}\right)\nonumber
\end{align}
where in the last inequality we have used the fact that $L(\{x^{r+1}_k\}, x_0^{r+1}; y^{r+1})$ is decreasing and lower bounded by $\underline{f}$ (cf. Lemmas \ref{lemma:L_difference1}--\ref{lemma:L_bounded1}).

By utilizing the definition of $T(\epsilon)$ and $P(\{x_k^{t}\}, x^t, y^t)$, the above inequality becomes
\begin{align}
&T(\epsilon)\epsilon\le \frac{\sigma_3}{\sigma_2}\left(L(\{x^{1}_k\}, x_0^{1}; y^{1})-\underline{f}\right)
\end{align}
Dividing both sides by $T(\epsilon)$, and by setting $C=\sigma_3/\sigma_2$, the desired result is obtained.
\end{proof}

}

\subsection{The Proximal ADMM}\label{sub:consensus_proximal}
One potential limitation of Algorithms 1 and 2 is the requirement that each subproblem \eqref{eq:x_k_update} needs to be solved {\it exactly}, while in certain practical applications cheap iterations are preferred.
In this section, we consider an important extension of Algorithm 1--2 in which the above restriction is removed.
The main idea is to take a proximal step instead of minimizing the augmented Lagrangian function exactly with respect to each variable block. Like in the previous section, we will analyze a generalized version, termed the {\it flexible} proximal ADMM, where there is more freedom in choosing the update schedules.

%
%\begin{center}
%\fbox{
%\begin{minipage}{5.8in}
%\smallskip\small
%\centerline{\bf Algorithm 3. A Proximal ADMM for the Consensus Problem \eqref{eq:consensus:admm}}
%\smallskip
%At each iteration $t\ge 1$, compute:
%\begin{equation}
%x^{t+1}={\rm arg}\!\min_{x\in X}\; L(\{x^{t(k)}\}, x;  y^{t})=\prox_{\iota(X)+h}\left[\frac{\sum_{k=1}^{K}\rho_k x^{t(k)}+\sum_{k=1}^{K}y^{t(k)}}{\sum_{k=1}^{K}\rho_k}\right].
%\end{equation}
%Each node $k$ computes $x_k$ by solving:
%\begin{align}
%%\nabla f_k(x_k)+y^{t(k)}+\rho_k (x_k-x^{t+1})=0
%x^{t+1}_k=\arg\!\min_{x_k} \; \langle\nabla g_k(x^{t+1}), x_k-x^{t+1}\rangle+\langle y^{t}_k, x_k-x^{t+1}\rangle+\frac{\rho_k+L_k}{2}\|x_k-x^{t+1}\|^2.
%\end{align}
%Update the dual variable:
%\begin{align}
%y^{t+1}_k=y^{t(k)}_{\color{red}k}+\rho_k\left(x^{t+1}_k-x^{t+1}\right).
%\end{align}
%\end{minipage}
%}
%\end{center}

\begin{center}
\fbox{
\begin{minipage}{4.9 in}
\smallskip
\centerline{\bf Algorithm 3. A Flexible Proximal ADMM for Problem \eqref{eq:consensus:admm}}
\smallskip
At each iteration $t+1$, compute:
\begin{align}\label{eq:x_update_prox}
x_0^{t+1}&={\rm arg}\!\min_{x\in X}\; L(\{x_{k}^{t}\}, x_0;  y^{t}).
%\nonumber\\
%&\quad=\prox^{\sum_k\rho_k}_{\iota(X)+h}\left[\frac{\sum_{k=1}^{K}\rho_k x_{k}^{t}+\sum_{k=1}^{K}y_{k}^{t}}{\sum_{k=1}^{K}\rho_k}\right].
\end{align}
Pick a set $\cC^{t+1}\subseteq\{1,\cdots, K\}$.\\
{\bf If $k\in\cC^{t+1}$}, update $x_k$ by solving:
\begin{align}\label{eq:x_k_update_prox}
x^{t+1}_k=\arg\!\min_{x_k} \; \langle\nabla g_k(x_0^{t+1}), x_k-x_0^{t+1}\rangle+\langle y^{t}_k, x_k-x_0^{t+1}\rangle+\frac{\rho_k+L_k}{2}\|x_k-x_0^{t+1}\|^2.
\end{align}
Update the dual variable:
\begin{align}\label{eq:y_update_prox}
y^{t+1}_k=y_k^{t}+\rho_k\left(x^{t+1}_k-x_0^{t+1}\right).
\end{align}
{\bf Else} let $x^{t+1}_k=x_{k}^{t}$, $y^{t+1}_k=y^{t}_k$.
\end{minipage}
}
\end{center}

Notice that the $x_k$ update step is different from the conventional proximal update (e.g., \cite{BoydADMM}). In particular, the linearization is done with respect to $x_0^{t+1}$ instead of $x_k^t$ computed in the previous iteration. This modification is instrumental in the convergence analysis of Algorithm 3.

{Here we use the {\it period-$T$ essentially cyclic rule} to decide the set $\cC^{t+1}$ at each iteration.} We note that there is a slight difference of the update schedule used in Algorithm 3 and Algorithm 2. In Algorithm 3, the block variable $x_0$ is updated {\it in every iteration} while in Algorithm 2 the update of $x_0$ is also governed by block selection rules.

Now we begin analyzing Algorithm 3. %{We will focus on analyzing the case with essentially cyclic rule. The randomized case can be derived similarly.}
We make the following assumptions in this section (in addition to Assumption A1 and A3).

\pn {\bf Assumption B.}
%\begin{itemize}
%\item[B1.] There exists a positive constant $L_k>0$ such that $$\|\nabla_k g_k(x_k)-\nabla_k g_k(z_k)\|\le L_k \|x_k-z_k\|, \; \forall~x_k,z_k, \; k=1,\cdots,K.$$
%Moreover, $h$ is convex (possible nonsmooth); $X$ is a closed convex set.
%\item[B2.]
For all $k$, the penalty parameter $\rho_k$ is chosen large enough such that:
\begin{align}
\alpha_k&:={\frac{\rho_k-7L_k}{2}}-\left(\frac{4L_k}{\rho^2_k}+\frac{1}{\rho_k}\right)2L_k^2>0\label{eq:beta}\\
\beta_k&:=\frac{\rho_k}{2}-T^2\left(\frac{4L_k}{\rho^2_k}+\frac{1}{\rho_k}\right)8L_k^2>0\label{eq:alpha}\\
\rho_k& \ge 5 L_k, \; k=1,\cdots, K.
\end{align}
%\item[B3.] $f(x)$ is bounded from below over $X$.
%\end{itemize}
%{[[removed the discussion on sufficiently large $\rho$]]}.%Comparing Assumption C2. and B2., we no longer require that $g_k(x_k)+\rho_k\|x_k-x\|^2$ can be made strongly convex for large $\rho_k$. Instead, a more complicated choice of $\rho_k$'s is needed.

Again let ${t(k)}$ denote the last iteration that $x_k$ is updated before $t+1$, i.e.,
{\begin{align}
{t(k)}&=\max\; \{r\mid {r\le t, k\in \cC^r}\}, \; k=1,\cdots,K.
\end{align}}
{Note that we do not need $t(0)$ anymore since $x_0$ is updated in every iteration.} Clearly, we have $x_k^t=x_k^{t(k)}$ and as a result, $y_k^t=y_k^{t(k)}$.
We have the following result.
\begin{lemma}\label{lemma:y2}
Suppose Assumption B and Assumptions A1, A3 are satisfied. Then for Algorithm 3, the following is true for the essentially cyclic block selection rule
\begin{align}
&2L^2_k(4\|x_0^{t+1}-x_0^{{t(k)}}\|^2+\|x^{t+1}_k-x_k^{t}\|^2)\ge \|y^{t+1}_k-y^{t}_k\|^2, \; k=1,\cdots,K\label{eq:y_difference2}.
%&{2L^2_k(4\|x_0^{t+1}-x_0^{{t(k)}}\|^2+\|x^{t+1}_k-x_k^{t(k)}\|^2)\ge \|y^{t+1}_k-y^{t(k)}_k\|^2, \; k=1,\cdots,K.}\label{eq:y_difference20}
\end{align}
\end{lemma}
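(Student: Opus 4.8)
The plan is to derive \eqref{eq:y_difference2} by relating the dual difference $y_k^{t+1}-y_k^t$ to a gradient difference, exactly as in Lemma \ref{lemma:y1}, but now accounting for the fact that the $x_k$ update \eqref{eq:x_k_update_prox} is a \emph{linearized} (proximal) step rather than an exact minimization. First I would write down the optimality condition for \eqref{eq:x_k_update_prox} when $k\in\cC^{t+1}$, namely
\begin{align}
\nabla g_k(x_0^{t+1})+y_k^t+(\rho_k+L_k)(x_k^{t+1}-x_0^{t+1})=0,\nonumber
\end{align}
and combine it with the dual update \eqref{eq:y_update_prox}, $y_k^{t+1}=y_k^t+\rho_k(x_k^{t+1}-x_0^{t+1})$, to solve for $y_k^{t+1}$ in terms of the gradient $\nabla g_k(x_0^{t+1})$ and the residual $x_k^{t+1}-x_0^{t+1}$. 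This should yield an expression of the form $y_k^{t+1}=-\nabla g_k(x_0^{t+1})-L_k(x_k^{t+1}-x_0^{t+1})$, which is the proximal analogue of \eqref{eq:y_expression1}; note the crucial point that the gradient is evaluated at $x_0^{t+1}$, not at $x_k^{t+1}$, which is precisely why the linearization is taken about $x_0^{t+1}$ as remarked after Algorithm 3.

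Next I would form the difference $y_k^{t+1}-y_k^t$. Since $y_k$ and $x_k$ are updated together, $y_k^t=y_k^{t(k)}$ and I can write $y_k^{t+1}-y_k^t$ using the expression above at iteration $t+1$ and at iteration $t(k)+1$. The dependence on $x_0$ appears at two different update times, $x_0^{t+1}$ and $x_0^{t(k)+1}$, so the gradient-difference term $\nabla g_k(x_0^{t+1})-\nabla g_k(x_0^{t(k)+1})$ must be controlled via the Lipschitz constant $L_k$ by $L_k\|x_0^{t+1}-x_0^{t(k)+1}\|$, while the residual-difference term contributes $L_k\|(x_k^{t+1}-x_0^{t+1})-(x_k^{t(k)+1}-x_0^{t(k)+1})\|$. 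After collecting terms, using $x_k^{t(k)+1}=x_k^{t+1}$ (the block has not been touched since), and bounding everything by triangle inequalities, I expect the square norm $\|y_k^{t+1}-y_k^t\|^2$ to be dominated by a constant times $L_k^2$ times a sum of $\|x_0^{t+1}-x_0^{t(k)}\|^2$ and $\|x_k^{t+1}-x_k^t\|^2$. The factors $4$ and the coefficient $2L_k^2$ in \eqref{eq:y_difference2} will emerge from applying the elementary inequality $\|a+b\|^2\le 2\|a\|^2+2\|b\|^2$ (possibly iterated) to split the two sources of variation and from the $L_k$ appearing in the proximal coefficient $\rho_k+L_k$.

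The main obstacle I anticipate is the bookkeeping of iteration indices: because $x_0$ is updated every iteration whereas $x_k$ is updated only when $k\in\cC^{t+1}$, the quantity $x_0^{t+1}-x_0^{t(k)+1}$ (which controls the gradient difference) is not the same as the per-iteration successive difference, and I must carefully reconcile the appearance of $x_0^{t(k)}$ in the stated bound \eqref{eq:y_difference2} with the $x_0^{t(k)+1}$ that arises naturally from the optimality conditions. I would handle this by noting that the right-hand side uses $\|x_0^{t+1}-x_0^{t(k)}\|^2$, and that whatever intermediate $x_0$ iterate enters can be absorbed into this term (or bounded by it together with $\|x_k^{t+1}-x_k^t\|^2$) up to the stated constants. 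The case $k\notin\cC^{t+1}$ is trivial since both sides vanish, so the entire content is in the updated case, and the argument is a direct, if index-sensitive, adaptation of the proof of Lemma \ref{lemma:y1}.
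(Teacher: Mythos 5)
Your overall route is exactly the paper's: combine the optimality condition of the linearized step \eqref{eq:x_k_update_prox} with the dual update \eqref{eq:y_update_prox} to get $y_k^{t+1}=-\nabla g_k(x_0^{t+1})-L_k(x_k^{t+1}-x_0^{t+1})$ (this is \eqref{eq:y_expression2}, the proximal analogue of \eqref{eq:y_expression1}), write $y_k^{t+1}-y_k^t$ as the difference of this expression at the current and the previous update of block $k$, and finish with Lipschitz continuity, the triangle inequality, and $(2a+b)^2\le 8a^2+2b^2$, which is where the coefficient $2L_k^2$ and the factor $4$ come from. The trivial case $k\notin\cC^{t+1}$ is also dispatched the same way.

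However, the one point you flag as the ``main obstacle'' is resolved incorrectly, and your proposed patch would not work if it were actually needed. Under the paper's definition $t(k)=\max\{r\mid r\le t,\ k\in\cC^r\}$, block $k$ was last updated \emph{during} iteration $t(k)$, and since within each iteration of Algorithm 3 the variable $x_0$ is refreshed first, that update used the linearization point $x_0^{t(k)}$ --- not $x_0^{t(k)+1}$. Hence the previous dual relation reads $y_k^{t(k)}=-\nabla g_k(x_0^{t(k)})-L_k(x_k^{t(k)}-x_0^{t(k)})$ with $x_k^{t(k)}=x_k^t$ and $y_k^{t(k)}=y_k^t$, the gradient difference is exactly $\nabla g_k(x_0^{t+1})-\nabla g_k(x_0^{t(k)})$, and the bound $\|y_k^{t+1}-y_k^t\|\le 2L_k\|x_0^{t+1}-x_0^{t(k)}\|+L_k\|x_k^{t+1}-x_k^t\|$ drops out with no reconciliation needed; squaring gives \eqref{eq:y_difference2}. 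Had the gradient genuinely sat at $x_0^{t(k)+1}$, your idea of ``absorbing'' the intermediate iterate would fail: the extra term $\|x_0^{t(k)+1}-x_0^{t(k)}\|$ is a successive $x_0$ difference from an \emph{earlier} iteration and is not controlled by $\|x_0^{t+1}-x_0^{t(k)}\|$ or $\|x_k^{t+1}-x_k^t\|$ up to fixed constants (for instance, $x_0$ can move away and return, making the first of these zero while the absorbed term stays positive), so the lemma with the stated constants could not be recovered that way. Separately, note a slip: you assert $x_k^{t(k)+1}=x_k^{t+1}$ ``since the block has not been touched since,'' but what holds is that the block retains its \emph{old} value, i.e.\ it equals $x_k^t$; the quantity $x_k^{t+1}$ is the freshly computed value at the current iteration and is in general different. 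If your identity were used literally, the term $\|x_k^{t+1}-x_k^t\|^2$ would wrongly vanish from the bound.
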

\begin{proof}
Suppose $k\notin \cC^{t+1}$, then the inequality is trivially true, as $y^{t+1}_k=y^{t}_k$.

For any $k\in \cC^{t+1}$, we observe from the update of $x_k$ step \eqref{eq:x_k_update_prox} that the following is true
\begin{align}
\nabla g_k(x^{t+1})+y^{t}_k+(\rho_k+L_k)(x^{t+1}_k-x_0^{t+1})=0, \; k\in\cC^{t+1},
\end{align}
or equivalently
\begin{align}\label{eq:y_expression2}
\nabla g_k(x^{t+1})+L_k(x^{t+1}_k-x_0^{t+1})=-y^{t+1}_k, \; k\in\cC^{t+1}.
\end{align}
Therefore we have, for all $k\in\cC^{t+1}$
\begin{align}
\|y^{t+1}_k-y^{t}_k\|&=\|y^{t+1}_k-y^{{t(k)}}_k\|\nonumber\\
&= \|\nabla g_k(x_0^{t+1})-\nabla g_k(x_0^{{t(k)}})+L_k(x_k^{t+1}-x_0^{t+1})-L_k(x_k^{{t(k)}}-x_0^{{t(k)}})\|\nonumber\\
&=\|\nabla g_k(x_0^{t+1})-\nabla g_k(x_0^{{t(k)}})+L_k(x_k^{t+1}-x_0^{t+1})-L_k(x_k^{t}-x_0^{{t(k)}})\|\nonumber\\
&\le L_k(2\|x_0^{t+1}-x_0^{{t(k)}}\|+\|x_k^{t+1}-x_k^{t}\|)\nonumber
\end{align}
where the last step follows from triangular inequality and the fact $x^t_k=x^{t(k)}_k$ (cf.\ the definition of $t(k)$).
The above result further implies that
\begin{align}
2L^2_k(4\|x_0^{t+1}-x_0^{{t(k)}}\|^2+\|x^{t+1}_k-x_k^{t}\|^2)&\ge \|y^{t+1}_k-y^{t}_k\|^2, \; k=1,\cdots,K
\end{align}
which is the desired result.
\end{proof}

Next, we {upper} bound the successive difference of the augmented Lagrangian. To this end, let us define the following functions
\begin{align}
\ell_k(x_k; x_0^{t+1},y^t)&=g_k(x_k)+\langle y_k^{t}, x_k-x_0^{t+1}\rangle+\frac{\rho_k}{2}\|x_k-x_0^{t+1}\|^2\nonumber\\
u_k(x_k; x_0^{t+1},y^t)&=g_k(x_0^{t+1})+\langle \nabla g_k(x_0^{t+1}), x_k-x_0^{t+1}\rangle\nonumber\\
&\quad +\langle y_k^t, x_k-x_0^{t+1}\rangle+\frac{\rho_k+L_k}{2}\|x_k-x_0^{t+1}\|^2\nonumber.
\end{align}
Using these short-hand definitions, we have
\begin{align}
&L(\{x^{t+1}_k\}, x_0^{t+1}; y^{t})=\sum_{k=1}^{K}\ell_k(x^{t+1}_k; x_0^{t+1}, y^t)\label{eq:L_ell}\\
&x^{t+1}_k=\arg\min_{x_k} u_k(x_k;x_0^{t+1},y^t), \; \forall~k\in\cC^{t+1}.\label{eq:x_k_u_k}
\end{align}

The lemma below bounds the difference between $\ell_k(x^{t+1}_k; x_0^{t+1},y^t)$ and $\ell_k(x_{k}^{t}; x_0^{t+1},y^t)$.
\begin{lemma}\label{lemma:l_difference}
Suppose Assumption A1 is satisfied. Let $\{x^{t}_k,x_0^{t},y^t\}$ be generated by Algorithm 3 with essential cyclic block update rule. Then we have the following
\begin{align}\label{eq:bound_l}
&\ell_k(x^{t+1}_k;x_0^{t+1},y^t)-\ell_k(x^{t}_k;x_0^{t+1},y^t)\nonumber\\
&\le -{\frac{\rho_k-7L_k}{2}}\|x_k^{t+1}-x_k^{t}\|^2+\frac{4L_k}{\rho^2_k}\|y^{t+1}_k-y_{k}^{t}\|^2, \quad k=1,\cdots,K.
\end{align}
\end{lemma}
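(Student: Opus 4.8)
\emph{Proof idea.} The plan is to exploit the fact that the $x_k$-subproblem \eqref{eq:x_k_update_prox} minimizes the \emph{surrogate} $u_k(\cdot\,;x_0^{t+1},y^t)$ defined just above, which majorizes the true block objective $\ell_k(\cdot\,;x_0^{t+1},y^t)$, and then to trade the resulting proximal-point descent against the size of the dual increment via the update rule \eqref{eq:y_update_prox}. First I would dispose of the trivial case $k\notin\cC^{t+1}$: there $x_k^{t+1}=x_k^t$ and $y_k^{t+1}=y_k^t$, so both sides of \eqref{eq:bound_l} vanish. For the rest assume $k\in\cC^{t+1}$.

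The first key step is to record two structural facts about $u_k$. By Assumption A1 (Lipschitz continuity of $\nabla g_k$) and the descent lemma, $g_k(x_k)\le g_k(x_0^{t+1})+\langle\nabla g_k(x_0^{t+1}),x_k-x_0^{t+1}\rangle+\tfrac{L_k}{2}\|x_k-x_0^{t+1}\|^2$ for every $x_k$; comparing with the definitions of $\ell_k$ and $u_k$, this is exactly the statement $u_k(x_k;x_0^{t+1},y^t)\ge\ell_k(x_k;x_0^{t+1},y^t)$ for all $x_k$. Moreover $u_k$ is a quadratic with Hessian $(\rho_k+L_k)I$, hence strongly convex with modulus $\rho_k+L_k$, and by \eqref{eq:x_k_u_k} its unique minimizer is $x_k^{t+1}$; strong convexity then yields $u_k(x_k^t)-u_k(x_k^{t+1})\ge\tfrac{\rho_k+L_k}{2}\|x_k^{t+1}-x_k^t\|^2$.

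With these in hand I would decompose the target difference (suppressing the fixed arguments $x_0^{t+1},y^t$) as
\begin{align}
\ell_k(x_k^{t+1})-\ell_k(x_k^t)
&=\big[\ell_k(x_k^{t+1})-u_k(x_k^{t+1})\big]+\big[u_k(x_k^{t+1})-u_k(x_k^t)\big]+\big[u_k(x_k^t)-\ell_k(x_k^t)\big].\nonumber
\end{align}
The first bracket is $\le 0$ by majorization, the second is $\le-\tfrac{\rho_k+L_k}{2}\|x_k^{t+1}-x_k^t\|^2$ by the descent estimate, and the third is the majorization gap at $x_k^t$, which another application of the descent lemma bounds by $L_k\|x_k^t-x_0^{t+1}\|^2$. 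Collecting these,
\begin{align}
\ell_k(x_k^{t+1})-\ell_k(x_k^t)\le L_k\|x_k^t-x_0^{t+1}\|^2-\tfrac{\rho_k+L_k}{2}\|x_k^{t+1}-x_k^t\|^2.\nonumber
\end{align}

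The last step, which produces the dual term, is to re-express $\|x_k^t-x_0^{t+1}\|^2$. The dual update \eqref{eq:y_update_prox} gives $x_k^{t+1}-x_0^{t+1}=\tfrac{1}{\rho_k}(y_k^{t+1}-y_k^t)$, so $x_k^t-x_0^{t+1}=-(x_k^{t+1}-x_k^t)+\tfrac{1}{\rho_k}(y_k^{t+1}-y_k^t)$; applying $\|a+b\|^2\le 2\|a\|^2+2\|b\|^2$ bounds $\|x_k^t-x_0^{t+1}\|^2$ by $2\|x_k^{t+1}-x_k^t\|^2+\tfrac{2}{\rho_k^2}\|y_k^{t+1}-y_k^t\|^2$. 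Substituting and collecting the coefficient of $\|x_k^{t+1}-x_k^t\|^2$ gives the claim; in fact this route produces the slightly sharper constants $-\tfrac{\rho_k-3L_k}{2}$ and $\tfrac{2L_k}{\rho_k^2}$, and \eqref{eq:bound_l} follows a fortiori since $-\tfrac{\rho_k-3L_k}{2}\le-\tfrac{\rho_k-7L_k}{2}$ and $\tfrac{2L_k}{\rho_k^2}\le\tfrac{4L_k}{\rho_k^2}$. I expect the only delicate point to be the bookkeeping in bounding the majorization gap $u_k(x_k^t)-\ell_k(x_k^t)$ and then converting the cross term $\|x_k^t-x_0^{t+1}\|^2$ into the prescribed combination of primal and dual increments; everything else is a routine majorization–minimization descent argument.
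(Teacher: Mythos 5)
Your proposal is correct and follows essentially the same route as the paper's proof: the trivial case $k\notin\cC^{t+1}$, the majorization $\ell_k\le u_k$, the strong-convexity descent of $u_k$ at its minimizer $x_k^{t+1}$, a bound on the majorization gap $u_k(x_k^t)-\ell_k(x_k^t)$, and the substitution $x_k^{t+1}-x_0^{t+1}=\tfrac{1}{\rho_k}(y_k^{t+1}-y_k^t)$ from \eqref{eq:y_update_prox}. Your bookkeeping is in fact slightly sharper — using the two-sided descent lemma gives the gap bound $L_k\|x_k^t-x_0^{t+1}\|^2$ rather than the paper's $2L_k\|x_k^t-x_0^{t+1}\|^2$, yielding constants $-\tfrac{\rho_k-3L_k}{2}$ and $\tfrac{2L_k}{\rho_k^2}$ — and the stated bound \eqref{eq:bound_l} indeed follows a fortiori.
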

\begin{proof}
When $k\notin \cC^{t+1}$, the inequality is trivially true. We focus on the case $k\in\cC^{t+1}$.
From the definition of $\ell_k(\cdot)$ and $u_k(\cdot)$ we have the following
\begin{align}\label{eq:LU}
\ell_k(x_k;x_0^{t+1},y^t)\le u_k(x_k;x_0^{t+1},y^t), \; \forall~x_k,\; k=1,\cdots,K.
\end{align}
Observe that when $k\in\cC^{t+1}$, $x^{t+1}_k$ is generated according to \eqref{eq:x_k_u_k}. Due to the strong convexity of $u_k(x_k;x_0^{t+1},y^t)$ with respect to $x_k$, we have
\begin{align}\label{eq:u_k_difference}
u_k(x^{t+1}_k;x_0^{t+1},y^t)-u_k(x^{t}_{k};x_0^{t+1},y^t)\le -{\frac{\rho_k+L_k}{2}}\|x^{t}_{k}-x^{t+1}_k\|^2, \; \forall~k\in\cC^{t+1}.
\end{align}
Further, we have the following series of inequalities
\begin{align}\label{eq:u_k_l_k}
&u_k(x^{t}_{k};x_0^{t+1},y^t)-\ell_k(x^{t}_{k};x_0^{t+1},y^t)\nonumber\\
&=g_k(x_0^{t+1})+\langle \nabla g_k(x_0^{t+1}), x^{t}_{k}-x_0^{t+1}\rangle+\langle y_k^t, x^{t}_{k}-x_0^{t+1}\rangle+\frac{\rho_k+L_k}{2}\|x^{t}_{k}-x_0^{t+1}\|^2\nonumber\\
&\quad \quad -\left(g_k(x^{t}_{k})+\langle y^{t}_{k}, x^{t}_{k}-x_0^{t+1}\rangle+\frac{\rho_k}{2}\|x^{t}_{k}-x_0^{t+1}\|^2\right)\nonumber\\
&=g_k(x_0^{t+1})-g_k(x^{t}_{k})+\langle \nabla g_k(x_0^{t+1}), x^{t}_{k}-x_0^{t+1}\rangle+\frac{L_k}{2}\|x^{t}_{k}-x_0^{t+1}\|^2\nonumber\\
&\le\langle \nabla g_k(x_0^{t+1})-\nabla g_k(x_k^{t}), x^{t}_{k}-x_0^{t+1}\rangle+L_k\|x^{t}_{k}-x_0^{t+1}\|^2\nonumber\\
&\le2L_k\|x^{t}_{k}-x_0^{t+1}\|^2\le 4L_k\left(\|x^{t}_{k}-x^{t+1}_k\|^2+\|x^{t+1}_k-x_0^{t+1}\|^2\right),
\end{align}
where the first two inequalities follow from Assumption A1.
Combining {\eqref{eq:LU} -- \eqref{eq:u_k_l_k}} %and using $x^{t(k)}_k=x^t_k$ when $k\in\cC^{t+1}$,
we obtain
\begin{align}
&\ell_k(x^{t+1}_k; x_0^{t+1},y^t)-\ell_k(x^{t}_k; x_0^{t+1},y^t)\nonumber\\
&\le u_k(x^{t+1}_k; x_0^{t+1},y^t)-u_k(x_k^{t}; x_0^{t+1},y^t)+u_k(x_k^{t}; x_0^{t+1},y^t)-\ell_k(x^{t}_k; x_0^{t+1},y^t)\nonumber\\
&\le-{\frac{\rho_k-7L_k}{2}}\|x_k^{t}-x_k^{t+1}\|^2+4L_k \|x^{t+1}_k-x_0^{t+1}\|^2\nonumber\\
&=-{\frac{\rho_k-7L_k}{2}}\|x_k^{t}-x_k^{t+1}\|^2+\frac{4L_k}{\rho_k^2} \|y_k^{t+1}-y_k^{t}\|^2, \; \forall~k\in\cC^{t+1}.\nonumber
\end{align}
The desired result then follows.
\end{proof}

Next, we bound the difference of the augmented Lagrangian function values.
\begin{lemma}\label{lemma:L_difference2}
Assume the same set up as in Lemma~\ref{lemma:l_difference}. Then we have
\begin{align}\label{eq:L_descent2}
&L(\{x^{t+1}_k\}, x_0^{t+1}; y^{t+1})-L(\{x^{1}_k\}, x_0^{1}; y^{1})\nonumber\\
&\le-\sum_{i=1}^{t}\sum_{k=1}^{K}\alpha_k\|x^{i+1}_k-x^i_k\|^2-\sum_{i=1}^{t}\sum_{k=1}^{K}\beta_k\|x_0^{i+1}-x_0^i\|^2
\end{align}
where we $\beta_k$ and $\alpha_k$ are the positive constants defined in \eqref{eq:beta} and \eqref{eq:alpha}.
\end{lemma}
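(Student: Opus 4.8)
The plan is to establish a one-step descent estimate for the augmented Lagrangian and then sum it over iterations. Following the three-way decomposition used in Lemma~\ref{lemma:L_difference1}, I would split the one-step difference $L(\{x^{t+1}_k\}, x_0^{t+1}; y^{t+1})-L(\{x^{t}_k\}, x_0^{t}; y^{t})$ into the change due to the dual update (holding the primal fixed at its new value), the change in the $x_k$-blocks (holding $x_0$ at $x_0^{t+1}$ and $y$ at $y^t$), and the change in $x_0$ (holding the $x_k$ at $x_k^t$ and $y$ at $y^t$). The dual piece equals $\sum_{k\in\cC^{t+1}}\frac{1}{\rho_k}\|y_k^{t+1}-y_k^{t}\|^2$ exactly, using $x^{t+1}_k-x_0^{t+1}=\frac{1}{\rho_k}(y^{t+1}_k-y^t_k)$ from \eqref{eq:y_update_prox} together with $y_k^{t+1}=y_k^t$ off $\cC^{t+1}$. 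The $x_k$-piece is controlled by Lemma~\ref{lemma:l_difference}, since $L(\{x_k\}, x_0^{t+1}; y^t)=h(x_0^{t+1})+\sum_k \ell_k(x_k;x_0^{t+1},y^t)$ and only the $\ell_k$ terms move as the $x_k$-blocks change. The $x_0$-piece is bounded by $-\frac{\gamma}{2}\|x_0^{t+1}-x_0^t\|^2$, using that $x_0^{t+1}$ exactly minimizes $L(\{x_k^t\},x_0;y^t)$ over $X$ (cf. \eqref{eq:x_update_prox}) and that $L$ is strongly convex in $x_0$ with modulus $\gamma=\sum_k\rho_k$.

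Adding the three pieces, the $\|y^{t+1}_k-y^t_k\|^2$ contributions combine with coefficient $\frac{4L_k}{\rho_k^2}+\frac{1}{\rho_k}$, where the sum over $\cC^{t+1}$ may be extended to all $k$ because $y_k^{t+1}-y_k^t$ vanishes off $\cC^{t+1}$. I would then invoke Lemma~\ref{lemma:y2} to replace each $\|y^{t+1}_k-y^t_k\|^2$ by $2L_k^2\big(4\|x_0^{t+1}-x_0^{t(k)}\|^2+\|x^{t+1}_k-x_k^t\|^2\big)$. Collecting the $\|x^{t+1}_k-x_k^t\|^2$ terms yields precisely the coefficient $\alpha_k$ of \eqref{eq:beta}, so that the per-iteration bound becomes $-\sum_k\alpha_k\|x^{t+1}_k-x_k^t\|^2-\frac{\gamma}{2}\|x_0^{t+1}-x_0^t\|^2+\sum_k\big(\frac{4L_k}{\rho_k^2}+\frac{1}{\rho_k}\big)8L_k^2\|x_0^{t+1}-x_0^{t(k)}\|^2$.

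The main obstacle is the residual look-back term $\|x_0^{t+1}-x_0^{t(k)}\|^2$, which couples the current iterate to an $x_0$-value up to $T$ iterations earlier and is not yet of the clean consecutive-difference form appearing in the claim. To handle it, I would exploit that $x_0$ is updated in \emph{every} iteration of Algorithm~3, so $x_0^{t+1}-x_0^{t(k)}=\sum_{j=t(k)}^{t}(x_0^{j+1}-x_0^j)$, and that the period-$T$ essentially cyclic rule forces $t+1-t(k)\le T$. Cauchy--Schwarz then gives $\|x_0^{t+1}-x_0^{t(k)}\|^2\le T\sum_{j=t(k)}^{t}\|x_0^{j+1}-x_0^j\|^2$. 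Substituting this and summing over iterations $i=1,\dots,t$ telescopes the left-hand side to $L(\{x^{t+1}_k\}, x_0^{t+1}; y^{t+1})-L(\{x^{1}_k\}, x_0^{1}; y^{1})$.

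The delicate point occurs in this summation: each single difference $\|x_0^{j+1}-x_0^j\|^2$ is counted once for every iteration index $i$ whose window $[i(k),i]$ contains $j$, and the period-$T$ structure guarantees there are at most $T$ such indices $i$ (namely $i\in\{j,\dots,j+T-1\}$). This second application of the window length, combined with the first factor from Cauchy--Schwarz, produces the factor $T^2$ and shows that the accumulated coefficient of $\|x_0^{j+1}-x_0^j\|^2$ is $\sum_k\big(\frac{4L_k}{\rho_k^2}+\frac{1}{\rho_k}\big)8L_k^2\,T^2-\frac{\gamma}{2}=-\sum_k\beta_k$, with $\beta_k$ as in \eqref{eq:alpha}. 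This reassembles into exactly the claimed inequality, and the positivity of $\alpha_k,\beta_k$ is guaranteed by Assumption~B. I expect the bookkeeping of this double sum — in particular justifying the at-most-$T$ overlap count that yields $T^2$ — to be the only genuinely subtle step; everything else is a reassembly of Lemmas~\ref{lemma:y2} and \ref{lemma:l_difference} with the strong convexity of $L$ in $x_0$.
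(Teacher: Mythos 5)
Your proposal is correct and follows essentially the same route as the paper's proof: the same splitting of the one-step difference into dual, $x_k$, and $x_0$ pieces, the same use of Lemmas~\ref{lemma:y2} and~\ref{lemma:l_difference} plus strong convexity in $x_0$, and the same Cauchy--Schwarz-plus-overlap-count argument that turns the look-back term $\|x_0^{t+1}-x_0^{t(k)}\|^2$ into consecutive differences with the $T^2$ factor appearing in $\beta_k$. The double-counting step you flag as delicate is handled in exactly this way in the paper, so no gap remains.
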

%We first split the successive difference of the augmented Lagrangian by
%\begin{align}\label{eq:successive_L1}
%&L(\{x^{t+1}_k\}, x^{t+1}; y^{t+1})-L(\{x^{t}_k\}, x^{t}; y^{t})\nonumber\\
%&=\left(L(\{x^{t+1}_k\}, x^{t+1}; y^{t+1})-L(\{x^{t+1}_k\}, x^{t+1}; y^{t})\right)+\left(L(\{x^{t+1}_k\}, x^{t+1}; y^{t})-L(\{x^{t}_k\}, x^{t}; y^{t})\right)
%\end{align}
\begin{proof}
We first bound the successive difference $L(\{x^{t+1}_k\}, x_0^{t+1}; y^{t+1})-L(\{x^{t}_k\}, x_0^{t}; y^{t})$. Again we decompose it as in \eqref{eq:successive_L1}, and bound the {resulting} two differences separately.

The first term in \eqref{eq:successive_L1} can be again expressed as
\begin{align}
&L(\{x^{t+1}_k\}, x_0^{t+1}; y^{t+1})-L(\{x^{t+1}_k\}, x_0^{t+1}; y^{t})=\sum_{k=1}^{K}\frac{1}{\rho_k}\|y_k^{t+1}-y_k^{t}\|^2\nonumber.
\end{align}
To bound the second term in \eqref{eq:successive_L1}, we use Lemma \ref{lemma:l_difference}. We use an argument similar to the proof of \eqref{eq:extra1} to obtain
\begin{align}
&L(\{x^{t+1}_k\}, x_0^{t+1}; y^{t})-L(\{x^{t}_k\}, x_0^{t}; y^{t})\nonumber\\
&=L(\{x^{t+1}_k\}, x_0^{t+1}; y^{t})-L(\{x^{t}_k\}, x_0^{t+1}; y^{t})+L(\{x^{t}_k\}, x_0^{t+1}; y^{t})-L(\{x^{t}_k\}, x_0^{t}; y^{t})\nonumber\\
&= \sum_{k=1}^{K}\left(\ell_k(x^{t+1}_k;x_0^{t+1},y^{t})-\ell_k(x^{t}_k;x_0^{t+1},y^{t})\right)+L(\{x^{t}_k\}, x_0^{t+1}; y^{t})-L(\{x^{t}_k\}, x_0^{t}; y^{t})\nonumber\\
&\le -\sum_{k=1}^{K}\left({\frac{\rho_k-7L_k}{2}}\|x^{t+1}_k-x^{t}_{k}\|^2{-}\frac{4L_k}{\rho^2_k}\|y^{t+1}_k-y_{k}^{t}\|^2\right)-\frac{\gamma}{2}\|x_0^{t+1}-x_0^t\|^2
\end{align}
where the last inequality follows from Lemma~\ref{lemma:l_difference} and the strong convexity of $L(\{x^{t}_k\}, x_0; y^{t})$ with respect to the variable $x$ (with modulus $\gamma=\sum_{k=1}^{K}\rho_k$)  at $x_0=x_0^{t+1}$.

Combining the above two inequalities, we obtain
\begin{align}\label{eq:L_difference2}
&L(\{x^{t+1}_k\}, x_0^{t+1}; y^{t+1})-L(\{x^{t}_k\}, x_0^{t}; y^{t})\nonumber\\
&\le \sum_{k=1}^{K}\left(-{\frac{\rho_k-7L_k}{2}}\|x^{t+1}_k-x^{t}_{k}\|^2+
\left(\frac{4L_k}{\rho^2_k}+\frac{1}{\rho_k}\right)\|y^{t+1}_k-y_{k}^{t}\|^2\right)-\frac{\gamma}{2}\|x_0^{t+1}-x_0^t\|^2\nonumber\\
&\stackrel{\rm (a)}\le \sum_{k=1}^{K}\left(-{\frac{\rho_k-7L_k}{2}}\|x^{t+1}_k-x^{t}_{k}\|^2+
\left(\frac{4L_k}{\rho^2_k}+\frac{1}{\rho_k}\right)2L^2_k(4\|x_0^{t+1}-x_0^{{t(k)}}\|^2+\|x^{t+1}_k-x_k^{t}\|^2)\right)\nonumber\\
&\quad\quad-\frac{\gamma}{2}\|x_0^{t+1}-x_0^t\|^2\nonumber\\
&\stackrel{\rm(b)}= -\sum_{k=1}^{K}\left({\frac{\rho_k-7L_k}{2}}-\left(\frac{4L_k}{\rho^2_k}+\frac{1}{\rho_k}\right)2L_k^2\right)\|x^{t+1}_k-x^{t}_{k}\|^2
-\sum_{k=1}^{K}\left(\frac{\rho_k}{2}\right)\|x_0^{t+1}-x_0^t\|^2\nonumber\\
&\quad\quad\quad \quad +\sum_{k=1}^{K}\left(\frac{4L_k}{\rho^2_k}+\frac{1}{\rho_k}\right)8L_k^2\|x_0^{{t(k)}}-x_0^{t+1}\|^2\nonumber\\
&\le -\sum_{k=1}^{K}\left({\frac{\rho_k-7L_k}{2}}-\left(\frac{4L_k}{\rho^2_k}+\frac{1}{\rho_k}\right)2L_k^2\right)\|x^{t+1}_k-x^{t}_{k}\|^2
-\sum_{k=1}^{K}\left(\frac{\rho_k}{2}\right)\|x_0^{t+1}-x_0^t\|^2\nonumber\\
&\quad\quad\quad \quad +\sum_{k=1}^{K}T\left(\frac{4L_k}{\rho^2_k}+\frac{1}{\rho_k}\right)8L_k^2\sum_{i=0}^{\min\{T-1,t-1\}}\|x_0^{t-i+1}-x_0^{t-i}\|^2
\end{align}
where in $\rm (a)$ we have used \eqref{eq:y_difference2}; in $\rm (b)$ we have used the fact that $\gamma=\sum_{k=1}^{K}\rho_k$; in the last inequality we have used the definition of the period-$T$ essentially cyclic update rule which implies that
\begin{align}\nonumber
&\|x_0^{t+1}-x_0^{{t(k)}}\|\le \sum_{i=0}^{\min\{T-1,t-1\}}\|x_0^{t-i+1}-x_0^{t-i}\|\nonumber\\
&\Longrightarrow \|x_0^{t+1}-x_0^{{t(k)}}\|^2\le T\sum_{i=0}^{\min\{T-1,t-1\}}\|x_0^{t-i+1}-x_0^{t-i}\|^2.
\end{align}
Then for any given $t$, the difference $L(\{x^{t+1}_k\}, x_0^{t+1}; y^{t+1})-L(\{x^{1}_k\}, x_0^{1}; y^{1})$ is obtained by summing \eqref{eq:L_difference2} {over all} iterations. Specifically, we obtain
\begin{align}
&L(\{x^{t+1}_k\}, x_0^{t+1}; y^{t+1})-L(\{x^{1}_k\}, x_0^{1}; y^{1})\nonumber\\
&\le -\sum_{i=1}^{t}\sum_{k=1}^{K}\left({\frac{\rho_k-7L_k}{2}}-\left(\frac{4L_k}{\rho^2_k}+\frac{1}{\rho_k}\right)2L_k^2\right)\|x^{i+1}_k-x^i_k\|^2
\nonumber\\
&\quad\quad\quad \quad -\sum_{i=1}^{t}\sum_{k=1}^{K}\left(\frac{\rho_k}{2}-T^2\left(\frac{4L_k}{\rho^2_k}+\frac{1}{\rho_k}\right)8L_k^2\right)\|x_0^{i+1}-x_0^i\|^2\nonumber\\
&\:= -\sum_{i=1}^{t}\sum_{k=1}^{K}\alpha_k \|x^{i+1}_k-x^i_k\|^2
-\sum_{i=1}^{t}\sum_{k=1}^{K}\beta_k \|x_0^{i+1}-x_0^i\|^2\nonumber.
\end{align}
This completes the proof.
\end{proof}

We conclude that to make the rhs of \eqref{eq:L_descent2} negative at each iteration, it is sufficient to require that  $\alpha_k>0$ and $\beta_k>0$ for all $k$, or more specifically:
\begin{align}\label{eq:rho_condition2}
\begin{split}
&{\frac{\rho_k-7L_k}{2}}-\left(\frac{4L_k}{\rho^2_k}+\frac{1}{\rho_k}\right)2L_k^2>0,\; k=1,\cdots, K,\\
&\frac{\rho_k}{2}-T^2\left(\frac{4L_k}{\rho^2_k}+\frac{1}{\rho_k}\right)8L_k^2>0,\; k=1,\cdots, K.
\end{split}
\end{align}
Note that one can always find a set of $\rho_k$'s large enough such that the above condition is satisfied. %Importantly, such $\rho_k$ could be decided by each agent $k$ alone, using only local information $L_k$ that is related to the nonconvex function $g_k$.

Next we show that $L(\{x^{t}_k\}, x_0^{t}; y^{t})$ is convergent.
\begin{lemma}\label{lemma:L_bounded2}
{Suppose Assumption A1, A3 and Assumption B are} satisfied. Then Algorithm 3 with period-$T$ essentially cyclic update rule generates a sequence of augmented Lagrangian, {whose limit exists and is bounded below by $\underline{f}$.}
\end{lemma}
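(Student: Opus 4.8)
The plan is to mirror the proof of Lemma~\ref{lemma:L_bounded1}: first establish a pointwise lower bound on the augmented Lagrangian of the form $L(\{x^{t+1}_k\}, x_0^{t+1}; y^{t+1}) \geq \underline{f} - (\text{error})$, where the error is controlled by recent squared successive differences of $x_0$, and then combine this with the monotone decrease already furnished by Lemma~\ref{lemma:L_difference2} to deduce that the sequence converges, with limit no smaller than $\underline{f}$.

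For the lower bound I would start, exactly as in \eqref{eq:L_lower_bound1}, by writing $L$ as $h(x_0^{t+1}) + \sum_k\big(g_k(x^{t+1}_k) + \langle y^{t+1}_k, x^{t+1}_k - x_0^{t+1}\rangle + \tfrac{\rho_k}{2}\|x^{t+1}_k - x_0^{t+1}\|^2\big)$ and use the proximal optimality condition \eqref{eq:y_expression2} to eliminate the dual variable. For a block updated at iteration $t+1$, substituting $y^{t+1}_k = -\nabla g_k(x_0^{t+1}) - L_k(x^{t+1}_k - x_0^{t+1})$ and invoking the descent-lemma lower bound $g_k(x^{t+1}_k) \geq g_k(x_0^{t+1}) + \langle \nabla g_k(x_0^{t+1}), x^{t+1}_k - x_0^{t+1}\rangle - \tfrac{L_k}{2}\|x^{t+1}_k - x_0^{t+1}\|^2$ shows the $k$-th summand is at least $g_k(x_0^{t+1}) + \tfrac{\rho_k - 3L_k}{2}\|x^{t+1}_k - x_0^{t+1}\|^2 \geq g_k(x_0^{t+1})$, since $\rho_k \geq 5L_k$ by Assumption~B. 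The delicate case is a block with $k\notin\cC^{t+1}$: here $x^{t+1}_k = x^{t(k)}_k$ and $y^{t+1}_k = y^{t(k)}_k$, so the optimality condition references the \emph{stale} point $x_0^{t(k)}$ rather than $x_0^{t+1}$. After substitution one is left with cross terms coupling $x_0^{t(k)}$ and $x_0^{t+1}$; bounding these by the triangle inequality and absorbing them into the positive quadratic $\tfrac{\rho_k}{2}\|x^{t(k)}_k - x_0^{t+1}\|^2$ (again using $\rho_k \geq 5L_k$) leaves only the drift error, so that altogether $L(\{x^{t+1}_k\}, x_0^{t+1}; y^{t+1}) \geq f(x_0^{t+1}) - \tfrac{5}{2}\sum_k L_k\|x_0^{t+1} - x_0^{t(k)}\|^2$. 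Since the period-$T$ rule guarantees $t+1 - t(k) \leq T$, the drift telescopes as $\|x_0^{t+1} - x_0^{t(k)}\|^2 \leq T\sum_{i=0}^{T-1}\|x_0^{t-i+1} - x_0^{t-i}\|^2$, and using $f(x_0^{t+1})\geq\underline{f}$ we obtain $L(\{x^{t+1}_k\}, x_0^{t+1}; y^{t+1}) \geq \underline{f} - \tfrac{5T}{2}\big(\sum_k L_k\big)\sum_{i=0}^{T-1}\|x_0^{t-i+1} - x_0^{t-i}\|^2$.

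It remains to combine this with Lemma~\ref{lemma:L_difference2}. The error above is a fixed-length window of squared successive $x_0$-differences, which is precisely the quantity whose total mass is dominated, through the coefficients $\beta_k > 0$, by $L(\{x^1_k\}, x_0^1; y^1) - L(\{x^{t+1}_k\}, x_0^{t+1}; y^{t+1})$. I would therefore introduce a memory-augmented potential $\Phi^t := L(\{x^{t}_k\}, x_0^{t}; y^{t}) + c\sum_{i=1}^{T-1}(T-i)\|x_0^{t-i+1} - x_0^{t-i}\|^2$ with $c$ read off from the constants in \eqref{eq:L_difference2}; a direct computation shows $\Phi^t$ is monotonically nonincreasing and that the window error is controlled by $\Phi$, which together with the pointwise bound rules out divergence to $-\infty$ and yields a finite lower bound on $L$. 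Monotonicity plus lower-boundedness gives convergence of $L$; the summability of the successive differences then forces $\|x_0^{t+1} - x_0^t\|\to 0$, so the error term vanishes, and passing to the limit in the pointwise bound gives $\lim_t L \geq \underline{f}$. The main obstacle is exactly the non-updated blocks: unlike the exact-minimization setting of Lemma~\ref{lemma:L_bounded1}, where $\nabla g_k(x^{t+1}_k) = -y^{t+1}_k$ is free of $x_0$ and the bound $L\geq f(x_0^{t+1})$ holds with no slack, the proximal update ties $y_k$ to a stale $x_0^{t(k)}$, and one must show the resulting drift error is both absorbable into the $\rho_k$-quadratic and asymptotically negligible, thereby breaking the apparent circularity that a lower bound seems to require the differences to be small while their smallness seems to require a lower bound.
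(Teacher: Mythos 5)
Your core strategy is the same as the paper's: expand $L(\{x^{t+1}_k\},x_0^{t+1};y^{t+1})$, eliminate each dual variable through the proximal optimality relation \eqref{eq:y_expression2}, apply the descent lemma to pass from $g_k(x^{t+1}_k)$ to $g_k(x_0^{t+1})$, and then combine the resulting pointwise lower bound with the decrease estimate of Lemma~\ref{lemma:L_difference2}. For blocks $k\in\cC^{t+1}$ your computation is correct, and in fact slightly sharper than the paper's: you retain $\frac{\rho_k-3L_k}{2}\|x^{t+1}_k-x_0^{t+1}\|^2$ where the paper keeps $\frac{\rho_k-5L_k}{2}\|x^{t+1}_k-x_0^{t+1}\|^2$; both are nonnegative under $\rho_k\ge 5L_k$.

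The divergence is in the treatment of blocks with $k\notin\cC^{t+1}$. The paper's own proof does not treat them at all: in step $\rm(a)$ of \eqref{eq:L_lower_bound2} it substitutes $y^{t+1}_k=-\nabla g_k(x_0^{t+1})-L_k(x^{t+1}_k-x_0^{t+1})$ for \emph{every} $k=1,\cdots,K$, even though \eqref{eq:y_expression2} provides this identity only for $k\in\cC^{t+1}$; for a stale block the dual variable is pinned to $x_0^{t(k)}$, exactly as you observe. The paper thereby obtains the clean bound $L^{t+1}\ge f(x_0^{t+1})\ge \underline{f}$ with no drift term and concludes in one line; that argument is airtight when every block is updated at every iteration, but for a genuine period-$T$ rule it silently skips the staleness issue you identified. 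On this point your proposal is more careful than the paper.

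However, your repair does not close under Assumption B as stated, and it fails precisely at the step you assert rather than prove (``a direct computation shows\dots''). Let $d:=\sum_k T\left(\frac{4L_k}{\rho_k^2}+\frac{1}{\rho_k}\right)8L_k^2$ denote the window coefficient in \eqref{eq:L_difference2}, so that $\sum_k\beta_k=\sum_k\frac{\rho_k}{2}-Td$. Making your potential $\Phi^t$ nonincreasing forces its lag weights to decrease by at least $d$ per lag with the largest weight at most $\sum_k\frac{\rho_k}{2}-d$; making $\Phi^t$ bounded below (so that it dominates the drift term of order $T\left(\sum_kL_k\right)\sum_{i=1}^{T}\|x_0^{t-i+1}-x_0^{t-i}\|^2$ in your pointwise bound, including the oldest lag $i=T$) forces every weight to be at least of order $T\sum_kL_k$. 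Chaining these two requirements yields $\sum_k\beta_k\gtrsim T\sum_kL_k$, which $\beta_k>0$ does not imply: with $K=1$, $L_1=1$, $T=10$, $\rho_1=42$, one checks $\alpha_1>0$, $\beta_1\approx 0.14>0$ and $\rho_1\ge 5L_1$, yet $TL_1=10\gg\beta_1$. The same obstruction appears if one instead tries to absorb the drift directly into the cumulative bound of Lemma~\ref{lemma:L_difference2}. So as sketched, your final combination step is a genuine gap: one must either sharpen the drift bound, strengthen Assumption B (e.g., require $\sum_k\beta_k\ge T\sum_k L_k$ up to the constant in your drift estimate), or follow the paper in asserting \eqref{eq:y_expression2} for all blocks, which is only legitimate when $\cC^{t+1}=\{1,\cdots,K\}$ at every iteration.
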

\begin{proof}
Observe that the augmented Lagrangian can be expressed as
\begin{align}\label{eq:L_lower_bound2}
&L(\{x^{t+1}_k\}, x_0^{t+1}; y^{t+1})\nonumber\\
&=h(x_0^{t+1})+\sum_{k=1}^{K}\left(g_k(x^{t+1}_k)+\langle y^{t+1}_k, x^{t+1}_k-x_0^{t+1}\rangle+\frac{\rho_k}{2}\|x^{t+1}_k-x_0^{t+1}\|^2\right)\nonumber\\
&\stackrel{\rm (a)}=h(x_0^{t+1})+\sum_{k=1}^{K}\left(g_k(x^{t+1}_k)+\langle \nabla g_k(x_0^{t+1})+L_k(x^{t+1}_k-x_0^{t+1}), x_0^{t+1}-x_k^{t+1}\rangle+\frac{\rho_k}{2}\|x^{t+1}_k-x_0^{t+1}\|^2\right)\nonumber\\
&=h(x_0^{t+1})+\sum_{k=1}^{K}\left(g_k(x^{t+1}_k)+\langle \nabla g_k(x_0^{t+1}), x_0^{t+1}-x_k^{t+1}\rangle+\frac{\rho_k-2L_k}{2}\|x^{t+1}_k-x_0^{t+1}\|^2\right)\nonumber\\
&\stackrel{\rm (b)}\ge h(x_0^{t+1})+\sum_{k=1}^{K}\left(g_k(x_0^{t+1})+\frac{\rho_k-5L_k}{2}\|x^{t+1}_k-x_0^{t+1}\|^2\right)\nonumber\\
&= f(x_0^{t+1})+\sum_{k=1}^{K}\frac{\rho_k-5L_k}{2}\|x^{t+1}_k-x_0^{t+1}\|^2
\end{align}
where $\rm (a)$ is from \eqref{eq:y_expression2}; $\rm (b)$ is due to the following inequalities
\begin{align}
g_k(x_0^{t+1})&\le g_k(x^{t+1}_k)+\langle\nabla g_k(x^{t+1}_k), x_0^{t+1}-x^{t+1}_k\rangle+\frac{L_k}{2}\|x^{t+1}_k-x_0^{t+1}\|^2\nonumber\\
&= g_k(x^{t+1}_k)+\langle\nabla g_k(x^{t+1}_k)-\nabla g_k(x_0^{t+1}), x_0^{t+1}-x^{t+1}_k\rangle\nonumber\\
&\quad \quad +\langle\nabla g_k(x_0^{t+1}), x_0^{t+1}-x^{t+1}_k\rangle+\frac{L_k}{2}\|x^{t+1}_k-x_0^{t+1}\|^2\nonumber\\
&\le g_k(x^{t+1}_k)+\langle\nabla g_k(x_0^{t+1}), x_0^{t+1}-x^{t+1}_k\rangle+\frac{3L_k}{2}\|x^{t+1}_k-x_0^{t+1}\|^2.\nonumber
\end{align}
Clearly, combining the inequality \eqref{eq:L_lower_bound2} with Assumptions B and A3 yields that $L(\{x^{t+1}_k\}, x_0^{t+1}; y^{t+1})$ is lower bounded. It follows from Lemma \ref{lemma:L_difference2} that whenever the penalty parameter $\rho_k$'s are chosen sufficiently large (as per Assumption B), $L(\{x^{t+1}_k\}, x_0^{t+1}; y^{t+1})$ will monotonically decrease and is convergent. This completes the proof.
\end{proof}

Using Lemmas \ref{lemma:y2}--\ref{lemma:L_bounded2}, we arrive at the following convergence result. The proof is similar to Theorem \ref{thm:convergence1}, and is thus omitted.
\begin{theorem}\label{thm:convergence2}
{Suppose that Assumptions A1, A3 and B hold. Then the following is true for Algorithm 3.
\begin{enumerate}
\item We have $\lim_{t\to\infty}\|x_0^{t+1}-x^{t+1}_k\|=0$, $k=1,\cdots, K$.
\item Let $(\{x^*_k\}, x_0^*, y^*)$ denote any limit point of the sequence
$\{\{x^{t+1}_k\}, x_0^{t+1}, y^{t+1}\}$ generated by Algorithm 3 with period-$T$ essentially cyclic block update rule.  Then $(\{x^*_k\}, x_0^*, y^*)$ is a stationary solution of problem \eqref{eq:consensus:admm}.
\item If $X$ is a compact set, then Algorithm 3 with period-T essentially cyclic block update rule converges to the set of stationary solutions of problem \eqref{eq:consensus:admm}. That is, the following is true
    \begin{align}
    \lim_{t\to\infty}\mbox{\rm dist}\left( (\{x^t_k\}, x_0^t, y^t); Z^* \right)=0.
    \end{align}
    where $Z^*$ is the set of primal-dual stationary solutions of problem \eqref{eq:consensus:admm}.
\end{enumerate}
}
%Then we have
%\begin{align}
%x_k^*&=\arg\min_{x_k} \quad g_k(x_k)+\langle y^*_k, x_k-x^*\rangle,\quad k=1,\cdots,K.\nonumber\\
%x^*&=\arg\min_{x\in X}\quad   h(x)+\sum_{k=1}^{K}\langle y^*_k, x^*_k-x\rangle\nonumber\\
%x_k^*&=x^*, \quad k=1,\cdots, K. \nonumber
%\end{align}
%%Or equivalently
%%\begin{align}
%%0\in \sum_{k=1}^{K}\nabla g_k(x^*_k)+\partial h(x^*)+\sum_{k=1}^{K}\langle y^*_k, x_k-x^*\rangle+I\{x^*\in X\}
%%\end{align}
%That is,
\end{theorem}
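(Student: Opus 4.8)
The plan is to follow the three-part template of the proof of Theorem~\ref{thm:convergence1}, substituting the Algorithm~3 versions of the key estimates: Lemma~\ref{lemma:y2} (the dual increment is controlled by primal increments), Lemma~\ref{lemma:L_difference2} (summable descent of the augmented Lagrangian), and Lemma~\ref{lemma:L_bounded2} (the augmented Lagrangian converges and is bounded below by $\underline f$). Because Algorithm~3 uses only the period-$T$ essentially cyclic rule, the whole argument is deterministic; in particular neither the virtual iterates $\hx,\tx$ nor the Supermartingale Convergence Theorem are needed, which simplifies the bookkeeping relative to the randomized case of Theorem~\ref{thm:convergence1}. For part~(1), I would let $t\to\infty$ in \eqref{eq:L_descent2}: since the left-hand side is bounded below by Lemma~\ref{lemma:L_bounded2}, the nonnegative series $\sum_i\alpha_k\|x^{i+1}_k-x^i_k\|^2$ and $\sum_i\beta_k\|x_0^{i+1}-x_0^i\|^2$ must converge, and as Assumption~B forces $\alpha_k>0$, $\beta_k>0$, this yields $\|x^{t+1}_k-x^t_k\|\to 0$ and $\|x_0^{t+1}-x_0^t\|\to 0$. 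The period-$T$ rule then gives $\|x^{t+1}_k-x^{t(k)}_k\|\to 0$ and $\|x_0^{t+1}-x_0^{t(k)}\|\to 0$ (the latter by summing at most $T$ consecutive $x_0$ increments), feeding these into Lemma~\ref{lemma:y2} gives $\|y^{t+1}_k-y^t_k\|\to 0$, and the dual recursion \eqref{eq:y_update_prox} finally converts the vanishing dual increment into the vanishing consensus gap $\|x^{t+1}_k-x_0^{t+1}\|\to 0$.

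For part~(2), I would read the stationarity conditions off the per-iteration optimality systems. The proximal $x_k$ update together with \eqref{eq:y_update_prox} yields the identity \eqref{eq:y_expression2}, namely $\nabla g_k(x_0^{t+1})+L_k(x^{t+1}_k-x_0^{t+1})=-y^{t+1}_k$, while the exact $x_0$ update yields, for some $\eta^{t+1}\in\partial h(x_0^{t+1})$, a variational inequality over $X$ whose gradient part is $\sum_k(-y^t_k+\rho_k(x_0^{t+1}-x^t_k))$. Passing to the limit along a subsequence converging to $(\{x^*_k\},x_0^*,y^*)$ and invoking part~(1) (so that $x^{t+1}_k-x_0^{t+1}\to 0$, hence $x^*_k=x_0^*$, and both the $L_k$ and the $\rho_k$ correction terms vanish) produces exactly $\nabla g_k(x^*_k)+y^*_k=0$, $x^*_k=x_0^*$, and $x_0^*\in\arg\min_{x\in X} h(x)+\sum_k\langle y^*_k,x^*_k-x\rangle$. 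Note that since $x_0$ is refreshed at every iteration, no window index $r(0)\in[t,t+T]$ is needed here, unlike in Theorem~\ref{thm:convergence1}.

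For part~(3), I would first establish boundedness of the entire iterate sequence: compactness of $X$ bounds $\{x_0^t\}$; part~(1) then bounds each $\{x^t_k\}$; Lipschitz continuity of $\nabla g_k$ on the compact set bounds $\{\nabla g_k(x_0^t)\}$; and \eqref{eq:y_expression2} in turn bounds $\{y^t_k\}$ (for frozen blocks one reads the value off the last active iteration $t(k)$). Since every subsequence then has a limit point, which by part~(2) lies in $Z^*$, the same subsequential-limit/contradiction argument as in the proof of Theorem~\ref{thm:convergence1} upgrades part~(2) into $\mathrm{dist}\big((\{x^t_k\},x_0^t,y^t);Z^*\big)\to 0$.

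I expect the main obstacle to be the step in part~(1) that establishes $\|x^{t+1}_k-x_0^{t+1}\|\to 0$ for indices $k\notin\cC^{t+1}$: for such $k$ the dual variable is frozen, so the gap cannot be read directly from \eqref{eq:y_update_prox} at iteration $t+1$. Instead I would bound $\|x^{t+1}_k-x_0^{t+1}\|\le\|x^{t(k)}_k-x_0^{t(k)}\|+\|x_0^{t(k)}-x_0^{t+1}\|$, controlling the first term at the last active time $t(k)$ and the drift of $x_0$ over the length-$T$ window via $\|x_0^{t+1}-x_0^{t(k)}\|\to 0$. This coupling between $x_0$ and the frozen blocks across the window---precisely the origin of the cross terms $\|x_0^{t+1}-x_0^{t(k)}\|^2$ in Lemmas~\ref{lemma:y2} and~\ref{lemma:L_difference2}---is what makes the estimate more delicate than in the exact Algorithm~2, and it is also the reason the enlarged lower bounds $\rho_k\ge 5L_k$ and the $T^2$-dependent thresholds in Assumption~B are required.
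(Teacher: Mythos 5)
Your proposal is correct and follows exactly the route the paper intends: the paper omits this proof, stating only that it is similar to that of Theorem \ref{thm:convergence1} but built on Lemmas \ref{lemma:y2}--\ref{lemma:L_bounded2}, and your argument is precisely that adaptation (summable descent from \eqref{eq:L_descent2} plus the lower bound from Lemma \ref{lemma:L_bounded2} to kill the successive differences, Lemma \ref{lemma:y2} and the dual recursion \eqref{eq:y_update_prox} for the consensus gap, the optimality systems and \eqref{eq:y_expression2} for stationarity of limit points, and the compactness/contradiction argument for part (3)). Your explicit handling of the frozen blocks $k\notin\cC^{t+1}$ via $\|x^{t+1}_k-x_0^{t+1}\|\le\|x^{t(k)}_k-x_0^{t(k)}\|+\|x_0^{t(k)}-x_0^{t+1}\|$ correctly fills in a detail the paper leaves implicit.
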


\section{The Nonconvex Sharing Problem}\label{sec:sharing}
Consider the following well-known sharing problem (see, e.g., \cite[Section 7.3]{BoydADMM} for motivation)
\begin{align}
\begin{split}\label{eq:sharing}
\min&\quad f(x_1,\cdots,x_K):=\sum_{k=1}^{K}g_k(x_k)+\ell\left(\sum_{k=1}^{K}A_k x_k\right)\\
\st &\quad x_k\in X_k,\; k=1,\cdots, K,
\end{split}
\end{align}
{where $x_k\in\mathbb{R}^{N_k}$ is the variable associated with a given agent $k$, and $A_k\in\mathbb{R}^{M\times N_k}$ is some data matrix}. The variables are coupled through the function $\ell(\cdot)$.

To facilitate distributed computation, this problem can be equivalently formulated into a linearly constrained problem by introducing an additional variable $x_0\in \mathbb{R}^{M}$:
\begin{align}
\begin{split}\label{eq:sharing:admm}
\min&\quad \sum_{k=1}^{K}g_k(x_k)+\ell\left(x_0\right)\\
\st &\quad \sum_{k=1}^{K}A_k x_k=x_0,\quad  x_k\in X_k,\; k=1,\cdots, K.
\end{split}
\end{align}
The augmented Lagrangian for this problem is given by
\begin{align}\label{eq:lagrangian:sharing}
L(\{x_k\}, x_0; y)=\sum_{k=1}^{K}g_k(x_k)+\ell(x_0)+\bigg\langle x_0-\sum_{k=1}^{K}A_k x_k, y\bigg\rangle+\frac{\rho}{2}\bigg\|x_0-\sum_{k=1}^{K}A_k x_k\bigg\|^2.
\end{align}

Note that we have chosen a special reformulation in \eqref{eq:sharing:admm}: a {\it single} variable $x_0$ is introduced which leads to a problem with a {\it single} linear constraint.  Applying the classical ADMM to this reformulation leads to a {\it multi-block} ADMM algorithm in which $K+1$ block variables $(\{x_k\}_{k=1}^{K}, x_0)$ are updated sequentially. As mentioned in {the} introduction, even in the case where the objective is convex, it is not known whether the multi-block ADMM converges in this case. Variants of the multi-block ADMM has been proposed in the literature to solve this type of multi-block problems; see recent developments in \cite{HongLuo2012ADMM, he:alternating12, chen13, hong13BSUMM, Wang13} and the references therein.

{In this section, we show that the classical ADMM, together with several of its extensions using different block selection rules, converge even when the objective function is nonconvex.} The main assumptions for convergence are that the penalty parameter $\rho$ is large enough, and that the coupling function $\ell(x_0)$ should be smooth (more detailed conditions will be given shortly).
%\begin{center}
%\fbox{
%\begin{minipage}{5.5in}
%\smallskip
%\centerline{\bf Algorithm 5. The Classical ADMM for the Sharing Problem \eqref{eq:sharing:admm}}
%\smallskip\small
%At each iteration $t\ge 1$, and for each $k$, compute:
%\begin{align*}
%x_k^{t+1}&={\rm arg}\!\min_{x_k\in X_k}L\left((w^{t+1}_{-k}, x_k), x^t;  y^{t}\right)\nonumber\\
%&={\rm arg}\!\min_{x_k\in X_k} g_k(x_k)-\langle y^t, A_k x_k\rangle+\frac{\rho}{2}\bigg\|x^t-\sum_{j<k}A_j x_j^{t+1}-\sum_{j>k}A_j x_j^{t}-A_k x_k\bigg\|^2
%\end{align*}
%Update the variable $x$ by:
%\begin{align*}
%%\nabla f_k(x_k)+y^{t(k)}+\rho_k (x_k-x^{t+1})=0
%x^{t+1}=\arg\min_{x} \ell(x)+\langle y^{t}, x\rangle+\frac{\rho}{2}\left\|x-\sum_{k=1}^{K}A_k x^{t+1}_k\right\|^2.
%\end{align*}
%Update the dual variable:
%\begin{align}
%y^{t+1}=y^t+\rho\left(x^{t+1}-\sum_{k=1}^{K}A_k x_k^{t+1}\right).
%\end{align}
%\end{minipage}
%}
%\end{center}
%\textbf{[[[Just go directly to Algorithm 6]]]}
Similarly as in the previous sections, we consider a generalized version of  ADMM with two types of block update rules: the period-$T$ essentially cyclic rule and the randomized rule. The detailed algorithm is given in the table below.

\begin{center}
\fbox{
\begin{minipage}{4.9in}
\smallskip
\centerline{\bf Algorithm 4. The Flexible ADMM for Problem \eqref{eq:sharing:admm}}
\smallskip
{Let $\cC^{1}=\{0,\cdots, K\}$, $t=0, 1, \cdots$.

At each iteration $t+1$, do:\\

{\bf If} $t+1\ge 2$, pick an index set $\cC^{t+1}\subseteq\{0,\cdots,K\}$}.\\

{\bf For} $k=1,\cdots, K$

{\bf If} \quad $k\in\cC^{t+1}$, then agent $k$ updates $x_k$ by:
\begin{align}\label{eq:x_k_update6}
x_k^{t+1}&={\rm arg}\!\min_{x_k\in X_k} g_k(x_k)-\langle y^t, A_k x_k\rangle+\frac{\rho}{2}\bigg\|x_0^t-\sum_{j<k}A_j x_j^{t+1}-\sum_{j>k}A_j x_j^{t}-A_k x_k\bigg\|^2
\end{align}
{\bf Else} $x^{t+1}_k=x_{k}^{t}$.

{\bf If} $0\in\cC^{t+1}$, update the variable $x_0$ by:
\begin{align}\label{eq:x_update6}
x_0^{t+1}=\arg\min_{x} \ell(x_0)+\langle y^{t}, x_0\rangle+\frac{\rho}{2}\left\|x_0-\sum_{k=1}^{K}A_k x^{t+1}_k\right\|^2.
\end{align}
\; \; Update the dual variable:
\begin{align}
y^{t+1}=y^t+\rho\left(x_0^{t+1}-\sum_{k=1}^{K}A_k x_k^{t+1}\right).
\end{align}
{\bf Else} $x_0^{t+1}=x_0^{t}$, $y^{t+1}=y^t$.
\end{minipage}
}
\end{center}

The analysis of Algorithm 4 follows similar argument as that of Algorithm 3. Therefore we will only provide an outline for it.

First, we make the following assumptions in this section.

\pn {\bf Assumption C.}
\begin{itemize}
\item[C1.] There exists a positive constant $L>0$ such that $$\|\nabla \ell(x)-\nabla \ell(z)\|\le L \|x-z\|, \; \forall~x,z.$$    Moreover, $X_k$'s are closed convex sets; each $A_k$ is full column rank so that {$\lambda_{\rm min}(A_k^{T} A_k)>0$, where $\lambda_{\rm min}$ denotes the minimum eigenvalue of a matrix}.
\item[C2.] The penalty parameter $\rho$ is chosen large enough such that:
\begin{enumerate}
\item [(1)] {Each $x_k$ subproblem \eqref{eq:x_k_update6}} as well as the $x_0$ subproblem \eqref{eq:x_update6} is strongly convex, with modulus $\{\gamma_k(\rho)\}_{k=1}^{K}$ and $\gamma(\rho)$, respectively.
\item[(2)] $\rho\gamma(\rho)> 2 L^2$, and that $\rho\ge L$.
\end{enumerate}
\item[C3.] $f(x_1,\cdots, x_K)$ is lower bounded over $\prod_{k=1}^KX_k$.
\item[C4.] $g_k$ is either smooth nonconvex or convex (possibly nonsmooth). For the former case, there exists $L_k>0$ such that $\|{\nabla} g_k(x_k)-{\nabla}g_k(z_k)\|\le L_k\|x_k-z_k\|$, $\forall~x_k, z_k\in X_k$.
\end{itemize}

Note that compared with Assumptions A and B, in this case we no longer require that each $g_k$ to be smooth. Define an index set $\cK\subseteq \{1,\cdots,K\}$, such that $g_k$ is convex if $k\in\cK$, and nonconvex smooth otherwise. Further, the requirement that $A_k$ is full column rank is needed to make the $x_k$ subproblem \eqref{eq:x_k_update6} strongly convex. % \footnote{Again here we have implicitly assumed that the nonconvex function $g_k$ is in the family of functions that has bounded Hessian in its effective domain. Note that if $g_k$ is convex and $A_k$ is full column rank, then problem \eqref{eq:x_k_update6} becomes strongly convex. }.

Our convergence analysis consists of a series of lemmas whose proofs, for the most part, are omitted since they are similar to that of Lemma \ref{lemma:y1}--Lemma \ref{lemma:L_bounded1}.
\begin{lemma}\label{lemma:y3}
Suppose Assumption C is satisfied. Then for Algorithm 4 with either essentially cyclic rule or the randomized rule, the following is true
\begin{align}
&\nabla\ell(x_0^{t+1})=-y^{t+1}, \; \mbox{if}~~0\in\cC^{t+1},\quad L^2\|x_0^{t+1}-x_0^t\|^2\ge \|y^{t+1}-y^t\|^2, \nonumber\\
&L^2\|x_0^{t+1}-x_0^{t(k)}\|^2\ge \|y^{t+1}-y^{t(k)}\|^2, \; {L^2\|\hx_0^{t+1}-x_0^{t}\|^2\ge \|\hy^{t+1}-y^{t}\|^2}.\nonumber
\end{align}
\end{lemma}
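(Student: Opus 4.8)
The plan is to mimic the proof of Lemma \ref{lemma:y1}, with the Lipschitz gradient of the coupling function $\ell$ (Assumption C1) now playing the role that the Lipschitz gradients of the $g_k$'s played there. First I would write down the optimality condition of the $x_0$ subproblem \eqref{eq:x_update6}. When $0\in\cC^{t+1}$, setting the gradient of the strongly convex objective to zero at $x_0^{t+1}$ gives
\begin{align}
\nabla\ell(x_0^{t+1})+y^{t}+\rho\Big(x_0^{t+1}-\sum_{k=1}^{K}A_k x_k^{t+1}\Big)=0.\nonumber
\end{align}
Recognizing that $\rho\big(x_0^{t+1}-\sum_{k}A_k x_k^{t+1}\big)=y^{t+1}-y^t$ by the dual update, substitution yields $\nabla\ell(x_0^{t+1})+y^t+(y^{t+1}-y^t)=0$, i.e.\ $\nabla\ell(x_0^{t+1})=-y^{t+1}$. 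This is the first assertion.

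The key structural observation is that in Algorithm 4 the primal block $x_0$ and the dual variable $y$ are updated \emph{simultaneously}: both reside in the ``$0\in\cC^{t+1}$'' branch and are frozen together otherwise. Consequently, by induction on the iteration index (the base case guaranteed by $0\in\cC^1$), the identity $y^{r}=-\nabla\ell(x_0^{r})$ holds for \emph{every} $r\ge 1$, not merely for the iterations in which $x_0$ is actively updated. This is the point that requires care and is the main obstacle: although $t(k)$ is defined as the last iteration at which the block $x_k$ (rather than $x_0$) is updated, the synchronization still guarantees $y^{t(k)}=-\nabla\ell(x_0^{t(k)})$, since $y^{t(k)}$ equals its value at the last $x_0$-update preceding $t(k)$ — where the optimality relation holds — and $x_0$ was held fixed in between.

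With this identity in hand, the remaining inequalities are immediate consequences of Assumption C1. For the second I would write
\begin{align}
\|y^{t+1}-y^t\|=\|\nabla\ell(x_0^{t+1})-\nabla\ell(x_0^t)\|\le L\|x_0^{t+1}-x_0^t\|,\nonumber
\end{align}
and then square; the $t(k)$-version follows identically using $y^{t(k)}=-\nabla\ell(x_0^{t(k)})$. For the virtual-iterate bound involving $\hx_0^{t+1},\hy^{t+1}$, I would first re-run the optimality-condition argument of the opening step on the defining relations of the hatted quantities (the sharing-problem analogues of \eqref{eq:hx}--\eqref{eq:hy}) to obtain $\nabla\ell(\hx_0^{t+1})=-\hy^{t+1}$, and then invoke Lipschitz continuity exactly as above. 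The case $0\notin\cC^{t+1}$ is trivial, since there $x_0^{t+1}=x_0^t$ and $y^{t+1}=y^t$ make both sides vanish. Apart from the bookkeeping around the synchronized $x_0$/$y$ updates and the indices $t(k)$, the argument is a direct transcription of Lemma \ref{lemma:y1}.
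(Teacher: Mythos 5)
Your proof is correct and follows essentially the same route the paper intends: the paper omits the proof of this lemma, stating it mirrors Lemma \ref{lemma:y1}, and its remark after Lemma \ref{lemma:L_bounded3} confirms precisely your key observation that $y^{t+1}=-\nabla\ell(x_0^{t+1})$ holds at every iteration because $x_0$ and $y$ are always updated (or frozen) together. Your induction argument establishing this persistence identity, and its use for the $t(k)$ and hatted-iterate versions, is exactly the bookkeeping the paper's argument relies on.
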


\begin{lemma}\label{lemma:L_difference3}
Suppose Assumption C is satisfied. Then for Algorithm 4 with either essentially cyclic rule or the randomized rule, the following is true
\begin{align}
&L(\{x^{t+1}_k\}, x_0^{t+1}; y^{t+1})-L(\{x^{t}_k\}, x_0^{t}; y^{t})\nonumber\\
&\le \sum_{k\ne 0, k\in\cC^{t+1}}-\frac{\gamma_k(\rho)}{2}\|x_k^{t+1}-x_{k}^{t}\|^2-\left(\frac{\gamma(\rho)}{2}-\frac{L^2}{\rho}\right)\|x_0^{t+1}-x_0^t\|^2.
\end{align}
\end{lemma}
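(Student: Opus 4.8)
The plan is to follow the template of Lemma~\ref{lemma:L_difference1} essentially verbatim in structure, exploiting the crucial structural feature of the sharing reformulation \eqref{eq:sharing:admm}: the coupling now enters only through the single block $x_0$ via the smooth function $\ell$, so that the dual residual is controlled entirely by the movement of $x_0$. Concretely, I would first split the one-step change of the augmented Lagrangian as in \eqref{eq:successive_L1}, separating the pure dual change $L(\{x^{t+1}_k\}, x_0^{t+1}; y^{t+1})-L(\{x^{t+1}_k\}, x_0^{t+1}; y^{t})$ from the pure primal change $L(\{x^{t+1}_k\}, x_0^{t+1}; y^{t})-L(\{x^{t}_k\}, x_0^{t}; y^{t})$, and bound each separately.

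For the dual term, since only $y$ varies, the difference equals $\langle y^{t+1}-y^{t}, x_0^{t+1}-\sum_k A_k x_k^{t+1}\rangle$; substituting the dual update $y^{t+1}-y^{t}=\rho(x_0^{t+1}-\sum_k A_k x_k^{t+1})$ collapses this to $\frac{1}{\rho}\|y^{t+1}-y^{t}\|^2$ (and to $0$ when $0\notin\cC^{t+1}$, consistent with $y^{t+1}=y^{t}$). The key input here is Lemma~\ref{lemma:y3}, which gives $\|y^{t+1}-y^{t}\|^2\le L^2\|x_0^{t+1}-x_0^t\|^2$. I would emphasize the contrast with the consensus case, where the analogous bound tied $y_k$ to $\nabla g_k$ and hence to $\|x_k\|$-differences; here the identity $\nabla\ell(x_0^{t+1})=-y^{t+1}$ ties the dual entirely to $x_0$. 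Thus the dual term contributes at most $\frac{L^2}{\rho}\|x_0^{t+1}-x_0^t\|^2$.

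For the primal term I would telescope across the Gauss--Seidel sweep. Introducing the intermediate iterates in which $x_1,\dots,x_j$ have been refreshed while $x_{j+1},\dots,x_K$ and $x_0$ remain at their old values, each $x_k$ subproblem \eqref{eq:x_k_update6} is, by Assumption~C2, strongly convex with modulus $\gamma_k(\rho)$, so optimality of $x_k^{t+1}$ yields a decrease of at least $\frac{\gamma_k(\rho)}{2}\|x_k^{t+1}-x_k^t\|^2$ for each updated block (and no change for $k\notin\cC^{t+1}$, where $x_k^{t+1}=x_k^t$). Summing over the sweep and then applying the same argument to the $x_0$ subproblem \eqref{eq:x_update6}, strongly convex with modulus $\gamma(\rho)$, bounds the primal term by $-\sum_{k\ne 0,k\in\cC^{t+1}}\frac{\gamma_k(\rho)}{2}\|x_k^{t+1}-x_k^t\|^2-\iota\{0\in\cC^{t+1}\}\frac{\gamma(\rho)}{2}\|x_0^{t+1}-x_0^t\|^2$. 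Adding the two bounds gives the claim, the $\frac{L^2}{\rho}$ contribution being absorbed into the $x_0$ coefficient; when $0\notin\cC^{t+1}$ both the dual term and the $x_0$ term vanish, so the stated inequality holds with no case distinction. The only genuine bookkeeping hazard — and the step I would watch most carefully — is keeping the Gauss--Seidel ordering straight, since each $x_k$ update in \eqref{eq:x_k_update6} uses the \emph{old} $x_0^t$ (not $x_0^{t+1}$) while using already-refreshed $x_j^{t+1}$ for $j<k$; this is precisely why the telescoping must process all $x_k$ blocks before the $x_0$ block, and why no stray cross terms appear. Since the resulting inequality is pathwise (given the realized $\cC^{t+1}$), it holds verbatim for both the essentially cyclic and the randomized rules, with no expectation needed at this stage.
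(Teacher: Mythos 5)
Your proof is correct and follows essentially the same route the paper intends: the paper omits this proof, stating it mirrors Lemmas \ref{lemma:y1}--\ref{lemma:L_bounded1}, and your argument reproduces that template exactly --- split the change into a dual part (collapsed to $\frac{1}{\rho}\|y^{t+1}-y^t\|^2$ via the dual update and bounded by $\frac{L^2}{\rho}\|x_0^{t+1}-x_0^t\|^2$ using Lemma \ref{lemma:y3}) and a primal part handled by per-block strong convexity. Your explicit Gauss--Seidel telescoping with intermediate iterates, processing the coupled $x_k$ blocks before $x_0$, is precisely the adaptation the sharing problem requires relative to the separable consensus case, and your observation that the inequality holds pathwise for both block-selection rules is also correct.
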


\begin{lemma}\label{lemma:L_bounded3}
Assume the same set up as in Lemma \ref{lemma:L_difference3}. {Then the following limit exists and is bounded from below
\begin{align}
\lim_{t\to\infty} L(\{x^{t+1}_k\}, x_0^{t+1}; y^{t+1}).
\end{align}}
\end{lemma}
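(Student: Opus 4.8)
Lemma~\ref{lemma:L_bounded3} asserts that the augmented Lagrangian sequence generated by Algorithm~4 converges to a finite limit bounded below. The excerpt explicitly states that this lemma mirrors Lemma~\ref{lemma:L_bounded1}, so the plan is to reproduce the two-part argument of that earlier proof in the sharing setting: first establish a \emph{lower bound} on $L(\{x^{t+1}_k\}, x_0^{t+1}; y^{t+1})$ that is independent of $t$, and then combine this with the \emph{monotone descent} already guaranteed by Lemma~\ref{lemma:L_difference3} to invoke the monotone convergence theorem for real sequences. The monotonicity is essentially free once Assumption~C2 forces the coefficients $\gamma_k(\rho)/2 > 0$ and $\gamma(\rho)/2 - L^2/\rho > 0$ in the descent estimate of Lemma~\ref{lemma:L_difference3}, so the real work is the lower bound.

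\textbf{The lower bound.} First I would express the augmented Lagrangian and use the dual optimality relation $\nabla\ell(x_0^{t+1}) = -y^{t+1}$ from Lemma~\ref{lemma:y3} to eliminate the inner product term, exactly as step $\rm(a)$ does in Lemma~\ref{lemma:L_bounded1}. Specifically, writing
\begin{align}
L(\{x^{t+1}_k\}, x_0^{t+1}; y^{t+1})
&= \sum_{k=1}^{K} g_k(x_k^{t+1}) + \ell(x_0^{t+1})
+ \Big\langle x_0^{t+1} - \sum_{k=1}^{K} A_k x_k^{t+1},\, y^{t+1}\Big\rangle \nonumber\\
&\quad + \frac{\rho}{2}\Big\| x_0^{t+1} - \sum_{k=1}^{K} A_k x_k^{t+1}\Big\|^2, \nonumber
\end{align}
I would substitute $y^{t+1} = -\nabla\ell(x_0^{t+1})$ into the linear term and then appeal to the $L$-Lipschitz continuity of $\nabla\ell$ (Assumption~C1), which yields the descent-lemma inequality $\ell(x_0^{t+1}) - \langle \nabla\ell(x_0^{t+1}), x_0^{t+1} - \sum_k A_k x_k^{t+1}\rangle \ge \ell(\sum_k A_k x_k^{t+1}) - \frac{L}{2}\|x_0^{t+1} - \sum_k A_k x_k^{t+1}\|^2$. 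Combined with $\rho \ge L$ (Assumption~C2), the quadratic penalty absorbs the error term, leaving
\begin{align}
L(\{x^{t+1}_k\}, x_0^{t+1}; y^{t+1})
\ge \sum_{k=1}^{K} g_k(x_k^{t+1}) + \ell\Big(\sum_{k=1}^{K} A_k x_k^{t+1}\Big)
= f(x_1^{t+1},\dots,x_K^{t+1}) \ge \underline{f}, \nonumber
\end{align}
where the final step is Assumption~C3 (boundedness of $f$ over $\prod_k X_k$), valid since each $x_k^{t+1}\in X_k$. A bounded-below sequence that is nonincreasing converges, completing the proof.

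\textbf{The main obstacle.} The one place where the sharing proof genuinely departs from Lemma~\ref{lemma:L_bounded1} is the treatment of the \emph{nonsmooth convex} blocks $k\in\cK$: for those indices $g_k$ need not be differentiable, so the clean identity $\langle y_k^{t+1}, x_k^{t+1} - x_0^{t+1}\rangle = \langle \nabla g_k, \cdot\rangle$ used in the consensus case does not transfer to the $x_k$ terms. The saving grace is that in formulation~\eqref{eq:sharing:admm} there is only a \emph{single} multiplier $y$ tied to $x_0$, not one multiplier per block; the coupling function $\ell$ (which is assumed smooth by Assumption~C1) is the only object touched by the dual substitution, and the nonsmooth $g_k$'s appear only through the feasibility-evaluated term $\sum_k g_k(x_k^{t+1})$, which is simply bounded below by Assumption~C3 without any gradient manipulation. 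Hence the argument is actually cleaner than the consensus case rather than harder: the linearization $\rm(a)$ is applied once, to $\ell$ alone. I would therefore expect the lower bound to go through with only cosmetic changes, and the descent/monotonicity half to be immediate from Lemma~\ref{lemma:L_difference3}.
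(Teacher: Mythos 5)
Your proposal is correct and follows essentially the same route as the paper's proof: substitute $y^{t+1}=-\nabla\ell(x_0^{t+1})$ (Lemma \ref{lemma:y3}) into the linear term, apply the descent lemma to $\ell$ so the quadratic penalty absorbs the $\tfrac{L}{2}\|\cdot\|^2$ error via $\rho\ge L$, conclude $L\ge f\ge$ its lower bound (Assumption C3), and combine with the monotone decrease from Lemma \ref{lemma:L_difference3} under Assumption C2. The only detail worth adding is the paper's remark that the identity $y^{t+1}=-\nabla\ell(x_0^{t+1})$ holds at \emph{every} iteration (not just when $0\in\cC^{t+1}$) because $y$ and $x_0$ are always updated together, so the bound is deterministic even under the randomized rule.
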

\begin{proof}
We have the following series of inequalities
\begin{align}
&L(\{x_k^{r+1}\}, x_0^{r+1}; y^{r+1})\nonumber\\
&=\sum_{k=1}^{K}g_k(x^{t+1}_k)+\ell(x_0^{t+1})+\bigg\langle x_0^{t+1}-\sum_{k=1}^{K}A_k x^{t+1}_k, y^{t+1}\bigg\rangle+\frac{\rho}{2}\bigg\|x_0^{t+1}-\sum_{k=1}^{K}A_k x^{t+1}_k\bigg\|^2\nonumber\\
&= \sum_{k=1}^{K}g_k(x^{t+1}_k)+\ell(x_0^{t+1})+\bigg\langle \sum_{k=1}^{K}A_k x^{t+1}_k-x_0^{t+1}, \nabla \ell(x_0^{t+1})\bigg\rangle+\frac{\rho}{2}\bigg\|x_0^{t+1}-\sum_{k=1}^{K}A_k x^{t+1}_k\bigg\|^2\nonumber\\
&\ge \sum_{k=1}^{K}g_k(x^{t+1}_k)+\ell\left(\sum_{k=1}^{K}A_k x^{t+1}_k\right)+\frac{\rho-L}{2}\bigg\|x_0^{t+1}-\sum_{k=1}^{K}A_k x^{t+1}_k\bigg\|^2.\nonumber
\end{align}
The last inequality comes from the fact that
\begin{align}
\ell\left(\sum_{k=1}^{K}A_k x^{t+1}_k\right)&\le \ell(x_0^{t+1})+\bigg\langle \sum_{k=1}^{K}A_k x^{t+1}_k-x_0^{t+1}, \nabla \ell(x_0^{t+1})\bigg\rangle+\frac{L}{2}\left\|x_0^{t+1}-\sum_{k=1}^{K}A_k x^{t+1}_k\right\|^2\nonumber.
\end{align}
Using assumptions C2.-- C3. leads to the desired result.
%Using the lower boundedness of $f(x^{t+1}_1,\cdots,x^{t+1}_K)=\sum_{k=1}^{K}g_k(x^{t+1}_k)+\ell\left(\sum_{k=1}^{K}A_k x^{t+1}_k\right)$ assumed in Assumption C3. as well as the fact that $\rho\ge L$, we conclude that $L(x^{r+1}, \{x^{r+1}; y^{r+1}\})$ is lower bounded. From Lemma \ref{lemma:L_difference3}, we have that when $\rho$ satisfies $\rho\gamma(\rho)> 2 L^2$,  $L(x^{r+1}, \{x^{r+1}; y^{r+1}\})$ is monotonically decreasing. Combining these two facts proves the desired result. %
\end{proof}

{ We note that the above result holds true deterministically even if the randomized scheme is used. The reason is that at each iteration regardless of whether $0\in\cC^{t+1}$, we have $y^{t+1}=-\nabla \ell(x^{t+1})$ because these two variables are always updated at the same iteration. The rest of the proof is not dependent on the algorithm. }

We have the following main result for the nonconvex consensus problem.
\begin{theorem}\label{thm:convergence3}
{Suppose that Assumption C holds. Then the following is true for Algorithm 4, either deterministically for the essentially cyclic update rule or almost surely for the randomized update rule.
\begin{enumerate}
\item We have $\lim_{t\to\infty}\|\sum_{k}A_kx^{t+1}_k-x_0^{t+1}\|=0$, $k=1,\cdots, K$.
\item Let $(\{x^*_k\}, x_0^*, y^*)$ denote any limit point of the sequence
$\{\{x^{t+1}_k\}, x_0^{t+1}, y^{t+1}\}$ generated by Algorithm 4. Then $(\{x^*_k\}, x_0^*, y^*)$ is a stationary solution of problem \eqref{eq:sharing:admm} in the sense that %Then the following statement is true (deterministically for essentially cyclic update rule and almost surely for randomized update rule)
\begin{align}
&x_k^*\in \arg\min_{x_k\in X_k} \; g_k(x_k)+\langle y^*, -A_k x_k\rangle,\; k\in\cK,\nonumber\\
& \left \langle x_k-x^*_k, \nabla g_k(x^*_k)- A^T_ky^*\right\rangle\ge 0,\; \forall~x_k\in X_k, \; k\notin\cK,\nonumber\\
&\nabla \ell(x_0^*)+y^* = 0, \nonumber\\
&\sum_{k=1}^{K}A_kx_k^*=x_0^*. \nonumber
\end{align}
%, deterministically for essentially cyclic update rule and almost surely for randomized update rule.
\item  {If $X_k$ is a compact set for all $k$, then Algorithm 4 converges to the set of stationary solutions of problem \eqref{eq:sharing:admm}, i.e.,
        \begin{align}
    \lim_{t\to\infty}\mbox{\rm dist}\left( (\{x^t_k\}, x_0^t, y^t); Z^* \right)=0,
    \end{align}
    where $Z^*$ is the set of primal-dual stationary solution for problem \eqref{eq:sharing:admm}.}
\end{enumerate}
}
\end{theorem}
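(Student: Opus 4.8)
The plan is to reproduce, essentially line for line, the three-part argument of Theorem~\ref{thm:convergence1}, substituting the sharing-problem lemmas \ref{lemma:y3}, \ref{lemma:L_difference3} and \ref{lemma:L_bounded3} for their consensus counterparts. For part~(1) I would first establish that all successive primal differences vanish. Summing the descent estimate of Lemma~\ref{lemma:L_difference3} over iterations and invoking the convergence and lower-boundedness of the augmented Lagrangian from Lemma~\ref{lemma:L_bounded3}, the telescoped right-hand side is summable; since each coefficient $\gamma_k(\rho)/2$ and $\gamma(\rho)/2-L^2/\rho$ is strictly positive under Assumption~C2, this forces $\|x_0^{t+1}-x_0^{t(0)}\|\to 0$ and $\|x_k^{t+1}-x_k^{t(k)}\|\to 0$ for every $k$ (for the essentially cyclic rule one sums over a window of length $T$ so that every block is updated at least once; for the randomized rule one passes to conditional expectations over the block choice, introduces the virtual iterates $\hx_0^{t+1},\hy^{t+1}$ of Lemma~\ref{lemma:y3}, and applies the Supermartingale Convergence Theorem exactly as in Theorem~\ref{thm:convergence1}). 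Lemma~\ref{lemma:y3} then upgrades this to $\|y^{t+1}-y^{t}\|\to 0$, and since the dual update gives $x_0^{t+1}-\sum_k A_k x_k^{t+1}=\frac{1}{\rho}(y^{t+1}-y^{t})$, the feasibility residual $\|\sum_k A_k x_k^{t+1}-x_0^{t+1}\|\to 0$, which is the claim of part~(1).

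For part~(2) I would write out the first-order optimality (variational-inequality) condition of each subproblem at the last iteration in which its block is updated. The $x_0$-subproblem \eqref{eq:x_update6} is smooth and unconstrained, so its condition reads $\nabla\ell(x_0^{t+1})+y^{t}+\rho(x_0^{t+1}-\sum_k A_k x_k^{t+1})=0$, i.e.\ $\nabla\ell(x_0^{t+1})+y^{t+1}=0$, which in the limit yields $\nabla\ell(x_0^*)+y^*=0$ (consistent with Lemma~\ref{lemma:y3}). For each $x_k$-subproblem \eqref{eq:x_k_update6} I would separate the two cases recorded in Assumption~C4: when $k\in\cK$ the convexity of $g_k$ gives the full variational inequality over $X_k$, and when $k\notin\cK$ the smoothness of $g_k$ gives the first-order inequality. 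The crucial observation is that the Gauss--Seidel coupling term $\rho A_k^{T}(x_0^{t}-\sum_{j\le k}A_j x_j^{t+1}-\sum_{j>k}A_j x_j^{t})$ appearing in these conditions converges to $\rho A_k^{T}(x_0^*-\sum_j A_j x_j^*)=0$ by part~(1) together with the vanishing of all successive differences; hence the penalty contribution collapses to $-A_k^{T}y^*$. Taking limits (using $T<\infty$ to replace the last-update indices $r(k)\in[t,t+T]$ by $t+1$ in the essentially cyclic case, and the almost-sure statements in the randomized case) produces precisely the stated stationarity system, with feasibility $\sum_k A_k x_k^*=x_0^*$ supplied by part~(1).

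For part~(3) I would first argue that all three sequences lie in compact sets, so that limit points exist. Compactness of each $X_k$ bounds $\{x_k^t\}$; since $x_0^{t+1}-\sum_k A_k x_k^{t+1}=\frac{1}{\rho}(y^{t+1}-y^{t})\to 0$ and $\sum_k A_k x_k^{t+1}$ is bounded, $\{x_0^t\}$ is bounded even though $x_0$ is unconstrained; and because $y^{t+1}=-\nabla\ell(x_0^{t+1})$ with $\nabla\ell$ Lipschitz (Assumption~C1), $\{y^t\}$ is bounded as well. Then I would run the contradiction argument of Theorem~\ref{thm:convergence1}(3) verbatim: if $\dist((\{x_k^t\},x_0^t,y^t);Z^*)\not\to 0$, extract a subsequence bounded away from $Z^*$, pass to a further convergent subsequence whose limit lies in $Z^*$ by part~(2), and derive a contradiction from the triangle inequality $\dist(\cdot;Z^*)\le\|\cdot-(\text{limit})\|$.

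Because the lemmas are handed to us, essentially all the effort is in the limit-taking of part~(2). The delicate points are (i) verifying that the full-column-rank assumption on $A_k$ and the vanishing feasibility residual together annihilate the Gauss--Seidel coupling terms in the $x_k$-optimality conditions uniformly across blocks, and (ii) keeping the two cases $k\in\cK$ and $k\notin\cK$ correctly aligned, so that convex blocks yield an exact minimization characterization while nonconvex blocks yield only a first-order inequality. The a priori boundedness of the unconstrained primal block $x_0$ and of the dual $y$ in part~(3) is the other place where Assumptions~C1 and~C2 must be used explicitly rather than inherited from the consensus analysis.
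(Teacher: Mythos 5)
Your proposal is correct and takes essentially the same approach as the paper: the paper gives no separate proof of Theorem \ref{thm:convergence3}, stating only that the analysis of Algorithm 4 parallels that of Algorithms 2--3, i.e., one reruns the three-part proof of Theorem \ref{thm:convergence1} with Lemmas \ref{lemma:y3}--\ref{lemma:L_bounded3} substituted for Lemmas \ref{lemma:y1}--\ref{lemma:L_bounded1}. Your filling-in of the details --- the telescoping/supermartingale argument for part (1), the limit of the subproblem optimality conditions with the Gauss--Seidel residual $\rho A_k^T\bigl(x_0^t-\sum_{j\le k}A_jx_j^{t+1}-\sum_{j>k}A_jx_j^t\bigr)$ vanishing for part (2), and boundedness of $x_0^t$ and $y^t$ via the feasibility residual and $y^{t+1}=-\nabla\ell(x_0^{t+1})$ followed by the contradiction argument for part (3) --- is exactly that transposition.
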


The following corollary specializes the previous convergence result to the case where all $g_k$'s as well as $\ell$ are convex (but not necessarily strongly convex). We emphasize that this is still a nontrivial result, since unlike \cite{HongLuo2012ADMM, hong13BSUMM, han12admm, lin_ma_zhang2014}, we do not require the dual stepsize to be small or the $g_k$'s and $\ell$ to be strongly convex. Therefore it is not known whether the classical ADMM converges for the multi-block problem \eqref{eq:sharing:admm}, even for the convex case.
\begin{corollary}
Suppose that Assumptions C1 and C3 hold, and that $g_k$ and $\ell$ are convex. Further, suppose that Assumption C2 is weakened with the following assumption
\begin{enumerate}
\item[C2'] The penalty parameter $\rho$ is chosen large enough such that $\rho > \sqrt{2} L$.
\end{enumerate}
Then the flexible ADMM algorithm (i.e., Algorithm 4), converges to the set of primal dual optimal solution $(\{x^*_k\}, x^*, y^*)$ of problem \eqref{eq:consensus:admm}, either deterministically for the essentially cyclic update rule or almost surely for the randomized update rule.
\end{corollary}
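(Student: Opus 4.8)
The plan is to reduce the corollary to Theorem \ref{thm:convergence3} by showing that, once the $g_k$'s and $\ell$ are convex, the weakened hypothesis C2' already implies the full Assumption C2, and that for a convex problem the stationarity conditions recorded in Theorem \ref{thm:convergence3}(2) are exactly the (sufficient) optimality conditions for the sharing problem \eqref{eq:sharing:admm}. So the argument has two halves: first re-derive Assumption C from C1, C3 and C2', then promote ``stationary'' to ``optimal''.

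First I would check that convexity supplies the strong-convexity moduli demanded by C2(1) at no cost. The $x_0$ subproblem \eqref{eq:x_update6} minimizes $\ell(x_0)+\langle y^t,x_0\rangle+\frac{\rho}{2}\|x_0-\sum_k A_k x_k^{t+1}\|^2$; since $\frac{\rho}{2}\|x_0-c\|^2$ is $\rho$-strongly convex and adding the convex $\ell(x_0)$ together with an affine term preserves strong convexity with the same modulus, we obtain $\gamma(\rho)\ge\rho$. This composition argument needs no Hessian and hence does not require $\ell$ to be differentiable. Likewise each $x_k$ subproblem \eqref{eq:x_k_update6} contains the quadratic $\frac{\rho}{2}\|\cdots-A_k x_k\|^2$, whose curvature in $x_k$ is $\rho A_k^{T}A_k\succeq \rho\,\lambda_{\rm min}(A_k^{T}A_k)\,I\succ 0$ by the full-column-rank part of C1; adding the convex $g_k$ keeps it strongly convex with $\gamma_k(\rho)\ge\rho\,\lambda_{\rm min}(A_k^{T}A_k)>0$, even when $g_k$ is nonsmooth. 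This establishes C2(1).

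Next I would translate C2'. From $\gamma(\rho)\ge\rho$ we get $\rho\gamma(\rho)\ge\rho^2$, and the hypothesis $\rho>\sqrt{2}\,L$ is precisely $\rho^2>2L^2$, so $\rho\gamma(\rho)>2L^2$; moreover $\rho>\sqrt{2}\,L>L$ gives $\rho\ge L$, which is C2(2). Since C1 and C3 are assumed outright and C4 holds automatically (every $g_k$ is convex, i.e.\ $\cK=\{1,\dots,K\}$), all of Assumption C is now in force, and Theorem \ref{thm:convergence3} applies verbatim: every limit point of the iterates is a stationary solution of \eqref{eq:sharing:admm}, deterministically under the essentially cyclic rule and almost surely under the randomized rule (and, if in addition each $X_k$ is compact, part~(3) yields convergence of the whole sequence to the stationary set).

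Finally I would upgrade ``stationary'' to ``optimal''. With $\cK=\{1,\dots,K\}$, the conditions of Theorem \ref{thm:convergence3}(2) read $x_k^*\in\arg\min_{x_k\in X_k} g_k(x_k)-\langle y^*,A_k x_k\rangle$ for every $k$, together with $\nabla\ell(x_0^*)+y^*=0$ and $\sum_k A_k x_k^*=x_0^*$. These are exactly the KKT conditions for the convex program \eqref{eq:sharing:admm}, whose constraints are affine so that a constraint qualification holds automatically; by convexity of the objective the KKT system is sufficient for global optimality, with $y^*$ an optimal multiplier. Hence each limit point $(\{x_k^*\},x_0^*,y^*)$ is a primal--dual optimal solution. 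I expect the only genuinely delicate point to be the claim $\gamma(\rho)\ge\rho$ for possibly nondifferentiable convex $\ell$, which is why I would justify it through the ``strongly convex plus convex'' composition rather than a second-derivative computation; everything else is bookkeeping inherited directly from Theorem \ref{thm:convergence3}.
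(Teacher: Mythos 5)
Your proposal is correct and follows exactly the route the paper intends: the paper states this corollary without an explicit proof, as a direct specialization of Theorem \ref{thm:convergence3}, and your two-step argument (convexity plus full column rank of $A_k$ gives $\gamma_k(\rho)\ge\rho\lambda_{\min}(A_k^TA_k)>0$ and $\gamma(\rho)\ge\rho$, so C2' implies C2; then the stationarity conditions of Theorem \ref{thm:convergence3}(2) become the KKT system of a convex program with affine constraints, hence sufficient for global primal--dual optimality) is precisely the reasoning the paper leaves implicit. Your added caveat that sequence convergence to the solution set (part 3) still requires compactness of the $X_k$'s is also an accurate reading of what the corollary can actually deliver.
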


%It is worth noting that at this point it is not clear whether proximal-type algorithms (i.e., those similar to Algorithm 3--4) work for the sharing problem \eqref{eq:sharing:admm}. The main difficulty is to find a proper proximal function for the variables $x_k$'s that can still guarantee the bounded

Similar to the consensus problem, one can extend Algorithm 4 to its proximal version. Here the benefit offered by the proximal-type algorithms is twofold: {\it i}) one can remove the strong convexity requirement posed in Assumption C2-(1) ; {\it ii)}  one can allow inexact and simple update for each block variable. However, the analysis is a bit more involved, as the penalty parameter $\rho$ as well as the proximal coefficient for each subproblem needs to be carefully bounded. Due to the fact that the analysis follows almost identical steps as those in Section \ref{sub:consensus_proximal}, we will not present them here.

%\begin{enumerate}

%\item The analysis framework provided above also works for a different reformulation of the sharing problem \eqref{eq:sharing}, expressed below (also see \cite[Section 7.3]{BoydADMM}, or \cite{Wang13})
%\begin{align}
%\begin{split}\label{eq:sharing:admm2}
%\min&\quad \sum_{k=1}^{K}g_k(x_k)+\ell\left(z\right)\\
%\st &\quad \frac{z}{K}+z_k = A_k x_k,\quad  x_k\in X_k,\; \forall~k=1,\cdots, K, \\
%&\quad \sum_{k=1}^{K}z_k = 0.
%\end{split}
%\end{align}
%This is a two block formulation where $\{z_k\}_{k=1}^{K}$ is the first block and  $(\{x_k\}_{k=1}^{K},z)$ is the second block. Most of the above analysis steps carry over, so again we choose not to provide details for this reformulation.
%\end{enumerate}

\section{Extensions}\label{sec:main_approach}
In this paper, we analyze the behavior of the ADMM method in the absence of convexity. We show that when the penalty parameter is chosen sufficiently large, the ADMM and several of its variants converge to the set of stationary solutions for certain consensus and sharing problems.

{Our analysis is based on using the augmented Lagrangian as a potential function to guide the iterate convergence. This approach may be extended to other nonconvex problems. In particular, if the following set of sufficient conditions (see Assumption D below) are satisfied, then the convergence of the ADMM is guaranteed for the nonconvex problem \eqref{problem:Original}. {It is important to note that in practice these conditions should be verified case by case for different applications, just like what we have done for the consensus and sharing problems. }

\pn {\bf Assumption D}
\begin{itemize}
\item[D1.] The iterations are well defined, meaning the function $L(x^t; y^t)$ is uniformly lower bounded for all $t$.
\item[D2.] There exists a constant $\sigma>0$ such that $\|y^{t+1}-y^{t}\|^2\le \sigma\|x^{t+1}-x^{t}\|^2$, for all $t$.
%\item[A3.] $X_K=\mathbb{R}^{N_K}$, that is, the $K$-th subproblem is unconstrained.
\item[D3.] $g_k(\cdot)$ is either smooth nonconvex or nonsmooth convex. The coupling function $\ell(\cdot)$ is smooth with Lipschitz continuous gradient $L$. Moreover, $\ell(\cdot)$ is convex with respect to each block variable $x_k$, but is not necessarily jointly convex with $x$. $X_k$ is a closed convex set. Problem \eqref{problem:Original} is feasible, that is, $\{x\mid Ax=q\}\bigcap_{k=1}^{K} {\rm relint} X_k\ne \emptyset$.
\item[D4.] The penalty parameter $\rho$ is chosen large enough such that each subproblem is strongly convex with modulus $\gamma_k(\rho)$, which is a nondecreasing function of $\rho$. Further, $\rho\gamma_k(\rho)>2\sigma$ for all $k$.
\end{itemize}
%{ A1. says the augmented Lagrangian is lower bounded for all iterations, and A2. says that the size of the successive difference of the dual variables can be bounded above by that of the primal variables. Clearly both these assumptions are related to properties of the iterates, therefore they need to be verified when applying the ADMM for a specific problem. A3. gives a set of mild assumptions on the problem. A4. appears to be related to the choice of the stepsize $\rho$, but it also restricts the type of nonconvex function $g_k(\cdot)$ to have bounded Hessian in its effective domain, and that it should be strongly convex in the null space of $A_k$. Otherwise it is not possible to increase $\rho$ to obtain the desired per-block strong convexity.}

Following a similar argument leading to Theorem \ref{thm:convergence1}, we can show that as long as Assumption D is satisfied, then the primal feasibility gap $\|q-\sum_{k=1}^{K}A_k x^{t+1}_k\|$ goes to zero in the limit, and that every limit point of the sequence $\{\{x^{t+1}_k\}, x_0^{t+1}, y^{t+1}\}$ is a stationary solution of problem \eqref{problem:Original}. {A few remarks on Assumption D are in order:
\begin{enumerate}
\item Assumption D1 is necessary for showing convergence. Without D1, even if one is able to show that the augmented Lagrangian is decreasing, one cannot claim the convergence to stationary solutions. The reason is that the augmented Lagrangian may go to $-\infty$ \footnote{In fact, it is very easy to modify the algorithm so that the  augmented Lagrangian reduces at each iteration -- just change the ``+" in the dual update \eqref{eq:y_update} to ``-". However, it is obvious that by doing this the dual variables will become unbounded, and the primal feasibility will never be satisfied. }, therefore there is no way to guarantee that the successive difference of the iterates goes to $0$, or the primal feasibility is satisfied in the limit.

\item The main drawback of Assumption D is that it is made on the iterates rather than on the problem. For different linearly constrained optimization problems, one still needs to verify that these conditions are indeed valid, as we have done for the consensus and the sharing problem considered in this paper.

\end{enumerate}
}

{Here we mention one more  family of problems for which Assumption D can be verified. Consider
\begin{align}
\min&\quad f(x_1) + g(x_2)\nonumber\\
\st&\quad Bx_1 + Ax_2=c, \; x_1\in X,
\end{align}
where $f(\cdot)$ is a convex possibly nonsmooth function; $g(\cdot)$ is a possibly nonconvex function, and has Lipschitzian gradient with modulus $L_g$; $X\subseteq R^{N}$; $A$ is an invertible matrix; $g(\cdot)$ and $f(\cdot)$ are lower bounded over the set $X$. Consider the following ADMM method, where the iterate generated at iteration $t+1$ is given by
\begin{align}
x^{t+1}_1 & = \arg\min_{x_1\in X}\; f(x_1) +\langle B x_1 + A x^t_2-c, y^t\rangle+\frac{\rho}{2}\|B x_1 + A x^t_2-c\|^2 \nonumber\\
x^{t+1}_2 & = \arg\min \; g(x_2) +\langle B x^{t+1}_1 + A x_2-c, y^t\rangle+\frac{\rho}{2}\|B x^{t+1}_1 + A x_2-c\|^2 \nonumber\\
y^{t+1} &= y^{t}+\rho\left(B x^{t+1}_1 + A x^{t+1}_2-c\right)\nonumber.
\end{align}
{By using steps in Lemma 2.1-Lemma 2.3, one can verify that if $\rho>{L_g}/{\lambda_{\min}(AA^T)}$, then Assumptions D1 holds true. By having $\rho$ large enough and by using the invertibility of $A$, we can make the $x_2$ subproblem strongly convex, then Assumption D4 holds true.  Other assumptions can be verified along similar lines. Note that in this case the convergence can be obtained with a slightly weaker condition in which the $x_1$ subproblem is convex but not necessarily strongly convex.}

}

\newpage
\bibliographystyle{unsrt}

\bibliography{ref,biblio}

\end{document}